\documentclass[11pt, letterpaper, oneside,reqno]{amsart}

\headheight=8pt     \topmargin=0pt \textheight=624pt
\textwidth=432pt \oddsidemargin=18pt \evensidemargin=18pt

\usepackage{latexsym, amsmath, amssymb, amsfonts, amscd,bm}
\usepackage{mathtools}
\usepackage{amsthm}
\usepackage{t1enc}
\usepackage[mathscr]{eucal}
\usepackage{indentfirst}
\usepackage{graphicx, pb-diagram}
\usepackage{fancyhdr}
\usepackage{fancybox}
\usepackage{enumerate}
\usepackage{xcolor}
\usepackage[all]{xy}
\usepackage{hyperref}
\usepackage{tikz-cd}
\usepackage{blkarray}
\usepackage{url}
\usepackage{float}
\usepackage{bm}
\usepackage[makeroom]{cancel}
\usepackage{accents}

\theoremstyle{plain}
\newtheorem{thm}{Theorem}[section]

\newtheorem*{pbl}{Problem}
\newtheorem{prop}[thm]{Proposition}

\newtheorem{lemma}[thm]{Lemma}
\newtheorem{cor}[thm]{Corollary}

\newtheoremstyle{underline}
{}        
{}              
{}              
{}    
{\large}              
{:}             
{1mm}         
{{\underline{\thmname{#1}\thmnumber{ #2}}}}  

\theoremstyle{underline}
\newtheorem*{claim*}{Claim}

\theoremstyle{definition}
\newtheorem{defi}[thm]{Definition}

\theoremstyle{remark}
\newtheorem{remark}[thm]{Remark}
\newtheorem{ex}[thm]{Example}

\newtheorem*{ack}{Acknowledgements}

\newcommand*{\dt}[1]{\accentset{\mbox{\large\bfseries .}}{#1}}




\definecolor{forest}{rgb}{0,0.5,0}


\begin{document}
	
	\title{Morse-Novikov cohomology and rigidity of Lie affine foliations}
	\author{Stephane Geudens}
	\address{{\scriptsize Institute of Mathematics, Polish Academy of Sciences,  ul. Sniadeckich 8, 00-656 Warsaw, Poland}}
	\email{stephane\_geudens@hotmail.com}
	
	\begin{abstract}  
		In \cite{Lie}, a cohomological criterion is given for a Lie $\mathfrak{g}$-foliation on a compact manifold to be rigid among nearby Lie foliations. Our aim is to look for examples of this rigidity statement in case the Lie foliation is modeled on the two-dimensional non-abelian Lie algebra $\mathfrak{g}=\mathfrak{aff}(1)$. We study the relevant cohomology group in detail, showing that it can be expressed in terms of Morse-Novikov cohomology. We find the precise conditions under which it vanishes, which yields many examples of rigid Lie affine foliations. In particular, we show that any Lie affine foliation on a compact, connected, orientable manifold of dimension $3$ or $4$ is rigid when deformed as a Lie foliation. Our results rely on a computation of the Morse-Novikov cohomology groups associated with a nowhere-vanishing closed one-form with discrete period group, which may be of independent interest.
	\end{abstract}
	
	\maketitle
	
	\setcounter{tocdepth}{1} 
	\tableofcontents
	
	\section*{Introduction}
	
	Given a manifold $M$, a Lie foliation modeled on a Lie algebra $\mathfrak{g}$ is given by the kernel of a non-singular Maurer-Cartan form $\omega\in\Omega^{1}(M,\mathfrak{g})$. Lie foliations are a particularly important class of Riemannian foliations, since Molino's structure theory \cite{Molino} reduces the study of Riemannian foliations on compact manifolds to that of Lie foliations.	This paper is motivated by the work \cite{Lie}, which studies deformations of a Lie foliation $(\mathcal{F},\omega)$ within the class of Lie foliations on $M$. The precise setup of this deformation problem will be recalled in Def.~\ref{def:Lie-fol}. We would like to stress that one deforms the defining Maurer-Cartan form $\omega$, rather than just the underlying foliation $\mathcal{F}$.
	One of the main results in \cite{Lie} is a rigidity theorem for Lie foliations on compact manifolds, which states that $(\mathcal{F},\omega)$ is rigid among nearby Lie foliations provided that a suitable cohomology group $H^{1}(\mathcal{A})$ vanishes. Our aim is to look for situations in which this result applies. Since the authors of \cite{Lie} study the cohomology $H^1(\mathcal{A})$ in detail for Lie foliations modeled on abelian Lie algebras $\mathfrak{g}$, we will focus on the case in which $\mathfrak{g}=\mathfrak{aff}(1)$ is the simplest non-abelian Lie algebra. We refer to such Lie foliations as \emph{Lie affine foliations}. They are defined by a pair of one-forms $\omega_1,\omega_2\in\Omega^{1}(M)$ that are linearly independent everywhere and satisfy
	\begin{equation}\label{eq:defining-equations}
		\begin{cases}
			d\omega_1-\omega_2\wedge\omega_1=0\\
			d\omega_2=0
		\end{cases}.
	\end{equation}	
	From these equations, it is apparent that the study of Lie affine foliations involves a classical cohomology theory called \emph{Morse-Novikov cohomology}, which is defined as follows. A choice of closed one-form $\theta\in\Omega^{1}(M)$ gives a differential
	\[
	d_{\theta}:\Omega^{\bullet}(M)\rightarrow\Omega^{\bullet+1}(M):\alpha\mapsto d\alpha-\theta\wedge\alpha,
	\]
	which is defined by twisting the de Rham differential with $\theta$. The resulting cohomology groups $H^{\bullet}_{\theta}(M)$ are called the Morse-Novikov cohomology groups associated with $\theta$. This cohomology theory goes back to the work of Lichnerowicz \cite{Lichnerowicz} and Novikov \cite{Novikov} and has been studied intensively because of its relevance to locally conformal symplectic geometry \cite{Haller}, \cite{Ornea} and Morse theory for closed one-forms \cite{Pajitnov}. Notable results that have been obtained to date include vanishing theorems for $H^{\bullet}_{\theta}(M)$ under conditions of Riemannian nature \cite{Leon}, \cite{chen} and computations of $H^{\bullet}_{\theta}(M)$ in particular classes of examples \cite{Otiman}, \cite{Istrati}, \cite{Banyaga}.
	
	\bigskip
	
	Section~\ref{sec:one} is devoted to a result concerning Morse-Novikov cohomology that may be of independent interest. Towards studying rigidity of Lie affine foliations, we will compute the Morse-Novikov cohomology groups $H^{\bullet}_{\theta}(M)$ when the manifold $M$ is compact and connected and the closed one-form $\theta$ is nowhere-vanishing with discrete period group. Our approach relies on the fact that there is a diffeomorphism between $M$ and a suitable mapping torus $\mathbb{T}_{\varphi}$, which takes the foliation $\mathcal{F}$ on $M$ defined by the kernel of $\theta$ to the foliation on $\mathbb{T}_{\varphi}$ by fibers of $\mathbb{T}_{\varphi}\rightarrow S^{1}$. As the Morse-Novikov cohomology groups $H^{\bullet}_{\theta}(M)$ only depend on the de Rham cohomology class of $\theta$, this reduces the computation of $H^{\bullet}_{\theta}(M)$ to that of $H^{\bullet}_{cdt}(\mathbb{T}_{\varphi})$. Here $c\in\mathbb{R}_{0}$ is a non-zero constant and $t$ is the angle coordinate on $S^{1}$. We then prove the following result (see Cor.~\ref{cor:computed}). Below, we use the notation
	\[
	K^{\bullet}_c:=\ker\left([\varphi^{*}-e^{-c}\mathrm{Id}]:H^{\bullet}(L)\rightarrow H^{\bullet}(L)\right)
	\]
	and
	\[
	C^{\bullet}_c:=\mathrm{coker}\left([\varphi^{*}-e^{-c}\mathrm{Id}]:H^{\bullet}(L)\rightarrow H^{\bullet}(L)\right).
	\]
	
	\vspace{0.2cm}
	\noindent
	\textbf{Proposition A.}	
	\emph{
		The Morse-Novikov cohomology $H^{\bullet}_{cdt}(\mathbb{T}_{\varphi})$ for $c\neq 0$ is given as follows.
		\begin{enumerate}[i)]
			\item $H^{0}_{cdt}(\mathbb{T}_{\varphi})=0$,
			\item For $k\geq 1$, we have a short exact sequence
			\begin{align*}
				0\longrightarrow C_{c}^{k-1}\longrightarrow H^{k}_{cdt}(\mathbb{T}_{\varphi})\longrightarrow K_{c}^{k}\longrightarrow 0.
			\end{align*}
		\end{enumerate}
	}
	
	\vspace{0.2cm}
	\noindent
	This result unifies some computations for particular examples that have appeared in the literature \cite{Otiman}, \cite{Moroianu}. The latter rely on commonly used tools like the Mayer-Vietoris sequence. The novelty of our result lies in the fact that we use a geometric approach based on foliation theory. Indeed, in Cor.~\ref{cor:sec} we relate the Morse-Novikov cohomology $H^{\bullet}_{\theta}(M)$ for a nowhere-vanishing closed one-form $\theta\in\Omega^{1}(M)$ with the foliated cohomology $H^{\bullet}(\mathcal{F})$ of the foliation $\mathcal{F}$ defined by $\theta$. This result can be used to compute the Morse-Novikov cohomology $H^{\bullet}_{\theta}(M)$ whenever the foliated cohomology $H^{\bullet}(\mathcal{F})$ can be calculated. It applies in particular when the foliation $\mathcal{F}$ is given by the fibers of the mapping torus $\mathbb{T}_{\varphi}\rightarrow S^{1}$, which yields Proposition~A.
	
	\bigskip
	Section~\ref{sec:deformations} is devoted to rigidity of Lie affine foliations. We start by expressing the cohomology group $H^{1}(\mathcal{A})$ in more classical terms. The equations \eqref{eq:defining-equations} say that the one-form $\omega_2\in\Omega^{1}(M)$ is closed and that $\omega_1\in\Omega^{1}(M)$ is closed for the Morse-Novikov differential $d_{\omega_2}$. It follows that we get a cochain map 
	\[
	\Phi:\big(\Omega^{\bullet}(M),d\big)\rightarrow\big(\Omega^{\bullet+1}(M),d_{\omega_2}\big):\alpha\mapsto\alpha\wedge\omega_1.
	\]
	We then show that the cohomology group $H^{1}(\mathcal{A})$ is isomorphic to the cohomology $H^{1}(\mathcal{C}(\Phi))$ of the mapping cone $\mathcal{C}(\Phi)$. This can be used to deduce precise criteria for the vanishing of $H^{1}(\mathcal{A})$, which yields the following rigidity result (see Cor.~\ref{cor:req-rigidity}).

	\vspace{0.3cm}
	\noindent
	\textbf{Proposition B.}
	\emph{
		Let $M$ be compact and connected, with a Lie $\mathfrak{aff}(1)$-foliation $(\mathcal{F},\omega)$ defined by one-forms $\omega_1,\omega_2\in\Omega^{1}(M)$ satisfying
		\begin{equation*}
			\begin{cases}
				d\omega_1-\omega_2\wedge\omega_1=0\\
				d\omega_2=0
			\end{cases}.
		\end{equation*}
		If $H^{1}_{\omega_2}(M)=\mathbb{R}[\omega_1]$ and $\bullet\wedge[\omega_1]:H^{1}(M)\rightarrow H^{2}_{\omega_2}(M)$ is injective, then $(\mathcal{F},\omega)$ is rigid.
	}
	
	\vspace{0.3cm}
	\noindent
	To obtain examples of this result, we restrict to a particular class of Lie affine foliations on mapping tori which we call \emph{model Lie affine foliations}. They are constructed as follows. Let $L$ be a compact, connected manifold and $\varphi\in\text{Diff}(L)$ a diffeomorphism. Pick a closed nowhere-vanishing one-form $\alpha\in\Omega^{1}(L)$ satisfying $\varphi^{*}\alpha=e^{\lambda}\alpha$ for some $\lambda\neq 0$. We get a Lie affine foliation $(\mathcal{G}_{\alpha,\lambda},\omega)$ on the mapping torus $\mathbb{T}_{\varphi}$, defined by the one-forms
	\begin{equation}\label{eq:defining-forms}
		\begin{cases}
			\omega_1:=e^{-\lambda t}\alpha\\
			\omega_2=-\lambda dt
		\end{cases}.
	\end{equation}
	We use Proposition A to analyze the cohomological requirements in Proposition B for model Lie affine foliations. We obtain sufficient conditions under which these requirements are met. This gives a concrete way to construct examples of rigid Lie affine foliations (see Cor.~\ref{cor:rigidity-model}).
	
	\vspace{0.3cm}
	\noindent
	\textbf{Corollary C.}
	\emph{ A model Lie affine foliation $(\mathcal{G}_{\alpha,\lambda},\omega)$ is rigid under the following conditions:
		\begin{enumerate}
			\item The eigenvalue $e^{\lambda}$ of $[\varphi^{*}]:H^{1}(L)\rightarrow H^{1}(L)$ has algebraic multiplicity equal to $1$,
			\item The map $\bullet\wedge[\alpha]:\ker\left([\varphi^{*}-\mathrm{Id}]:H^{1}(L)\rightarrow H^{1}(L)\right)\rightarrow H^{2}(L)$ is injective.
		\end{enumerate}
	}
	
	\vspace{0.1cm}
	\noindent
	Model Lie affine foliations are of particular importance because any Lie affine foliation on a compact, connected, orientable manifold of dimension $3$ or $4$ is foliated diffeomorphic to a foliated manifold $(\mathbb{T}_{\varphi},\mathcal{G}_{\alpha,\lambda})$ of this type \cite{caron},\cite{Matsumoto}. Making use of these normal forms, Corollary C allows us to prove the following (see Cor.~\ref{cor:rigid3} and Cor.~\ref{cor:rigid4}).
	
	\vspace{0.3cm}
	\noindent
	\textbf{Corollary D.}
	\emph{
		Let $M$ be a compact, connected and orientable $3$- or $4$-manifold.  Any Lie affine foliation $(\mathcal{F},\eta)$ on $M$ is rigid, when deformed as a Lie foliation.
	}
	
	\vspace{0.3cm}
	\noindent
	To prove Corollary D, a certain technical difficulty needs to be overcome. By assumption, the Lie affine foliation $\mathcal{F}$ on $M$ is defined by one-forms $\eta_1,\eta_2\in\Omega^{1}(M)$. The existence of a foliated diffeomorphism $\phi:(\mathbb{T}_{\varphi},\mathcal{G}_{\alpha,\lambda})\rightarrow (M,\mathcal{F})$ does not imply that $\phi^{*}\eta_1$ and $\phi^{*}\eta_2$ agree with $\omega_1,\omega_2$ defined in \eqref{eq:defining-forms} -- these pairs of one-forms merely define the same foliation $\mathcal{G}_{\alpha,\lambda}$ on $\mathbb{T}_{\varphi}$. However, we prove that for model Lie affine foliations $(\mathcal{G}_{\alpha,\lambda},\omega)$, the cohomological requirements of Proposition B are in fact independent of the defining Maurer-Cartan form $\omega$ (see Prop.~\ref{prop:other-forms} and Cor.~\ref{cor:equiv-rigidity}). That is to say, they only depend on the underlying foliation $\mathcal{G}_{\alpha,\lambda}$. This is why the mere existence of a foliated diffeomorphism $\phi:(\mathbb{T}_{\varphi},\mathcal{G}_{\alpha,\lambda})\rightarrow (M,\mathcal{F})$ allows us to conclude that the Maurer-Cartan form $\eta$ defining $\mathcal{F}$ is rigid.

	\begin{ack}	
		During the preparation of this article, the author received support from the UCL Institute for Mathematical and Statistical Sciences (IMSS) and from the Institute of Mathematics of the Polish Academy of Sciences (IMPAN). He would like to thank the referee for useful comments and suggestions which improved the quality of this article.
	\end{ack}

	\section{Results on Morse-Novikov cohomology}\label{sec:one}
	
	In this section, we compute the Morse-Novikov cohomology groups of a compact, connected manifold associated with a nowhere-vanishing closed one-form whose group of periods is discrete. 
	Our motivation is to study the deformation theory of Lie affine foliations, which relies heavily on these cohomology groups as we will see in \S\ref{sec:deformations}.

	\subsection{Preliminaries}
	We recall the definition of the Morse-Novikov cohomology groups and some basic properties that will be useful in the sequel.
	
	\begin{defi}
		Let $M$ be a manifold and $\theta\in\Omega^{1}(M)$ a closed one-form. Twisting the de Rham differential $d$ by $\theta$ gives a new differential $d_{\theta}$, defined by
		\[
		d_{\theta}:\Omega^{k}(M)\rightarrow\Omega^{k+1}(M):\alpha\mapsto d\alpha-\theta\wedge\alpha.
		\]
		The associated cohomology groups $H^{\bullet}_{\theta}(M)$ are the Morse-Novikov cohomology groups.
	\end{defi}
	
	The following properties are well-known and can be found for instance in \cite{Haller}.
	
	\begin{prop}\label{prop:prelim}
		$i)$ If $\theta'=\theta+df$ for some $f\in C^{\infty}(M)$, then we get a cochain isomorphism
		\[
		\big(\Omega^{\bullet}(M),d_{\theta'}\big)\rightarrow\big(\Omega^{\bullet}(M),d_{\theta}\big):\alpha\mapsto e^{-f}\alpha.
		\]
		Hence, the cohomology $H^{\bullet}_{\theta}(M)$ only depends on the de Rham cohomology class $[\theta]\in H^{1}(M)$.
		
		\vspace{0.1cm}
		$ii)$ If $M$ is connected and $\theta$ is not exact, then $H^{0}_{\theta}(M)$ vanishes.
	\end{prop}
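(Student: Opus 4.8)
The plan is to prove the two parts separately, both by direct computation.

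For part $i)$, I would verify that the prescribed map intertwines the two twisted differentials. Setting $\Psi_f(\alpha):=e^{-f}\alpha$, the Leibniz rule gives $d(e^{-f}\alpha)=e^{-f}(d\alpha-df\wedge\alpha)$, whence
\[
d_{\theta}\big(\Psi_f(\alpha)\big)=e^{-f}\big(d\alpha-df\wedge\alpha\big)-\theta\wedge e^{-f}\alpha=e^{-f}\big(d\alpha-(\theta+df)\wedge\alpha\big)=\Psi_f\big(d_{\theta'}\alpha\big).
\]
Since $\Psi_f$ is a linear isomorphism with inverse $\alpha\mapsto e^{f}\alpha$, it is a cochain isomorphism, hence induces isomorphisms $H^{\bullet}_{\theta'}(M)\cong H^{\bullet}_{\theta}(M)$. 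The final assertion then follows because any two closed one-forms in the same de Rham class differ by $df$ for some $f\in C^{\infty}(M)$.

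For part $ii)$, note that $H^{0}_{\theta}(M)=\ker\big(d_{\theta}\colon C^{\infty}(M)\to\Omega^{1}(M)\big)$ and that $d_{\theta}g=dg-g\,\theta$ for $g\in C^{\infty}(M)$. So I must show that if $g$ is smooth with $dg=g\,\theta$ and $\theta$ is not exact, then $g\equiv 0$. The key point is that $Z:=g^{-1}(0)$ is both closed and open: closedness is clear, and for openness one observes that if $g(p)=0$ then along any smooth path $\gamma$ starting at $p$ the function $h:=g\circ\gamma$ solves the linear ODE $h'(t)=h(t)\,\langle\theta,\dot\gamma(t)\rangle$ with $h(0)=0$, so $h\equiv 0$ by uniqueness of solutions; since this holds along every path from $p$, a neighbourhood of $p$ lies in $Z$. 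As $M$ is connected, either $Z=M$, giving $g\equiv 0$, or $g$ is nowhere-vanishing. In the latter case $g$ has constant sign on the connected manifold $M$, and replacing $g$ by $-g$ if necessary (which still satisfies the same equation) we may assume $g>0$; then $\theta=dg/g=d(\log g)$ is exact, contradicting the hypothesis. Hence $g\equiv 0$ and $H^{0}_{\theta}(M)=0$.

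Both parts are routine; the only subtlety is in part $ii)$, namely ruling out a nonzero $g$ that vanishes somewhere. I would dispatch this with the ODE-uniqueness argument above rather than invoking a heavier unique-continuation theorem, since the restriction of the equation to a curve is a genuine first-order linear scalar ODE.
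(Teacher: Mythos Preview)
Your proof is correct. Both computations are clean: in part $i)$ the Leibniz-rule verification that $\Psi_f$ intertwines $d_{\theta'}$ and $d_{\theta}$ is exactly right, and in part $ii)$ the clopen argument for $Z=g^{-1}(0)$ via the scalar linear ODE along curves is a standard and efficient way to handle the possible vanishing of $g$.

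As for comparison with the paper: the paper does not actually prove this proposition. It records the statement as well-known and refers the reader to \cite{Haller} for a proof. So you have supplied a self-contained argument where the paper simply cites the literature. Your approach for $ii)$ is the usual one; an essentially equivalent variant would be to work on the universal cover $\widetilde{M}$, where $\theta$ becomes exact, say $\theta=dF$, and observe that $d(e^{-F}g)=0$ forces $g=Ce^{F}$, which descends to $M$ only if $C=0$ (since $F$ does not). Your ODE argument achieves the same end without passing to covers.
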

	
	The following subsections \S\ref{subsec:exact-seq} and \S\ref{subsec:non-singular} are dedicated to computing the Morse-Novikov cohomology $H^{\bullet}_{\theta}(M)$ under the assumption that $M$ is compact and connected and that the closed one-form $\theta\in\Omega^{1}(M)$ is nowhere-vanishing with discrete group of periods.

	\subsection{An exact sequence}\label{subsec:exact-seq}
	Throughout this subsection, let $M$ be any manifold endowed with a nowhere-vanishing closed one-form $\theta\in\Omega^{1}(M)$. 
	Let $\mathcal{F}$ be the foliation whose tangent distribution is the kernel of $\theta$. In the following, we will describe the Morse-Novikov cohomology $H^{\bullet}_{\theta}(M)$ in terms of data attached to the foliation $\mathcal{F}$.
	We will denote by $\big(\Omega^{\bullet}(\mathcal{F}),d_{\mathcal{F}}\big)$ the foliated de Rham complex, consisting of leafwise differential forms $\Omega^{\bullet}(\mathcal{F}):=\Gamma(\wedge^{\bullet}T^{*}\mathcal{F})$ endowed with the leafwise de Rham differential $d_{\mathcal{F}}$. The latter is defined by the usual Koszul formula, i.e. for $\alpha\in\Omega^{k}(\mathcal{F})$ and $V_0,\ldots,V_k\in\Gamma(T\mathcal{F})$ we have
	\begin{align*}
		d_{\mathcal{F}}\alpha(V_0,\ldots,V_k)&=\sum_{i=0}^{k}(-1)^{i}V_i\big(\alpha(V_0,\ldots,V_{i-1},\widehat{V_i},V_{i+1},\ldots,V_k)\big)\\
		&\hspace{0.5cm}+\sum_{i<j}(-1)^{i+j}\alpha\big([V_i,V_j],V_0,\ldots,\widehat{V_i},\ldots,\widehat{V_j},\ldots,V_k\big).
	\end{align*}
	The associated cohomology $H^{\bullet}(\mathcal{F})$ is called the foliated cohomology of $\mathcal{F}$. The next lemma relates the Morse-Novikov cohomology $H^{\bullet}_{\theta}(M)$ with the foliated cohomology $H^{\bullet}(\mathcal{F})$.
	
	\begin{lemma}
		The Morse-Novikov complex fits in a canonical short exact sequence 
		\begin{equation}\label{eq:short-exact}
		0\longrightarrow\big(\Omega^{\bullet-1}(\mathcal{F}),d_{\mathcal{F}}\big)\overset{j}{\longrightarrow}\big(\Omega^{\bullet}(M),d_{\theta}\big)\overset{r}{\longrightarrow}\big(\Omega^{\bullet}(\mathcal{F}),d_{\mathcal{F}}\big)\longrightarrow 0.
		\end{equation}
		The maps appearing above are defined as follows:
		\begin{enumerate}
			\item The map $r:\Omega^{\bullet}(M)\rightarrow \Omega^{\bullet}(\mathcal{F})$ is the restriction to the leaves of $\mathcal{F}$.
			\item The map $j:\Omega^{\bullet-1}(\mathcal{F})\rightarrow\Omega^{\bullet}(M)$ is defined by setting
			\[
			j(\alpha)=\widetilde{\alpha}\wedge\theta,
			\]
			where $\widetilde{\alpha}\in\Omega^{\bullet-1}(M)$ is any extension of $\alpha\in\Omega^{\bullet-1}(\mathcal{F})$.
		\end{enumerate}
	\end{lemma}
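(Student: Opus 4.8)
The plan is to verify that the three maps fit together into a short exact sequence of cochain complexes, which amounts to three things: that $j$ and $r$ are cochain maps, that $j$ is well-defined (independent of the chosen extension $\widetilde\alpha$), and that the sequence is exact at each of the three spots.

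First I would check that $j$ is well-defined. If $\widetilde\alpha$ and $\widetilde\alpha'$ are two extensions of $\alpha\in\Omega^{\bullet-1}(\mathcal F)$, then $\widetilde\alpha-\widetilde\alpha'$ restricts to zero on $T\mathcal F=\ker\theta$, so pointwise it lies in the ideal generated by $\theta$; hence $(\widetilde\alpha-\widetilde\alpha')\wedge\theta=0$. (One should note that $\Omega^{\bullet-1}(M)\to\Omega^{\bullet-1}(\mathcal F)$ is surjective, so extensions exist, e.g.\ by choosing a complement to $T\mathcal F$ or using a partition of unity.) Next, $r$ is clearly surjective (same extension argument) and is a cochain map essentially by naturality of the Koszul formula: restricting $d\beta$ to leaves gives $d_{\mathcal F}(r\beta)$, and the extra term $\theta\wedge\beta$ in $d_\theta\beta$ restricts to zero on the leaves since $\theta$ vanishes on $T\mathcal F$. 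That $j$ is a cochain map is the one genuine computation: writing $j(\alpha)=\widetilde\alpha\wedge\theta$ and using $d\theta=0$, we get $d_\theta(\widetilde\alpha\wedge\theta)=d(\widetilde\alpha\wedge\theta)-\theta\wedge\widetilde\alpha\wedge\theta=d\widetilde\alpha\wedge\theta$, and $d\widetilde\alpha$ is an extension of $d_{\mathcal F}\alpha$, so this equals $j(d_{\mathcal F}\alpha)$. The sign here is the point to be slightly careful about — the grading shift means one should check the convention so that $j$ commutes with the differentials without an unwanted sign, but since $\theta\wedge\theta=0$ the cross term vanishes regardless.

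For exactness: at $\Omega^{\bullet-1}(\mathcal F)$, injectivity of $j$ follows because $\widetilde\alpha\wedge\theta=0$ with $\theta$ nowhere zero forces $\widetilde\alpha$ to vanish on a complement of $T\mathcal F$, i.e.\ $r\widetilde\alpha=\widetilde\alpha|_{\mathcal F}=\alpha$ can be recovered — more directly, $\widetilde\alpha\wedge\theta=0$ everywhere means $\widetilde\alpha=\theta\wedge\gamma$ locally, which restricts to $0$ on the leaves, so $\alpha=0$. At $\Omega^{\bullet}(\mathcal F)$, surjectivity of $r$ was already noted. The main point is exactness in the middle: clearly $r\circ j=0$ since $j(\alpha)=\widetilde\alpha\wedge\theta$ has $\theta$ as a factor and hence restricts to zero on $\ker\theta$; conversely, if $\beta\in\Omega^{k}(M)$ satisfies $r\beta=0$, then $\beta$ restricts to zero on the leaves, so pointwise $\beta=\theta\wedge\mu$ for some $(k-1)$-covector $\mu$ — and choosing such $\mu$ smoothly (again using a complement of $T\mathcal F$ so that contraction with a vector field transverse to the leaves and normalized against $\theta$ produces a global $(k-1)$-form $\mu$) gives $\beta=\theta\wedge\mu=(-1)^{k-1}\mu\wedge\theta=j\big((-1)^{k-1}\mu|_{\mathcal F}\big)$, up to sign. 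I expect this last step to be the main obstacle, not because it is deep but because it requires producing the form $\mu$ globally and smoothly and tracking the sign so the identification with $j$ is clean; the standard fix is to fix once and for all a vector field $Z$ with $\theta(Z)=1$ and set $\mu=\iota_Z\beta$, then check $\theta\wedge\iota_Z\beta=\beta$ whenever $\iota_{T\mathcal F}\beta=0$ by evaluating both sides on a frame adapted to the splitting $TM=T\mathcal F\oplus\mathbb R Z$.
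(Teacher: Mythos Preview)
Your proposal is correct and follows essentially the same approach as the paper: verify well-definedness of $j$, check that $j$ and $r$ are cochain maps via the same computations, and establish exactness at each spot using that forms vanishing on $T\mathcal{F}=\ker\theta$ are precisely those factoring through $\theta$. The only cosmetic difference is that you spell out the middle exactness by explicitly producing a preimage via $\mu=\iota_Z\beta$ for a vector field $Z$ with $\theta(Z)=1$, whereas the paper simply asserts that the image of $j$ consists of the forms factoring through $\theta$; your sign worries are harmless since any sign is absorbed into the leafwise form $\mu|_{\mathcal{F}}$.
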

	\begin{proof}
		First, it is clear that the restriction $r:\big(\Omega^{\bullet}(M),d\big)\rightarrow\big(\Omega^{\bullet}(\mathcal{F}),d_{\mathcal{F}}\big)$ is a surjective cochain map. The same then holds when replacing $\big(\Omega^{\bullet}(M),d\big)$ by $\big(\Omega^{\bullet}(M),d_{\theta}\big)$, as $r(\theta)=0$.
		
		Next, note that the map $j$ is well-defined. Indeed, if $\widetilde{\alpha},\widetilde{\alpha}'\in\Omega^{k-1}(M)$ are two extensions of $\alpha\in\Omega^{k-1}(\mathcal{F})$ then the difference $\widetilde{\alpha}'-\widetilde{\alpha}$ vanishes along the leaves of $\mathcal{F}$, hence it factors through $\theta$. Consequently, the wedge product $(\widetilde{\alpha}'-\widetilde{\alpha})\wedge\theta$ vanishes. Moreover, the map $j$ is injective since vanishing of $\widetilde{\alpha}\wedge\theta$ implies that the restriction of $\widetilde{\alpha}$ to the leaves of $\mathcal{F}$ vanishes. To see that $j$ is a cochain map, we compute
		\begin{align*}
			d_{\theta}(j(\alpha))&=d(\widetilde{\alpha}\wedge\theta)-\theta\wedge\widetilde{\alpha}\wedge\theta\\
			&=d\widetilde{\alpha}\wedge\theta\\
			&=\widetilde{d_{\mathcal{F}}\alpha}\wedge\theta\\
			&=j(d_{\mathcal{F}}\alpha).
		\end{align*}
		In the third equality, we used that $d\widetilde{\alpha}$ and $\widetilde{d_{\mathcal{F}}\alpha}$ have the same restriction to the leaves of $\mathcal{F}$, namely $d_{\mathcal{F}}\alpha$. This confirms that $j$ is an injective cochain map. 
		
		At last, the image of $j$ consists of the differential forms that factor through $\theta$. These are exactly the forms that restrict to zero along the leaves of $\mathcal{F}$, i.e. the kernel of $r$.
	\end{proof}
	
	Consequently, we get a long exact sequence in cohomology
	\begin{equation}\label{eq:long-ex}
		\cdots\rightarrow H^{k-1}(\mathcal{F})\overset{[j]}{\rightarrow} H^{k}_{\theta}(M)\overset{[r]}{\rightarrow} H^{k}(\mathcal{F})\overset{\partial}{\rightarrow} H^{k}(\mathcal{F})\overset{[j]}{\rightarrow} H^{k+1}_{\theta}(M)\overset{[r]}{\rightarrow} H^{k+1}(\mathcal{F})\rightarrow\cdots
	\end{equation}

	To describe the connecting homomorphism $\partial$ explicitly, we will fix a suitable vector field $V\in\mathfrak{X}(M)$ that is $\mathcal{F}$-projectable. Let us first recall the definition of this notion.
	
	\begin{defi}\label{def:projectable}
	A vector field $V\in\mathfrak{X}(M)$ is called $\mathcal{F}$-projectable if $[V,\Gamma(T\mathcal{F})]\subset\Gamma(T\mathcal{F})$.
	\end{defi}
	
	These are exactly the vector fields $V\in\mathfrak{X}(M)$ which locally project to the leaf space $M/\mathcal{F}$. Equivalently, the flow of $V$ takes leaves to leaves. Consequently, there is a well-defined Lie derivative $\pounds_V:\Omega^{\bullet}(\mathcal{F})\rightarrow \Omega^{\bullet}(\mathcal{F})$ if $V$ is $\mathcal{F}$-projectable. We spell this out in the next remark.
	
	\begin{remark}\label{rem:Lie-der}
	Let $V\in\mathfrak{X}(M)$ be an $\mathcal{F}$-projectable vector field. For $\alpha\in\Omega^{k}(\mathcal{F})$, we can define the Lie derivative $\pounds_{V}\alpha$ by picking any extension $\widetilde{\alpha}\in\Omega^{k}(M)$ of $\alpha$ and setting
	\begin{equation}\label{eq:Lie-derivative}
	\pounds_{V}\alpha:=r\big(\pounds_{V}\widetilde{\alpha}\big).
	\end{equation}
	To show that this does not depend on the choice of extension, we check that if $\beta\in\Omega^{k}(M)$ vanishes when restricted to $\mathcal{F}$, then the same holds for $\pounds_{V}\beta$. This is indeed the case, since for $X_1,\ldots,X_{k}\in\Gamma(T\mathcal{F})$ we have that
	\[
	\big(\pounds_{V}\beta\big)(X_1,\ldots,X_k)=V\big(\beta(X_1,\ldots,X_k)\big)-\sum_{i=1}^{k}\beta\big(X_1,\ldots,X_{i-1},[V,X_i],X_{i+1},\ldots,X_k\big).
	\] 
	Here we make crucial use of the fact that $[V,X_i]\in\Gamma(T\mathcal{F})$, because $V$ is $\mathcal{F}$-projectable.
	\end{remark}
	
	Returning to the long exact sequence \eqref{eq:long-ex}, fix a vector field $V\in\mathfrak{X}(M)$ satisfying $\theta(V)=1$. Note that $V$ is an $\mathcal{F}$-projectable vector field, since for $X\in\Gamma(T\mathcal{F})$ we have that
	\begin{align*}
	0=(d\theta)(V,X)
	=V\big(\theta(X)\big)-X\big(\theta(V)\big)-\theta\big([V,X]\big)
	=-\theta\big([V,X]\big).
	\end{align*}
	Hence, we have a well-defined Lie derivative $\pounds_V:\Omega^{\bullet}(\mathcal{F})\rightarrow \Omega^{\bullet}(\mathcal{F})$ defined by equation \eqref{eq:Lie-derivative}. It shows up in the following result, which describes the connecting homomorphism $\partial$ in \eqref{eq:long-ex}.

	\begin{lemma}
		The connecting homomorphism $\partial$ in the long exact sequence \eqref{eq:long-ex} reads
		\[
		\partial:H^{k}(\mathcal{F})\rightarrow H^{k}(\mathcal{F}):[\alpha]\mapsto (-1)^{k}\left[\pounds_{V}\alpha-\alpha\right].
		\]
	\end{lemma}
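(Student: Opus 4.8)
The plan is to unwind the standard construction of the connecting homomorphism for the given short exact sequence $0\to\Omega^{\bullet-1}(\mathcal{F})\xrightarrow{j}\Omega^{\bullet}(M)\xrightarrow{r}\Omega^{\bullet}(\mathcal{F})\to 0$ at the level of cochains. So, starting from a $d_{\mathcal{F}}$-closed form $\alpha\in\Omega^{k}(\mathcal{F})$, I would first choose a convenient preimage under $r$. The vector field $V$ with $\theta(V)=1$ gives a splitting $TM=T\mathcal{F}\oplus\langle V\rangle$, and hence a canonical extension $\widetilde{\alpha}\in\Omega^{k}(M)$ of $\alpha$ characterized by $\iota_V\widetilde{\alpha}=0$; this is the preimage I would use since $r(\widetilde{\alpha})=\alpha$.

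Next I would compute $d_{\theta}\widetilde{\alpha}=d\widetilde{\alpha}-\theta\wedge\widetilde{\alpha}$ and argue that it lies in the image of $j$, then identify the unique $\beta\in\Omega^{k}(\mathcal{F})$ with $j(\beta)=d_{\theta}\widetilde{\alpha}$; by definition $\partial[\alpha]=[\beta]$. The key computation is to express $d_{\theta}\widetilde{\alpha}$ in terms of the decomposition $TM=T\mathcal{F}\oplus\langle V\rangle$. Writing any form on $M$ via its components with and without a $\theta$-factor, one has $d_{\theta}\widetilde{\alpha}=(\text{leafwise part})+(\text{part with a }\theta)$. The leafwise part of $d\widetilde{\alpha}$ is $d_{\mathcal{F}}\alpha=0$ by hypothesis, and $\theta\wedge\widetilde{\alpha}$ has no leafwise part, so $d_{\theta}\widetilde{\alpha}$ is indeed in $\ker r=\operatorname{Im} j$, confirming the procedure is well-defined. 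To extract $\beta$, I would contract with $V$: from $j(\beta)=\widetilde{\beta}\wedge\theta$ one gets $\iota_V(j(\beta))=\pm\widetilde{\beta}$ up to a restriction to $\mathcal{F}$, so $\beta=\pm\iota_V(d_{\theta}\widetilde{\alpha})|_{\mathcal{F}}$. Now $\iota_V(d_\theta\widetilde\alpha)=\iota_V d\widetilde\alpha-\iota_V(\theta\wedge\widetilde\alpha)=\iota_V d\widetilde\alpha-\widetilde\alpha$ (using $\theta(V)=1$ and $\iota_V\widetilde\alpha=0$), and by Cartan's formula $\iota_V d\widetilde\alpha=\pounds_V\widetilde\alpha-d\iota_V\widetilde\alpha=\pounds_V\widetilde\alpha$. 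Restricting to the leaves, $\iota_V d\widetilde\alpha|_{\mathcal{F}}$ becomes $\pounds_V\alpha$ — here one must check that restricting $\pounds_V\widetilde\alpha$ to $\mathcal{F}$ yields precisely the leafwise Lie derivative $\pounds_V\alpha$, which holds because $V$ is an infinitesimal automorphism of $\mathcal{F}$ (its flow preserves $T\mathcal F$). This gives $\beta=\pm(\pounds_V\alpha-\alpha)$, and the sign works out to $(-1)^{k}$ from the convention in $j(\alpha)=\widetilde\alpha\wedge\theta$ (moving $\theta$ past a $k$-form; one should track this carefully, as the degree shift in the short exact sequence means a $k$-form in $\Omega^{k-1}(\mathcal F)$-slot actually sits in degree $k$, but here $\alpha$ itself has degree $k$, so the relevant sign is from $\widetilde\beta\wedge\theta$ versus $\theta\wedge\widetilde\beta$).

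I expect the main obstacle to be bookkeeping rather than conceptual: pinning down the exact sign $(-1)^{k}$ requires being meticulous about the degree conventions in the short exact sequence (the shift $\Omega^{\bullet-1}(\mathcal F)$) and about the order of the wedge factor in the definition of $j$, and a secondary subtle point is justifying rigorously that $(\pounds_V\widetilde\alpha)|_{\mathcal F}=\pounds_V\alpha$, i.e. that the leafwise Lie derivative is well-defined and computed by restricting any $\iota_V\widetilde\alpha=0$ extension. Both are routine once set up, but the sign is the kind of thing worth double-checking against a low-degree example such as $k=0$ or $k=1$.
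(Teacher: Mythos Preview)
Your approach is correct and essentially the same as the paper's: both start from an extension $\widetilde{\alpha}$ of the closed leafwise form, write $d_{\theta}\widetilde{\alpha}=\beta\wedge\theta$, contract with $V$ to extract $\beta$, and identify $r(\iota_V d\widetilde{\alpha})$ with $\pounds_V\alpha$. The only cosmetic difference is that you fix the particular extension with $\iota_V\widetilde{\alpha}=0$ so that Cartan's formula gives $\iota_V d\widetilde{\alpha}=\pounds_V\widetilde{\alpha}$ on the nose, whereas the paper works with an arbitrary extension and absorbs the extra $d_{\mathcal{F}}\big(r(\iota_V\widetilde{\alpha})\big)$ term in cohomology; either way the computation and the sign tracking are the same.
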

	\begin{proof}
	Let us first recall the abstract definition of the connecting homomorphism $\partial$. Given a class  $[\alpha]\in H^{k}(\mathcal{F})$, pick a representative $\alpha\in\Omega^{k}(\mathcal{F})$. Take any preimage of $\alpha$ under the surjection in the short exact sequence \eqref{eq:short-exact}, i.e. any extension $\widetilde{\alpha}\in\Omega^{k}(M)$ of $\alpha$. Then $d_{\theta}\widetilde{\alpha}$ lies in the kernel of the surjection in \eqref{eq:short-exact}, which is the image of the inclusion in \eqref{eq:short-exact}. Hence, 
		\begin{equation}\label{eq:help}
			d\widetilde{\alpha}-\theta\wedge\widetilde{\alpha}=\beta\wedge\theta
		\end{equation}
		for some $\beta\in\Omega^{k}(M)$. By definition, the connecting homomorphism is given by $\partial[\alpha]=[r(\beta)]$.
		
		To make this explicit, we use the vector field $V\in\mathfrak{X}(M)$ we fixed before with the property that $\theta(V)=1$. Contracting the equality \eqref{eq:help} with $V$ and using that $r(\theta)$ vanishes, we obtain
		\begin{align*}
			[r(\beta)]&=(-1)^{k}\left[r\left(\iota_V(\beta\wedge\theta)\right)\right]\\
			&=(-1)^{k}\left[r\left(\iota_V(d\widetilde{\alpha}-\theta\wedge\widetilde{\alpha})\right)\right]\\
			&=(-1)^{k}\left[r\left(\iota_{V}d\widetilde{\alpha}\right)-\alpha\right]\\
			&=(-1)^{k}\left[r\left(\pounds_{V}\widetilde{\alpha}-d\iota_{V}\widetilde{\alpha}\right)-\alpha\right]\\
			&=(-1)^{k}\left[r\left(\pounds_{V}\widetilde{\alpha}\right)-d_{\mathcal{F}}\big(r(\iota_{V}\widetilde{\alpha})\big)-\alpha\right]\\
			&=(-1)^{k}\left[\pounds_{V}\alpha-\alpha\right].
		\end{align*}
		The last equality uses the definition \eqref{eq:Lie-derivative} of the Lie derivative. This proves the statement.
	\end{proof}

	The long exact sequence \eqref{eq:long-ex} can be split into short exact sequences. Below, we will denote
	\begin{align*}
	&K_{\partial}^{\bullet}:=\ker\left(\partial:H^{\bullet}(\mathcal{F})\rightarrow H^{\bullet}(\mathcal{F})\right),\\
	&C_{\partial}^{\bullet}:=\text{coker}\left(\partial:H^{\bullet}(\mathcal{F})\rightarrow H^{\bullet}(\mathcal{F})\right).
	\end{align*}
	
	\begin{cor}\label{cor:sec}
		Let $\theta\in\Omega^{1}(M)$ be a nowhere-vanishing closed one-form and $\mathcal{F}$ the foliation defined by $\theta$. Then the Morse-Novikov cohomology fits in a short exact sequence
		\begin{equation}\label{eq:sh-ex}
			0\longrightarrow C_{\partial}^{k-1}\longrightarrow H^{k}_{\theta}(M)\overset{[r]}{\longrightarrow} K_{\partial}^{k}\longrightarrow 0.
		\end{equation}
	\end{cor}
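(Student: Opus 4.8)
The plan is to split the long exact sequence \eqref{eq:long-ex} into short exact sequences in the standard way, using the connecting homomorphism $\partial$ to control the maps $[r]$ and $[j]$. First I would read off from \eqref{eq:long-ex} that around degree $k$ the relevant portion is
\[
H^{k-1}(\mathcal{F})\xrightarrow{\partial} H^{k-1}(\mathcal{F})\xrightarrow{[j]} H^{k}_{\theta}(M)\xrightarrow{[r]} H^{k}(\mathcal{F})\xrightarrow{\partial} H^{k}(\mathcal{F}).
\]
Exactness at $H^{k}_{\theta}(M)$ gives $\ker[r]=\Ima[j]$, and exactness at the second $H^{k-1}(\mathcal{F})$ gives $\ker[j]=\Ima\partial$ (viewing $\partial$ as the incoming map into $H^{k-1}(\mathcal{F})$). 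Hence $[j]$ descends to an injection $C_{\partial}^{k-1}=H^{k-1}(\mathcal{F})/\Ima\partial \hookrightarrow H^{k}_{\theta}(M)$ with image exactly $\ker[r]$. So I would define the first map in \eqref{eq:sh-ex} to be the map induced by $[j]$ on the cokernel.

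Next, exactness at $H^{k}(\mathcal{F})$ (the first copy) says $\Ima[r]=\ker\big(\partial:H^{k}(\mathcal{F})\to H^{k}(\mathcal{F})\big)=K_{\partial}^{k}$. Therefore $[r]$ restricts to a surjection $H^{k}_{\theta}(M)\twoheadrightarrow K_{\partial}^{k}$, and I would take this corestriction as the third map in \eqref{eq:sh-ex}. Combining the two observations yields the short exact sequence
\[
0\longrightarrow C_{\partial}^{k-1}\longrightarrow H^{k}_{\theta}(M)\overset{[r]}{\longrightarrow} K_{\partial}^{k}\longrightarrow 0,
\]
where exactness in the middle is precisely $\ker[r]=\Ima[j]$ rewritten via the isomorphism $C_{\partial}^{k-1}\cong\Ima[j]$. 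This is the purely homological core of the argument and it is entirely formal.

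There is essentially no serious obstacle here: the whole statement is the general fact that a long exact sequence built from a connecting map $\partial$ that appears \emph{twice in a row} (once as $H^{k}(\mathcal{F})\to H^{k}(\mathcal{F})$ and once again in the next position) splits into short exact sequences involving $\ker\partial$ and $\operatorname{coker}\partial$. The only point worth spelling out carefully is the identification of the incoming map to $H^{k-1}(\mathcal{F})$ in \eqref{eq:long-ex} with $\partial$ itself, so that $\ker[j]=\Ima\partial$ really is the image of the \emph{same} operator whose cokernel is $C_{\partial}^{k-1}$; this is immediate from the shape of \eqref{eq:long-ex}, where the pattern $\cdots\to H^{k-1}(\mathcal{F})\xrightarrow{\partial}H^{k-1}(\mathcal{F})\xrightarrow{[j]}H^{k}_{\theta}(M)\to\cdots$ is visible directly. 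I would also remark, for bookkeeping, that the degree shift is consistent: $C_{\partial}^{k-1}$ sits in $H^{k}_{\theta}(M)$ because $[j]$ raises degree by one, matching the shift $\Omega^{\bullet-1}(\mathcal{F})\to\Omega^{\bullet}(M)$ in the previous lemma. No estimates, no analysis, and no use of compactness or discreteness of periods is needed at this stage — those hypotheses only enter later when $H^{\bullet}(\mathcal{F})$ and $\partial$ are made explicit.
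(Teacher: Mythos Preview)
Your proposal is correct and matches the paper's approach exactly: the paper states the corollary without proof, simply remarking beforehand that ``the long exact sequence \eqref{eq:long-ex} can be split into short exact sequences,'' which is precisely the formal homological argument you spell out. Your write-up just makes explicit what the paper leaves implicit.
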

	
	Cor.~\ref{cor:sec} can be used to compute the Morse-Novikov cohomology $H^{\bullet}_{\theta}(M)$ in situations where one can calculate the foliated cohomology $H^{\bullet}(\mathcal{F})$. In the next subsection, we consider such a situation, namely when the underlying manifold $M$ is compact, connected and the period group of the one-form $\theta$ is discrete. The computation of Morse-Novikov cohomology in that setup will be used in \S \ref{sec:deformations} to study the deformation theory of Lie affine foliations.

	\subsection{Non-singular one-forms with discrete period group}\label{subsec:non-singular}
	From now on, assume that the manifold $M$ is compact and connected. As before, let $\theta\in\Omega^{1}(M)$ be a closed nowhere-vanishing one-form. We have in particular that the de Rham cohomology class $[\theta]\in H^{1}(M)$ is non-zero. Therefore, the Morse-Novikov cohomology groups $H^{\bullet}_{\theta}(M)$ are in general different from the de Rham cohomology groups $H^{\bullet}(M)$. 
	As before, let $\mathcal{F}$ denote the foliation integrating $\ker\theta$. It is known that the properties of $\mathcal{F}$ depend heavily on the period group
	\[
	\text{Per}(\theta):=\left\{\int_{\gamma}\theta: [\gamma]\in\pi_1(M)\right\}.
	\]
	Since $\text{Per}(\theta)$ is a subgroup of $(\mathbb{R},+)$, it is either discrete or dense. In the former case, the foliation $\mathcal{F}$ is given by the fibers of a suitable fibration $M\rightarrow S^{1}$. In the latter case, all leaves of $\mathcal{F}$ are dense. For a proof of this well-known fact, see \cite[Thm.~9.3.13]{conlon}.
	
	In the fibration case, the foliated cohomology $H^{\bullet }(\mathcal{F})$ is well-behaved, as we explain below. In the case with dense leaves, the foliated cohomology can be quite pathological. Below, we will focus on the case in which the period group is discrete, i.e. the setup is as follows:
	
	\vspace{0.2cm}
	
	\noindent\fbox{%
		\parbox{\textwidth}{%
			In the following, $M$ is a compact, connected manifold and $\theta\in\Omega^{1}(M)$ is a closed, nowhere-vanishing one-form with period group given by
			$
			\text{Per}(\theta)=c\mathbb{Z}
			$
			for some $c\in\mathbb{R}_{>0}$.
		}%
	}
	\vspace{0.2cm}

	Discreteness of $\text{Per}(\theta)$ imposes that $M$ is a mapping torus and that $\mathcal{F}$ is given by the fibers of the natural fibration $M\rightarrow S^{1}$. 
	To explain this, recall that given a manifold $L$ and a diffeomorphism $\varphi\in\text{Diff}(L)$, the mapping torus $\mathbb{T}_{\varphi}$ is defined to be the manifold
	\[
	\mathbb{T}_{\varphi}:=\frac{\mathbb{R}\times L}{(t,x)\sim(t+1,\varphi(x))}.
	\]
	\begin{lemma}
		Fix a leaf $L$ of $\mathcal{F}$. There exists a diffeomorphism $\varphi\in\text{Diff}(L)$ such that $M$ is diffeomorphic to the mapping torus $\mathbb{T}_{\varphi}$. Moreover, the diffeomorphism $M\cong\mathbb{T}_{\varphi}$ can be chosen to intertwine the foliation $\mathcal{F}$ on $M$ and the foliation on $\mathbb{T}_{\varphi}$ by fibers of $\mathbb{T}_{\varphi}\rightarrow S^{1}$.
	\end{lemma}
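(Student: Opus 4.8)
The plan is to build the diffeomorphism $M \cong \mathbb{T}_\varphi$ by first constructing the fibration $M \to S^1$ associated with $\theta$, and then producing the monodromy diffeomorphism $\varphi$ of the fixed leaf $L$. First I would use the hypothesis $\mathrm{Per}(\theta) = c\mathbb{Z}$: dividing by $c$, the class $[\theta]/c \in H^1(M;\mathbb{R})$ lies in the image of $H^1(M;\mathbb{Z})$, so it is represented by a smooth map $p\colon M \to \mathbb{R}/\mathbb{Z} = S^1$ with $p^*(dt) = \theta/c$ (here $t$ is the angle coordinate). Since $\theta$ is nowhere-vanishing, $p$ is a submersion, and since $M$ is compact, $p$ is a proper submersion, hence a locally trivial fibration by Ehresmann's theorem. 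The fibers of $p$ are precisely the leaves of $\mathcal{F}$, because $\ker dp_x = \ker \theta_x = T_x\mathcal{F}$ at every point; in particular $L$ is one of these fibers, so all fibers are diffeomorphic to $L$.

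Next I would produce the monodromy. Choose the vector field $V \in \mathfrak{X}(M)$ with $\theta(V) = 1$ (as already introduced in the excerpt, after rescaling so that $\theta(V) = c$, i.e. the $V$ that is $p$-related to $c\,\partial_t$ on $S^1$); since $M$ is compact, the flow $\Phi^s_V$ is complete. Because $V$ projects to the constant vector field $c\,\partial_t$ on $S^1$, the time-$(1/c)$ flow of the appropriately normalized field maps the fiber $p^{-1}(0) = L$ diffeomorphically to itself; call this diffeomorphism $\varphi \in \mathrm{Diff}(L)$. Then the map $\mathbb{R} \times L \to M$, $(s,x) \mapsto \Phi^{s}_{V}(x)$ (with $V$ normalized so that projecting to $S^1$ it is $\partial_t$) descends to the quotient $\mathbb{T}_\varphi$, because $\Phi^{1}_V(x) = \varphi(x)$ by construction, and the induced map $\mathbb{T}_\varphi \to M$ is a bijective local diffeomorphism, hence a diffeomorphism. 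Under this identification, the fibration $\mathbb{T}_\varphi \to S^1$ corresponds to $p\colon M \to S^1$, so the fiber foliation of $\mathbb{T}_\varphi$ is carried to $\mathcal{F}$, which is the second assertion.

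The main obstacle is the normalization bookkeeping around the constant $c$ and the choice of $V$: one must be careful that the same $V$ simultaneously (a) is $p$-related to a constant field on $S^1$ so that its flow permutes fibers, and (b) has period exactly matching the identification $(t,x) \sim (t+1, \varphi(x))$ defining $\mathbb{T}_\varphi$. Concretely, $\theta/c$ has integral periods and defines $p$; the field $V$ should satisfy $(\theta/c)(V) = 1$, equivalently $\theta(V) = c$, so that $V$ is $p$-related to $\partial_t$ and $\Phi^1_V$ restricts to a diffeomorphism $\varphi$ of $L = p^{-1}(0)$. Everything else — completeness of the flow, the fact that $\ker dp = \ker \theta$, and that the descended map is a diffeomorphism — is routine once this normalization is pinned down. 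I would also remark that the diffeomorphism type of $\varphi$ depends on choices (the section transverse to $\mathcal{F}$ used to integrate $V$), but this does not affect the statement, which only asserts existence.
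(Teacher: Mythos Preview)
Your approach is essentially the same as the paper's: both use the flow of a vector field $W$ with $\theta(W)=c$ to build the map $\mathbb{R}\times L\to M$, $(t,x)\mapsto\phi_t(x)$, and descend it to the mapping torus. The paper does not first construct the fibration $p\colon M\to S^1$ or invoke Ehresmann; those steps in your outline are correct but redundant, since the flow construction already yields the diffeomorphism and identifies the leaves with the fibers.

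There is one concrete slip, of exactly the ``normalization bookkeeping'' kind you anticipated. With the paper's convention $\mathbb{T}_\varphi=(\mathbb{R}\times L)/\big((t,x)\sim(t+1,\varphi(x))\big)$ and your $V$ normalized so that it is $p$-related to $\partial_t$, the map $(s,x)\mapsto\Phi^s_V(x)$ descends precisely when $\Phi^{t+1}_V(\varphi(x))=\Phi^t_V(x)$, which forces $\varphi=\Phi^{-1}_V$, not $\varphi=\Phi^1_V$. The paper accordingly takes $\varphi=\phi_{-1}$. This does not affect the existence statement, but your justification ``because $\Phi^1_V(x)=\varphi(x)$'' is off by a sign.
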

	\begin{proof}
		We only outline the proof. For more details, we refer to \cite[\S 9.3]{conlon} and \cite[\S 2.6.3]{thesis}.
		
		Fix a vector field $W\in\mathfrak{X}(M)$ such that $\theta(W)=c$. The flow $\phi_t$ of $W$ consists of automorphisms of the foliation $\mathcal{F}$, i.e. $\phi_t$ sends leaves to leaves. Since $\text{Per}(\theta)=c\mathbb{Z}$, we have for each leaf $L$ of $\mathcal{F}$ that
		\[
		\big\{t\in\mathbb{R}:\phi_t(L)=L\big\}=\mathbb{Z}.
		\]
		For a fixed leaf $L$, we now consider the smooth surjection
		\[
		\psi:\mathbb{R}\times L\rightarrow M: (t,p)\mapsto\phi_t(p)
		\]
		and note that
		\begin{align*}
			\phi_t(p)=\phi_{t'}(p')&\Leftrightarrow\begin{cases}t'=t+n\\ p=\phi_n(p')
			\end{cases}\ \ \text{for some}\ n\in\mathbb{Z}.
		\end{align*}
		This makes us define a $\mathbb{Z}$-action on $\mathbb{R}\times L$, given by
		\[
		n\cdot(t,p)=\left(t+n,(\phi_{-1})^{n}(p)\right).
		\]
		The action is free and properly discontinuous, and clearly the quotient manifold $(\mathbb{R}\times L)/\mathbb{Z}$ is precisely the mapping torus $\mathbb{T}_{\phi_{-1}}$. The map $\psi$ induces a diffeomorphism 
		\[
		\overline{\psi}:\mathbb{T}_{\phi_{-1}}\rightarrow M:[(t,p)]\mapsto\phi_t(p).
		\]
		Finally, it is clear that $\overline{\psi}$ takes the fibers of the natural fibration
		\[
		\mathbb{T}_{\phi_{-1}}\rightarrow S^{1}:[(t,p)]\mapsto t\ \text{mod}\ \mathbb{Z}
		\]
		to the leaves $\phi_t(L)$ of $\mathcal{F}$. This finishes the proof of the lemma.
	\end{proof}
	
	Hence in the following, we may assume that the manifold $M$ is a mapping torus $\mathbb{T}_{\varphi}$. Since the kernel of $\theta$ coincides with the vertical distribution of $\mathbb{T}_{\varphi}\rightarrow S^{1}$, we must have that $\theta$ is a basic one-form. We will also denote the angle coordinate on $S^{1}$ by $t$. Therefore, we have
	\[
	\theta=f(t)dt\in\Omega^{1}(S^1).
	\]
	In de Rham cohomology, we have that $[\theta]=[cdt]$ in $H^{1}(\mathbb{T}_{\varphi})$. Because the Morse-Novikov cohomology groups $H_{\theta}^{\bullet}(\mathbb{T}_{\varphi})$ only depend on the de Rham cohomology class of $\theta$ by Prop.~\ref{prop:prelim}, we may assume that $\theta=cdt$. In conclusion, we have reduced the computation of the Morse-Novikov cohomology groups $H^{k}_{\theta}(M)$ to the following special case.
	
	\begin{pbl}
		Compute the Morse-Novikov cohomology groups $H^{k}_{cdt}(\mathbb{T}_{\varphi})$ for $c\neq 0$.
	\end{pbl}

	To use the exact sequence \eqref{eq:sh-ex} in Cor.~\ref{cor:sec}, we need a convenient description for the foliated cohomology groups $H^{\bullet}(\mathcal{F})$, where $\mathcal{F}$ is the foliation by fibers of $\pi:\mathbb{T}_{\varphi}\rightarrow S^{1}$. The facts we are about to recall can be found in \cite[\S~4.1]{marcut}.
	It is well-known that the $k$-th cohomology groups of the fibers of $\pi$ constitute a smooth vector bundle, which we denote by $\mathcal{H}^{k}\rightarrow S^{1}$. 
	
	\begin{defi}
	The vector bundle $\mathcal{H}^{k}\rightarrow S^{1}$ is defined by $\mathcal{H}^{k}_{q}:=H^{k}\left(\pi^{-1}(q)\right)$ for $q\in S^{1}$.
	\end{defi}
	
	The fact that $\mathcal{H}^{k}\rightarrow S^{1}$ is indeed a smooth vector bundle was proved in \cite[Lemma~4.6]{marcut}.
	It turns out that the foliated cohomology group $H^{k}(\mathcal{F})$ can be identified with the space of sections $\Gamma(\mathcal{H}^{k})$. The explicit isomorphism is stated in the following result \cite[Lemma~4.8]{marcut}. 
	
	\begin{lemma}
	Let $\mathcal{F}$ be the foliation by fibers of $\pi:\mathbb{T}_{\varphi}\rightarrow S^{1}$. We then have an isomorphism 
	\[
	H^{k}(\mathcal{F})\overset{\sim}{\longrightarrow}\Gamma(\mathcal{H}^{k}):[\beta]\mapsto \sigma_{\beta},
	\]
	where the section $\sigma_{\beta}$ is defined by setting $\sigma_{\beta}(q)=\left[\beta|_{\pi^{-1}(q)}\right]\in H^{k}\left(\pi^{-1}(q)\right)$ for all $q\in S^{1}$.
	\end{lemma}
	
	It is well-known that the vector bundle $\mathcal{H}^{k}\rightarrow S^{1}$ carries a canonical flat connection $\nabla$, called Gauss-Manin connection \cite[Def.~4.7]{marcut}. In a local trivialization
	$
	\mathcal{H}^{k}|_U\cong U\times H^{k}(L),
	$
	the flat sections of $\nabla$ are locally constant. An explicit way of describing $\nabla$ is the following. Let $\mathfrak{X}(\mathbb{T}_{\varphi})^{\mathcal{F}}$ be the Lie subalgebra of $\left(\mathfrak{X}(\mathbb{T}_{\varphi}),[\cdot,\cdot]\right)$ consisting of $\mathcal{F}$-projectable vector fields, 
	as introduced in Def.~\ref{def:projectable}. Clearly, it fits in a short exact sequence of Lie algebras
	\begin{equation}\label{eq:short-ex}
		0\longrightarrow \Gamma(T\mathcal{F})\longrightarrow \mathfrak{X}(\mathbb{T}_{\varphi})^{\mathcal{F}}\longrightarrow\mathfrak{X}(S^{1})\longrightarrow 0.
	\end{equation}
	
	\begin{defi}
	The Gauss-Manin connection $\nabla$ on $\mathcal{H}^{k}\rightarrow S^{1}$ is defined by setting
	\begin{equation}\label{eq:Gauss-Manin}
	\nabla_{V}\sigma_{\beta}:=\sigma_{\pounds_{\widetilde{V}}\beta}.
	\end{equation}
	for $\sigma_{\beta}\in\Gamma(\mathcal{H}^{k})$ and $V\in\mathfrak{X}(S^{1})$. Here $\widetilde{V}\in\mathfrak{X}(\mathbb{T}_{\varphi})^{\mathcal{F}}$ is any lift of $V$ in the sequence \eqref{eq:short-ex}.
	\end{defi}
	
	The Lie derivative appearing above is well-defined because $\widetilde{V}$ is $\mathcal{F}$-projectable, see Rem.~\ref{rem:Lie-der}. The expression \eqref{eq:Gauss-Manin} can be obtained by combining \cite[Thm.~4.1~(d)]{marcut} and \cite[Lemma~4.8]{marcut}.
	
	\vspace{0.2cm}
	
	With these preliminaries in place, we can now revisit the short exact sequence \eqref{eq:sh-ex}. We will rephrase it in terms of data attached to the flat vector bundles $(\mathcal{H}^{k},\nabla)$. Since the zeroth Morse-Novikov cohomology group is well-known (see Prop.~\ref{prop:prelim} $ii)$), we list it separately.

	\begin{cor}\label{cor:nabla}
		The Morse-Novikov cohomology $H^{\bullet}_{cdt}(\mathbb{T}_{\varphi})$ for $c\neq 0$ is given by:
		\begin{enumerate}[i)]
			\item $H^{0}_{cdt}(\mathbb{T}_{\varphi})=0$,
			\item For $k\geq 1$, we have a short exact sequence
			\begin{equation*}
				0\rightarrow\frac{\Gamma(\mathcal{H}^{k-1})}{\left\{\nabla_{\partial_t}\sigma_{\beta}-c\sigma_{\beta}: \sigma_{\beta}\in\Gamma(\mathcal{H}^{k-1})\right\}} \rightarrow H^{k}_{cdt}(\mathbb{T}_{\varphi})\rightarrow \left\{\sigma_{\beta}\in\Gamma(\mathcal{H}^{k}): \nabla_{\partial_t}\sigma_{\beta}=c\sigma_{\beta}\right\}\rightarrow 0.
			\end{equation*}
		\end{enumerate}
	\end{cor}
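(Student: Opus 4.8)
The plan is to deduce statement $ii)$ by specializing the short exact sequence \eqref{eq:sh-ex} of Cor.~\ref{cor:sec} to $\theta=cdt$ and rewriting the connecting homomorphism $\partial$ in terms of the Gauss--Manin connection via the identification $H^{\bullet}(\mathcal{F})\cong\Gamma(\mathcal{H}^{\bullet})$. Statement $i)$ needs no work: since $\operatorname{Per}(cdt)=c\mathbb{Z}$ is nontrivial, the form $cdt$ is not exact on the connected manifold $\mathbb{T}_{\varphi}$, so $H^{0}_{cdt}(\mathbb{T}_{\varphi})=0$ by Prop.~\ref{prop:prelim}$\,ii)$.

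For $ii)$ I would start from the description of the connecting homomorphism obtained above, namely $\partial\colon H^{k}(\mathcal{F})\to H^{k}(\mathcal{F})$, $[\alpha]\mapsto(-1)^{k}[\pounds_{V}\alpha-\alpha]$, for any $V\in\mathfrak{X}(\mathbb{T}_{\varphi})$ with $(cdt)(V)=1$. The first observation is that such a $V$ is automatically a projectable lift of $\tfrac1c\partial_{t}$: from $\pounds_{V}(cdt)=d\iota_{V}(cdt)+\iota_{V}d(cdt)=0$ one gets that $V$ preserves $\mathcal{F}=\ker(cdt)$, hence $V\in\mathfrak{X}(\mathbb{T}_{\varphi})^{\mathcal{F}}$, and the relation $dt(d\pi(V))=\tfrac1c$ forces $d\pi(V)=\tfrac1c\partial_{t}$. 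Consequently $cV$ is a projectable lift of $\partial_{t}$, and by the defining formula of the Gauss--Manin connection in \eqref{eq:short-ex}ff.\ we have $\nabla_{\partial_{t}}\sigma_{\beta}=\sigma_{\pounds_{cV}\beta}=c\,\sigma_{\pounds_{V}\beta}$. Thus, under $H^{k}(\mathcal{F})\cong\Gamma(\mathcal{H}^{k})$, the endomorphism induced by $\pounds_{V}$ is $\tfrac1c\nabla_{\partial_{t}}$, and $\partial$ is identified with the operator $\sigma_{\beta}\mapsto\tfrac{(-1)^{k}}{c}\big(\nabla_{\partial_{t}}\sigma_{\beta}-c\,\sigma_{\beta}\big)$ on $\Gamma(\mathcal{H}^{k})$.

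Since the prefactor $\tfrac{(-1)^{k}}{c}$ is a nonzero scalar, $\partial$ and $\nabla_{\partial_{t}}-c\,\mathrm{Id}$ have the same kernel and cokernel in each degree. Hence $K_{\partial}^{k}=\{\sigma_{\beta}\in\Gamma(\mathcal{H}^{k}):\nabla_{\partial_{t}}\sigma_{\beta}=c\,\sigma_{\beta}\}$ and $C_{\partial}^{k-1}=\Gamma(\mathcal{H}^{k-1})/\{\nabla_{\partial_{t}}\sigma_{\beta}-c\,\sigma_{\beta}:\sigma_{\beta}\in\Gamma(\mathcal{H}^{k-1})\}$, and plugging these into \eqref{eq:sh-ex} gives exactly the short exact sequence asserted for $k\geq 1$.

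I do not expect a genuine obstacle here; the whole argument is a translation exercise once Cor.~\ref{cor:sec} is in hand. The one point deserving care is the compatibility of the identification $H^{\bullet}(\mathcal{F})\cong\Gamma(\mathcal{H}^{\bullet})$ with Lie derivatives along projectable vector fields, i.e.\ that $[\pounds_{V}\alpha]\leftrightarrow\nabla_{d\pi(V)}\sigma_{\alpha}$ for the vector field $V$ used in the formula for $\partial$. This is built into the definition of $\nabla$, so it suffices to record — as above — that a $V$ with $(cdt)(V)=1$ really is a projectable lift of $\tfrac1c\partial_{t}$; one should also note in passing that $\pounds_{V}$ is well defined on $\Omega^{\bullet}(\mathcal{F})$ and commutes with $d_{\mathcal{F}}$, which again follows from $V$ being an infinitesimal automorphism of $\mathcal{F}$. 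Everything else is routine bookkeeping.
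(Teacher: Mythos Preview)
Your proposal is correct and follows exactly the route the paper intends: the corollary is stated there as a direct rephrasing of Cor.~\ref{cor:sec} under the identification $H^{\bullet}(\mathcal{F})\cong\Gamma(\mathcal{H}^{\bullet})$, with part $i)$ handled by Prop.~\ref{prop:prelim}\,$ii)$. You have simply written out the translation of $\partial$ into $\tfrac{(-1)^{k}}{c}(\nabla_{\partial_t}-c\,\mathrm{Id})$ that the paper leaves implicit, and your check that a $V$ with $(cdt)(V)=1$ is a projectable lift of $\tfrac{1}{c}\partial_t$ is the right justification for using the defining formula of the Gauss--Manin connection.
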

	
	For computational purposes, it is useful to note that sections of $\mathcal{H}^{k}\rightarrow S^{1}$ can be identified with $\mathbb{Z}$-invariant sections of $H^{k}(L)\times\mathbb{R}\rightarrow\mathbb{R}$. To explain this, note that the vector bundle $H^{k}(L)\times\mathbb{R}\rightarrow\mathbb{R}$ is equivariant, in the sense that the group $\mathbb{Z}$ acts on $H^{k}(L)\times\mathbb{R}$ by vector bundle automorphisms. Indeed, we have a commutative diagram for each $n\in\mathbb{Z}$:
	\[
	\hspace{-3cm}
	\begin{tikzcd}[column sep=10em]
		&H^{k}(L)\times\mathbb{R} \arrow{r}{\left([\alpha],t\right)\mapsto\left([(\varphi^{*})^{-n}(\alpha)],t+n\right)}\arrow[d] &H^{k}(L)\times\mathbb{R}\arrow[d]\\
		&\mathbb{R}\arrow[r,"t\mapsto t+n"]&\mathbb{R}
	\end{tikzcd}.
	\]
	The action $\mathbb{Z}\curvearrowright H^{k}(L)\times\mathbb{R}$ is free and properly discontinuous, since it covers the action $\mathbb{Z}\curvearrowright\mathbb{R}$ by translations. It follows that the quotient manifold $\big(H^{k}(L)\times\mathbb{R}\big)/\mathbb{Z}$ is a vector bundle over $S^{1}$, which is precisely the bundle $\mathcal{H}^{k}\rightarrow S^{1}$. See for instance \cite[Ex.~4.30]{Bunke}.
	We can then identify $\Gamma(\mathcal{H}^{k})$ with the space of $\mathbb{Z}$-invariant sections
	\[
	\Gamma\big(H^{k}(L)\times\mathbb{R}\big)^{\mathbb{Z}}:=\left\{[\sigma(t)]\in\Gamma\big(H^{k}(L)\times\mathbb{R}\big):[\sigma(t)]=[\varphi^{*}(\sigma(t+1))]\ \ \forall t\in\mathbb{R}\right\}.
	\]

	We now proceed with the computation of the cohomology groups $H^{k}_{cdt}(\mathbb{T}_{\varphi})$, using Cor.~\ref{cor:nabla}. The following proposition takes care of the last term appearing in Cor.~\ref{cor:nabla} $ii)$.
	
	\begin{prop}\label{prop:ker}
		For any $k\in\mathbb{N}$, we have an isomorphism of vector spaces
		\[
		\left\{\sigma_{\beta}\in\Gamma(\mathcal{H}^{k}): \nabla_{\partial_t}\sigma_{\beta}=c\sigma_{\beta}\right\}\overset{\sim}{\rightarrow} \ker\left([\varphi^{*}-e^{-c}\mathrm{Id}]:H^{k}(L)\rightarrow H^{k}(L)\right):\sigma_{\beta}\mapsto\sigma_{\beta}(0).
		\]
	\end{prop}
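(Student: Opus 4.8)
The plan is to solve the ODE $\nabla_{\partial_t}\sigma_\beta=c\sigma_\beta$ explicitly on the universal cover $\mathbb{R}$ and then translate the $\mathbb{Z}$-invariance condition into a linear-algebra statement about $\varphi^*$. First I would use the identification $\Gamma(\mathcal{H}^k)\cong\Gamma\big(H^k(L)\times\mathbb{R}\big)^{\mathbb{Z}}$ recalled just before the statement, so that a section becomes a smooth path $\sigma\colon\mathbb{R}\to H^k(L)$ subject to $\sigma(t)=\varphi^*(\sigma(t+1))$ for all $t$. Under this identification the Gauss--Manin connection $\nabla_{\partial_t}$ becomes the ordinary derivative $\tfrac{d}{dt}$, because in the trivialization $H^k(L)\times\mathbb{R}$ the flat sections of $\nabla$ are exactly the constant ones. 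Hence the equation $\nabla_{\partial_t}\sigma_\beta=c\sigma_\beta$ reads $\sigma'(t)=c\,\sigma(t)$, an ODE in the finite-dimensional vector space $H^k(L)$.

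Next I would solve this ODE: its solutions are precisely $\sigma(t)=e^{ct}a$ for a fixed $a\in H^k(L)$, namely $a=\sigma(0)$. So the space of solutions (before imposing $\mathbb{Z}$-invariance) is in bijection with $H^k(L)$ via $\sigma\mapsto\sigma(0)$. Now impose the $\mathbb{Z}$-invariance constraint $\sigma(t)=\varphi^*(\sigma(t+1))$. Plugging in $\sigma(t)=e^{ct}a$ gives $e^{ct}a=\varphi^*\big(e^{c(t+1)}a\big)=e^{c}e^{ct}\varphi^*(a)$, and cancelling the nowhere-zero scalar $e^{ct}$ yields $a=e^{c}\varphi^*(a)$, equivalently $\varphi^*(a)=e^{-c}a$, i.e. $a\in\ker\big([\varphi^*-e^{-c}\mathrm{Id}]\colon H^k(L)\to H^k(L)\big)$. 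It suffices to check this at $t=0$ since the relation holds for all $t$ as soon as it holds for one $t$ (the factor $e^{ct}$ is invertible). Therefore the evaluation map $\sigma_\beta\mapsto\sigma_\beta(0)$ restricts to a linear bijection from $\{\sigma_\beta\in\Gamma(\mathcal{H}^k):\nabla_{\partial_t}\sigma_\beta=c\sigma_\beta\}$ onto $\ker\big([\varphi^*-e^{-c}\mathrm{Id}]\big)$, with inverse $a\mapsto\big[t\mapsto e^{ct}a\big]$.

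The main obstacle, such as it is, is bookkeeping rather than a genuine difficulty: one must carefully justify that under the equivariant trivialization $H^k(L)\times\mathbb{R}$ the operator $\nabla_{\partial_t}$ really is $\tfrac{d}{dt}$, and keep track of the direction of the twisting (whether it is $\varphi^*$ or $(\varphi^*)^{-1}$ that appears, and whether the eigenvalue is $e^{c}$ or $e^{-c}$), since a sign error here would be fatal to the statement. This is exactly the point where the conventions fixed in the equivariance diagram above---$([\alpha],t)\mapsto([(\varphi^*)^{-n}\alpha],t+n)$---need to be used, so I would be explicit about which convention makes the $\mathbb{Z}$-invariance read $\sigma(t)=\varphi^*(\sigma(t+1))$ and then simply propagate it through the computation $e^{ct}a=e^{c}e^{ct}\varphi^*(a)$. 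Once the trivialization and the invariance condition are pinned down correctly, linearity and bijectivity of $\sigma_\beta\mapsto\sigma_\beta(0)$ are immediate from the explicit inverse, and there is nothing further to prove.
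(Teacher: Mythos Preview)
Your proposal is correct and follows essentially the same approach as the paper: solve the ODE $\sigma'(t)=c\sigma(t)$ on the cover to get $\sigma(t)=e^{ct}a$, then impose the $\mathbb{Z}$-invariance $\sigma(t)=\varphi^*(\sigma(t+1))$ to obtain the eigenvalue condition $\varphi^*(a)=e^{-c}a$. The only cosmetic difference is that the paper fixes a basis of $H^k(L)$ and writes the argument in coordinates, whereas you work directly with the vector-valued path.
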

	\begin{proof}
		We view sections of $\mathcal{H}^{k}\rightarrow S^{1}$ as $\mathbb{Z}$-invariant sections of $H^{k}(L)\times\mathbb{R}\rightarrow\mathbb{R}$. Fix a basis $[\beta_1],\ldots[\beta_n]$ of $H^{k}(L)$. Any section $\sigma_{\beta}\in\Gamma(\mathcal{H}^{k})$ can be written as
		$$
		\sigma_{\beta}=\sum_{i=1}^{n}f_{i}(t)[\beta_i]
		$$
		for some $f_i\in C^{\infty}(\mathbb{R})$. The condition $\nabla_{\partial_t}\sigma_{\beta}=c\sigma_{\beta}$ translates to $f_i'=cf_i$, hence $f_i(t)=K_{i}e^{ct}$ for some constants $K_i\in\mathbb{R}$. Imposing that $\sum_{i}f_i(t)[\beta_i]$ is $\mathbb{Z}$-invariant means that
		\[
		[\varphi^{*}]\left(\sum_{i=1}^{n}K_{i}e^{c(t+1)}[\beta_i]\right)=\sum_{i=1}^{n}K_{i}e^{ct}[\beta_i],
		\]
		which is equivalent with
		\[
		[\varphi^{*}-e^{-c}\mathrm{Id}]\left(\sum_{i=1}^{n}K_{i}[\beta_i]\right)=0.\qedhere
		\]
	\end{proof}

	To deal with the first term appearing in Cor.~\ref{cor:nabla} $ii)$, we need the following lemma.
	
	\begin{lemma}\label{lem:ode}
		Fix a constant $c\in\mathbb{R}$ and a function $g\in C^{\infty}(\mathbb{R})$. The solution $f\in C^{\infty}(\mathbb{R})$ to the differential equation
		\[
		g(t)=f'(t)-cf(t)
		\]
		is given by
		\[
		f(t)=e^{ct}\left(\int_{0}^{t}g(s)e^{-cs}ds+K\right),\hspace{1cm}K\in\mathbb{R}.
		\]
	\end{lemma}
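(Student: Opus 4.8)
The plan is to solve this first-order linear ODE by the standard integrating-factor method and then record that the displayed formula captures \emph{all} solutions as $K$ ranges over $\mathbb{R}$. First I would multiply the equation $g(t)=f'(t)-cf(t)$ by the nowhere-vanishing factor $e^{-ct}$ and observe that the right-hand side becomes a total derivative:
\[
e^{-ct}g(t)=e^{-ct}f'(t)-ce^{-ct}f(t)=\frac{d}{dt}\big(e^{-ct}f(t)\big).
\]
Integrating both sides over $[0,t]$ and using the fundamental theorem of calculus gives
\[
\int_{0}^{t}g(s)e^{-cs}\,ds=e^{-ct}f(t)-f(0),
\]
so that $f(t)=e^{ct}\big(\int_{0}^{t}g(s)e^{-cs}\,ds+f(0)\big)$, which is the asserted formula with $K=f(0)$.

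Conversely, I would check directly that for every $K\in\mathbb{R}$ the function $f(t):=e^{ct}\big(\int_{0}^{t}g(s)e^{-cs}\,ds+K\big)$ is smooth (since $g$ is smooth, so is the integrand, and hence so is its antiderivative) and solves the equation: differentiating via the product rule and the fundamental theorem of calculus yields $f'(t)=ce^{ct}\big(\int_{0}^{t}g(s)e^{-cs}\,ds+K\big)+e^{ct}g(t)e^{-ct}=cf(t)+g(t)$, as required. Finally, since the difference of any two solutions solves the homogeneous equation $h'=ch$, whose solutions are exactly $h(t)=Ke^{ct}$, the family above is the complete solution set, matching the statement.

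The computation is entirely routine, so there is no real obstacle; the only point deserving a word of care is that the integrating-factor manipulation is valid globally on $\mathbb{R}$ — there is no singularity, as $e^{-ct}$ never vanishes — so the formula describes the solution on all of $\mathbb{R}$ rather than merely on a subinterval. This global validity is exactly what is needed for the subsequent application to $\mathbb{Z}$-invariant sections of $H^{\bullet}(L)\times\mathbb{R}\rightarrow\mathbb{R}$.
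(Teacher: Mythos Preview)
Your proof is correct and essentially matches the paper's: both verify by direct differentiation that $e^{ct}\int_{0}^{t}g(s)e^{-cs}\,ds$ is a particular solution and observe that the homogeneous solutions are $Ke^{ct}$. You additionally derive the formula via the integrating factor before verifying it, which is a harmless extra step; the paper simply writes down the candidate and checks it.
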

	\begin{proof}
		Clearly, the solution $f\in C^{\infty}(\mathbb{R})$ to the homogeneous equation $0=f'-cf$ is given by $f(t)=Ke^{ct}$ for $K\in\mathbb{R}$. Hence, the statement follows if we show that
		\[
		f(t)=e^{ct}\left(\int_{0}^{t}g(s)e^{-cs}ds\right)
		\]
		is a particular solution to the ODE. This immediately follows from the computation
		\[
		f'(t)-cf(t)=ce^{ct}\left(\int_{0}^{t}g(s)e^{-cs}ds\right)+e^{ct}g(t)e^{-ct}-ce^{ct}\left(\int_{0}^{t}g(s)e^{-cs}ds\right)=g(t).\qedhere
		\] 
	\end{proof}

	We can now study the first term in Cor.~\ref{cor:nabla} $ii)$. Again, we identify sections of $\mathcal{H}^{k}\rightarrow S^{1}$ with $\mathbb{Z}$-invariant sections of $H^{k}(L)\times\mathbb{R}\rightarrow\mathbb{R}$.
	
	\begin{prop}\label{prop:coker}
		\begin{enumerate}[i)]
			\item For every $k\in\mathbb{N}$ and $c\in\mathbb{R}$, we have a surjective linear map
			\begin{equation}\label{eq:surjection}
				\Gamma(\mathcal{H}^{k})\rightarrow H^{k}(L):\sigma_{\beta}\mapsto\int_{0}^{1}e^{-ct}\sigma_{\beta}(t)dt.
			\end{equation}
			\item For every $k\in\mathbb{N}$ and $c\in\mathbb{R}$, the map \eqref{eq:surjection} induces an isomorphism of vector spaces
			\[
			\frac{\Gamma(\mathcal{H}^{k})}{\left\{\nabla_{\partial_t}\sigma_{\beta}-c\sigma_{\beta}: \sigma_{\beta}\in\Gamma(\mathcal{H}^{k})\right\}}\overset{\sim}{\rightarrow} \mathrm{coker}\left([\varphi^{*}-e^{-c}\mathrm{Id}]:H^{k}(L)\rightarrow H^{k}(L)\right).
			\]
		\end{enumerate}
	\end{prop}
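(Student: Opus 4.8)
The plan is to run the entire argument inside the identification, recalled just before Prop.~\ref{prop:ker}, of $\Gamma(\mathcal{H}^{k})$ with the space of $\mathbb{Z}$-invariant sections of $H^{k}(L)\times\mathbb{R}\rightarrow\mathbb{R}$: a section $\sigma_\beta$ becomes a smooth curve $\sigma\colon\mathbb{R}\rightarrow H^{k}(L)$ with $\sigma(t)=\varphi^{*}\sigma(t+1)$ for all $t$, and under this identification $\nabla_{\partial_t}$ becomes ordinary differentiation $\sigma\mapsto\sigma'$ (this is what is used in the proof of Prop.~\ref{prop:ker}). Throughout I abbreviate $b(\sigma):=\int_{0}^{1}e^{-ct}\sigma(t)\,dt\in H^{k}(L)$ for the value of the map~\eqref{eq:surjection}.

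\emph{Part $i)$.} Given $[\eta]\in H^{k}(L)$, I would pick $\rho\in C^{\infty}(\mathbb{R})$ supported in the open interval $(0,1)$, rescaled so that $\int_{0}^{1}e^{-ct}\rho(t)\,dt=1$. Since $\rho$ vanishes to infinite order at every integer, the piecewise formula $\sigma(t):=(\varphi^{*})^{-n}\big(\rho(t-n)[\eta]\big)$ for $t\in[n,n+1]$ defines a smooth $\mathbb{Z}$-invariant section, and $b(\sigma)=[\eta]$ by construction. This gives surjectivity of~\eqref{eq:surjection}.

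\emph{Part $ii)$.} First I would check that the composite of~\eqref{eq:surjection} with the projection $H^{k}(L)\rightarrow\mathrm{coker}\big([\varphi^{*}-e^{-c}\mathrm{Id}]\big)$ kills the subspace $\{\nabla_{\partial_t}\sigma_\beta-c\sigma_\beta\}$, so that it descends to the quotient. This is immediate from the identity $\frac{d}{dt}\big(e^{-ct}\sigma(t)\big)=e^{-ct}\big(\sigma'(t)-c\sigma(t)\big)$, which on integrating over $[0,1]$ and using $\sigma(0)=\varphi^{*}\sigma(1)$ gives
\[
b\big(\nabla_{\partial_t}\sigma-c\sigma\big)=e^{-c}\sigma(1)-\varphi^{*}\sigma(1)=-\big[\varphi^{*}-e^{-c}\mathrm{Id}\big]\big(\sigma(1)\big)\in\Ima\big[\varphi^{*}-e^{-c}\mathrm{Id}\big].
\]
Surjectivity of the induced map onto $\mathrm{coker}\big([\varphi^{*}-e^{-c}\mathrm{Id}]\big)$ is exactly part $i)$, so the whole statement reduces to injectivity.

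\emph{Injectivity} is where I expect the real work to be. Suppose $\sigma$ is $\mathbb{Z}$-invariant with $b(\sigma)=\big[\varphi^{*}-e^{-c}\mathrm{Id}\big]\xi$ for some $\xi\in H^{k}(L)$; I must produce a $\mathbb{Z}$-invariant $\tau$ with $\nabla_{\partial_t}\tau-c\tau=\sigma$. Applying Lemma~\ref{lem:ode} componentwise in a basis of $H^{k}(L)$, the smooth solutions of $\tau'-c\tau=\sigma$ on $\mathbb{R}$ are precisely $\tau(t)=e^{ct}\big(\int_{0}^{t}e^{-cs}\sigma(s)\,ds+K\big)$ with $K\in H^{k}(L)$ a free constant vector. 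Substituting $s\mapsto s+1$ in the integral and using $\varphi^{*}\sigma(s)=\sigma(s-1)$, a short computation shows that $\tau$ is $\mathbb{Z}$-invariant if and only if $\big[\varphi^{*}-e^{-c}\mathrm{Id}\big]K=-\varphi^{*}b(\sigma)$; since $\varphi^{*}$ commutes with $\varphi^{*}-e^{-c}\mathrm{Id}$, the choice $K:=-\varphi^{*}\xi$ solves this, so $[\sigma]=0$ in the quotient. The only delicate point, then, is fixing the free constant $K$ so as to meet the $\varphi^{*}$-twisted periodicity condition — this is precisely where the hypothesis $b(\sigma)\in\Ima[\varphi^{*}-e^{-c}\mathrm{Id}]$ enters; everything else (part $i)$ and the well-definedness check) is bookkeeping.
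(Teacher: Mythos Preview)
Your proposal is correct and follows essentially the same route as the paper: surjectivity via a bump function supported in $(0,1)$, and the isomorphism in $ii)$ by solving $\tau'-c\tau=\sigma$ with Lemma~\ref{lem:ode} and then reducing $\mathbb{Z}$-invariance of $\tau$ to the condition $[\varphi^{*}-e^{-c}\mathrm{Id}]K=-\varphi^{*}b(\sigma)$. The only difference is packaging: the paper proves the single ``if and only if'' statement that $\sigma_\gamma\in\mathrm{Im}(\nabla_{\partial_t}-c)$ exactly when $b(\sigma_\gamma)\in\mathrm{Im}[\varphi^{*}-e^{-c}\mathrm{Id}]$, whereas you split this into a separate well-definedness check (your integration-by-parts identity) and an injectivity check --- with the pleasant side effect that your forward direction avoids the ODE entirely.
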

	\begin{proof}
		To prove item $i)$, we pick $[\beta]\in H^{k}(L)$. Take a bump function $\psi(t)\in C^{\infty}[0,1]$ such that $\psi\equiv 0$ near $0$ and $\psi\equiv 0$ near $1$. Then $\psi(t)[\beta]\in\Gamma(H^{k}(L)\times[0,1])$ can be extended to a smooth $\mathbb{Z}$-invariant section of $H^{k}(L)\times\mathbb{R}$, i.e. a section $\sigma_{\gamma}$ of $\mathcal{H}^{k}$. We get that the image of 
		\[
		\left(\int_{0}^{1}e^{-ct}\psi(t)dt\right)^{-1}\sigma_{\gamma}\in\Gamma(\mathcal{H}^{k})
		\]
		under the map \eqref{eq:surjection} is exactly $[\beta]\in H^{k}(L)$. This shows that the map \eqref{eq:surjection} is surjective.
		
		To prove item $ii)$, we pick a section $\sigma_{\gamma}\in\Gamma(\mathcal{H}^{k})$ and find the precise conditions under which $\sigma_{\gamma}=\nabla_{\partial_t}\sigma_{\beta}-c\sigma_{\beta}$ for some $\sigma_{\beta}\in\Gamma(\mathcal{H}^{k})$. Fixing a basis $[\beta_1],\ldots,[\beta_n]$ of $H^{k}(L)$, we can write
		$
		\sigma_{\gamma}=\sum_{i}g_i(t)[\beta_i]
		$
		for some $g_i\in C^{\infty}(\mathbb{R})$, and we have the $\mathbb{Z}$-invariance condition
		\begin{equation}\label{eq:Z-invariance}
			[\varphi^{*}]\left(\sum_{i=1}^{n}g_i(t+1)[\beta_i]\right)=\sum_{i=1}^{n}g_i(t)[\beta_i].
		\end{equation}
		For there to exist $\sigma_{\beta}=\sum_{i}f_i(t)[\beta_i]$ such that $\sigma_{\gamma}=\nabla_{\partial_t}\sigma_{\beta}-c\sigma_{\beta}$, necessarily $g_i=f_{i}'-cf_i$. By Lemma~\ref{lem:ode}, we then have
		\[
		f_i(t)=e^{ct}\left(\int_{0}^{t}g_i(s)e^{-cs}ds+K_i\right)
		\]
		for some $K_i\in\mathbb{R}$. For $\sigma_{\beta}=\sum_{i}f_i(t)[\beta_i]$ to be a well-defined section of $\mathcal{H}^{k}$, it moreover needs to satisfy the $\mathbb{Z}$-invariance condition
		\begin{equation}\label{eq:invariance}
			[\varphi^{*}]\left(\sum_{i=1}^{n}e^{c(t+1)}\left(\int_{0}^{t+1}g_i(s)e^{-cs}ds+K_i\right)[\beta_i]\right)=\sum_{i=1}^{n}e^{ct}\left(\int_{0}^{t}g_i(s)e^{-cs}ds+K_i\right)[\beta_i].
		\end{equation}
		We now inspect the left hand side of \eqref{eq:invariance}. Note that
		\begin{align*}
			\int_{0}^{t+1}g_i(s)e^{-cs}ds&=\int_{0}^{1}g_i(s)e^{-cs}ds+\int_{1}^{t+1}g_i(s)e^{-cs}ds\\
			&=\int_{0}^{1}g_i(s)e^{-cs}ds+\int_{0}^{t}g_{i}(s+1)e^{-cs-c}ds.
		\end{align*}
		Invoking the $\mathbb{Z}$-invariance \eqref{eq:Z-invariance}, we see that the left hand side of \eqref{eq:invariance} is given by
		\begin{align*}
			&[\varphi^{*}]\left(\sum_{i=1}^{n}e^{c(t+1)}\left(\int_{0}^{1}g_i(s)e^{-cs}ds+K_i\right)[\beta_i]\right)+\sum_{i=1}^{n}e^{c(t+1)}\left(\int_{0}^{t}g_{i}(s)e^{-cs-c}ds\right)[\beta_i]\\
			&=[\varphi^{*}]\left(\sum_{i=1}^{n}e^{c(t+1)}\left(\int_{0}^{1}g_i(s)e^{-cs}ds+K_i\right)[\beta_i]\right)+\sum_{i=1}^{n}e^{ct}\left(\int_{0}^{t}g_{i}(s)e^{-cs}ds\right)[\beta_i].
		\end{align*}
		Hence, the requirement \eqref{eq:invariance} is equivalent with
		\[
		[\varphi^{*}]\left(\sum_{i=1}^{n}\left(\int_{0}^{1}g_i(s)e^{-cs}ds+K_i\right)[\beta_i]\right)=e^{-c}\sum_{i=1}^{n}K_{i}[\beta_i],
		\]
		or put differently,
		\begin{align*}
			[\varphi^{*}-e^{-c}\mathrm{Id}]\left(\sum_{i=1}^{n}K_{i}[\beta_i]\right)&=-[\varphi^{*}]\left(\sum_{i=1}^{n}\left(\int_{0}^{1}g_i(s)e^{-cs}ds\right)[\beta_i]\right)\\
			&=-[\varphi^{*}]\left(\int_{0}^{1}e^{-cs}\sigma_{\gamma}(s)ds\right).
		\end{align*}
		It follows that $\sigma_{\gamma}=\nabla_{\partial_t}\sigma_{\beta}-c\sigma_{\beta}$ for some $\sigma_{\beta}\in\Gamma(\mathcal{H}^{k})$ exactly when
		\[
		-[\varphi^{*}]\left(\int_{0}^{1}e^{-cs}\sigma_{\gamma}(s)ds\right)\in\mathrm{Im}\left([\varphi^{*}-e^{-c}\mathrm{Id}]:H^{k}(L)\rightarrow H^{k}(L)\right),
		\]
		which in turn is equivalent with
		\[
		\int_{0}^{1}e^{-cs}\sigma_{\gamma}(s)ds\in\mathrm{Im}\left([\varphi^{*}-e^{-c}\mathrm{Id}]:H^{k}(L)\rightarrow H^{k}(L)\right).
		\]
		This shows that the linear surjection
		\[
		\Gamma(\mathcal{H}^{k})\rightarrow H^{k}(L):\sigma_{\gamma}\mapsto\int_{0}^{1}e^{-cs}\sigma_{\gamma}(s)ds.
		\]
		considered in part $i)$ of the proposition descends to an isomorphism
		\[
		\frac{\Gamma(\mathcal{H}^{k})}{\left\{\nabla_{\partial_t}\sigma_{\beta}-c\sigma_{\beta}: \sigma_{\beta}\in\Gamma(\mathcal{H}^{k})\right\}}\overset{\sim}{\longrightarrow} \mathrm{coker}\left([\varphi^{*}-e^{-c}\mathrm{Id}]:H^{k}(L)\rightarrow H^{k}(L)\right).\qedhere
		\]
	\end{proof}
	
	Combining Cor.~\ref{cor:nabla} with Prop.~\ref{prop:ker} and Prop.~\ref{prop:coker}, we obtain the following. We denote
	\begin{align*}
	&K^{\bullet}_c:=\ker\left([\varphi^{*}-e^{-c}\mathrm{Id}]:H^{\bullet}(L)\rightarrow H^{\bullet}(L)\right),\\
	&C^{\bullet}_c:=\mathrm{coker}\left([\varphi^{*}-e^{-c}\mathrm{Id}]:H^{\bullet}(L)\rightarrow H^{\bullet}(L)\right).
	\end{align*}
	\begin{cor}\label{cor:computed}
		The Morse-Novikov cohomology $H^{\bullet}_{cdt}(\mathbb{T}_{\varphi})$ for $c\neq 0$ is given as follows.
		\begin{enumerate}[i)]
			\item $H^{0}_{cdt}(\mathbb{T}_{\varphi})=0$,
			\item For $k\geq 1$, we have a short exact sequence
			\begin{align}\label{eq:seq-C-K}
				0\longrightarrow C_{c}^{k-1}\longrightarrow H^{k}_{cdt}(\mathbb{T}_{\varphi})\longrightarrow K_{c}^{k}\longrightarrow 0.
			\end{align}
		\end{enumerate}
	\end{cor}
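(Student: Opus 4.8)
The plan is to obtain the statement by simply assembling the three results established above, namely Cor.~\ref{cor:nabla}, Prop.~\ref{prop:ker} and Prop.~\ref{prop:coker}; no new computation is needed. Item i) is literally item i) of Cor.~\ref{cor:nabla}, so there is nothing to do there.

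For item ii), I would take as starting point the short exact sequence furnished by Cor.~\ref{cor:nabla} ii),
\[
0\longrightarrow\frac{\Gamma(\mathcal{H}^{k-1})}{\left\{\nabla_{\partial_t}\sigma_{\beta}-c\sigma_{\beta}:\sigma_{\beta}\in\Gamma(\mathcal{H}^{k-1})\right\}}\longrightarrow H^{k}_{cdt}(\mathbb{T}_{\varphi})\longrightarrow\left\{\sigma_{\beta}\in\Gamma(\mathcal{H}^{k}):\nabla_{\partial_t}\sigma_{\beta}=c\sigma_{\beta}\right\}\longrightarrow 0,
\]
and replace its two outer terms by the isomorphic ones coming from the other propositions. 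Concretely, Prop.~\ref{prop:coker} ii), applied with $k$ replaced by $k-1$, identifies the quotient on the left with $C_{c}^{k-1}=\mathrm{coker}\big([\varphi^{*}-e^{-c}\mathrm{Id}]:H^{k-1}(L)\to H^{k-1}(L)\big)$, while Prop.~\ref{prop:ker} identifies the space of $c$-eigensections on the right with $K_{c}^{k}=\ker\big([\varphi^{*}-e^{-c}\mathrm{Id}]:H^{k}(L)\to H^{k}(L)\big)$. Composing the injection above with the inverse of the first isomorphism, and the surjection above with the second isomorphism, turns the displayed sequence into exactly the short exact sequence \eqref{eq:seq-C-K}.

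Since we are only transporting an abstract short exact sequence along isomorphisms of its two end terms — leaving the middle term $H^{k}_{cdt}(\mathbb{T}_{\varphi})$ untouched — exactness is automatically preserved and there is no naturality or compatibility condition to check. Consequently there is no genuine obstacle in this step: all the real work has already been done in Cor.~\ref{cor:nabla} (which rests on the foliated long exact sequence of \S\ref{subsec:exact-seq}), in Prop.~\ref{prop:ker}, and in Prop.~\ref{prop:coker} (which in turn relies on the ODE solved in Lemma~\ref{lem:ode}). As remarked in the introduction, one could instead prove this corollary directly via a Mayer--Vietoris argument on the mapping torus; the present route through foliated cohomology is preferred because it makes the appearance of the operator $\varphi^{*}-e^{-c}\mathrm{Id}$ transparent.
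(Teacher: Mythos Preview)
Your proposal is correct and matches the paper's approach exactly: the paper simply states that Cor.~\ref{cor:computed} follows by combining Cor.~\ref{cor:nabla} with Prop.~\ref{prop:ker} and Prop.~\ref{prop:coker}, and your write-up makes explicit precisely this transport of the short exact sequence along the two end-term isomorphisms.
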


	
	\begin{remark}
	Cor.~\ref{cor:computed} is related with various results in the literature that were obtained via other methods, like Mayer-Vietoris techniques or Novikov homology. The novelty of our result lies in the fact that we use a geometric approach, relying on foliation theory instead.
		\begin{enumerate}
			\item Moroianu and Pilca \cite{Moroianu} have showed the following. Viewing a matrix $A\in\mathrm{SL}_{2}(\mathbb{Z})$ as a diffeomorphism of the torus $\mathbb{T}^{2}$, one can consider the mapping torus $\mathbb{T}_{A}\rightarrow S^{1}$. It is shown in \cite[Lemma~4.1]{Moroianu} that the first Morse-Novikov cohomology $H^{1}_{cdt}(\mathbb{T}_{A})$ for $c\in\mathbb{R}_{0}$ is non-zero exactly when $e^{c}$ is an eigenvalue of $A$. This is done by relating $H^{1}_{cdt}(\mathbb{T}_{A})$ to certain $2$-dimensional representations of the fundamental group $\pi_1(\mathbb{T}_{A})$.
			We can recover this result using Cor.~\ref{cor:computed} as follows. Since $c\neq 0$, we have
			\[
			\mathrm{coker}\left(\mathrm{Id}-e^{-c}\mathrm{Id}:H^{0}(\mathbb{T}^{2})\rightarrow H^{0}(\mathbb{T}^{2})\right)=\frac{\mathbb{R}}{(1-e^{-c})\cdot\mathbb{R}}=0.
			\]
			Hence by Cor.~\ref{cor:computed}, we have that
			\[
			H^{1}_{cdt}(\mathbb{T}_{A})\cong\ker\left(A^{T}-e^{-c}\mathrm{Id}:H^{1}(\mathbb{T}^{2})\rightarrow H^{1}(\mathbb{T}^{2})\right).
			\]
			In particular, $H^{1}_{cdt}(\mathbb{T}_{A})$ is non-zero exactly when $e^{-c}$ is an eigenvalue of $A^{T}$. Since $A$ and $A^{T}$ have the same eigenvalues, this is equivalent with $e^{-c}$ being an eigenvalue of $A$. Since $\det A=1$, this in turn is equivalent with $e^{c}$ being an eigenvalue of $A$.
			\item Pajitnov \cite{Pajitnov} showed that all Morse-Novikov cohomology groups $H^{k}_{cdt}(\mathbb{T}_{\varphi})$ vanish when $e^{-c}$ is transcendental. This is done by proving that the dimension of $ H^{k}_{cdt}(\mathbb{T}_{\varphi})$ agrees with the $k$-th Novikov Betti number \cite{Novikov} of the closed one-form $cdt$, which is zero whenever the closed one-form in question is nowhere-vanishing.
			
			This result is consistent with Cor.~\ref{cor:computed}. To see this, it suffices to argue that a transcendental number $e^{-c}$ cannot be an eigenvalue of $[\varphi^{*}]:H^{k}(L)\rightarrow H^{k}(L)$ for any $k\in\mathbb{N}$. It then follows that the maps $[\varphi^{*}-e^{-c}\mathrm{Id}]:H^{k}(L)\rightarrow H^{k}(L)$ are isomorphisms, hence their kernels and cokernels vanish. To show that the eigenvalues of the map $[\varphi^{*}]:H^{k}(L)\rightarrow H^{k}(L)$ are algebraic numbers, it suffices to show the same for the map in homology with real coefficients $[\varphi_{*}]_{\mathbb{R}}:H_{k}(L;\mathbb{R})\rightarrow H_{k}(L;\mathbb{R})$. Indeed, with respect to dual bases for $H_{k}(L;\mathbb{R})$ and $H^{k}(L)=H_{k}(L;\mathbb{R})^{*}$, the matrices of $[\varphi^{*}]$ and $[\varphi_{*}]_{\mathbb{R}}$ are transposes of each other hence they share the same eigenvalues. But the fact that the eigenvalues of $[\varphi_{*}]_{\mathbb{R}}:H_{k}(L;\mathbb{R})\rightarrow H_{k}(L;\mathbb{R})$ are algebraic numbers follows immediately from naturality of the universal coefficient theorem
			\[
			\begin{tikzcd}
				&H_{k}(L;\mathbb{Z})\otimes\mathbb{R}\arrow[r,"\sim"]\arrow{d}{[\varphi_{*}]_{\mathbb{Z}}\otimes\mathrm{Id}}&H_{k}(L;\mathbb{R})\arrow{d}{[\varphi_{*}]_{\mathbb{R}}}\\
				&H_{k}(L;\mathbb{Z})\otimes\mathbb{R}\arrow[r,"\sim"]&H_{k}(L;\mathbb{R})
			\end{tikzcd}.
			\]
			This implies that the matrix of $[\varphi_{*}]_{\mathbb{R}}:H_{k}(L;\mathbb{R})\rightarrow H_{k}(L;\mathbb{R})$, with respect to a basis coming from generators for the free part of $H_{k}(L;\mathbb{Z})$, has integer coefficients. Consequently, the roots of its characteristic polynomial are algebraic numbers.

			Pajitnov \cite{Pajitnov} also gives an expression for the dimension of $H^{k}_{cdt}(\mathbb{T}_{\varphi})$ when $e^{-c}$ is algebraic, which is however not explicit. He describes the dimension in terms of certain unspecified polynomials appearing in a decomposition of a rational cohomology group viewed as a finitely generated module over the principal ideal ring $\mathbb{Q}[z,z^{-1}]$. Cor.~\ref{cor:computed} has the advantage of being explicit, simple and geometric.
			
			\item Otiman \cite{Otiman} has computed the Morse-Novikov cohomology groups $H^{\bullet}_{\theta}(\mathcal{S})$ of Inoue surfaces $\mathcal{S}$, with respect to a closed one-form $\theta$ which is the Lee form for a certain natural locally conformal symplectic structure. Since the manifolds $\mathcal{S}$ in question are mapping tori $\mathcal{S}\rightarrow S^{1}$ and the Lee form $\theta$ is basic, this could also be done using Cor.~\ref{cor:computed}. The original argument in  \cite{Otiman} uses the Mayer-Vietoris sequence for Morse-Novikov cohomology instead.  
		\end{enumerate}
	\end{remark}
	
	\begin{ex}
		Assume that $L$ is orientable and that there is a Riemannian metric $g$ on $L$ such that the diffeomorphism $\varphi$ belongs to the isometry group $\text{Isom}(L,g)$. In that case, all Morse-Novikov cohomology groups $H^{\bullet}_{cdt}(\mathbb{T}_{\varphi})$ for $c\neq 0$ vanish because the maps
		\[
		[\varphi^{*}-e^{-c}\mathrm{Id}]:H^{k}(L)\rightarrow H^{k}(L)
		\]
		are isomorphisms for all $k\geq 0$. Indeed, assume that $[\alpha]\in H^{k}(L)$ would be a non-zero class in the kernel of $[\varphi^{*}-e^{-c}\mathrm{Id}]$. Let $\alpha\in\Omega^{k}(L)$ be the unique harmonic representative of $[\alpha]$. 
		Since $\varphi$ is an isometry, it follows that $\varphi^{*}\alpha$ is the unique harmonic representative of the class $[e^{-c}\alpha]$, hence $\varphi^{*}\alpha=e^{-c}\alpha$. Denoting by $\langle\cdot,\cdot\rangle_{g}$ the $L^{2}$-inner product on $\Omega^{k}(L)$, we get
		\[
		\langle\alpha,\alpha\rangle_g=\langle\varphi^{*}\alpha,\varphi^{*}\alpha\rangle_g=e^{-2c}\langle\alpha,\alpha\rangle_g.
		\]
		Since $\alpha$ is non-zero, this would force that $c=0$, hence we reach a contradiction.
	\end{ex}

	\section{Rigidity of Lie affine foliations}\label{sec:deformations}
	
	This section is devoted to rigidity of Lie affine foliations. Our motivation is the paper \cite{Lie} by El Kacimi-Guasp-Nicolau, where the authors study deformations of a given Lie foliation within the class of Lie foliations. They show that a Lie foliation on a compact manifold is rigid, provided that a certain cohomology group $H^{1}(\mathcal{A})$ vanishes. The aim of our paper is to complement their result by looking for situations in which this rigidity theorem applies. Since the authors of \cite{Lie} studied the cohomology  $H^{1}(\mathcal{A})$ in detail for Lie foliations modeled on abelian Lie algebras, we will focus on Lie foliations modeled on the simplest non-abelian Lie algebra $\mathfrak{aff}(1)$. We refer to them as \emph{Lie affine foliations}. We study the cohomology group $H^{1}(\mathcal{A})$ in this situation, showing that it can be expressed in terms of Morse-Novikov cohomology. We give precise criteria for its vanishing, which leads to many examples of rigid Lie affine foliations. We are particularly interested in a suitable class of Lie affine foliations on mapping tori which we call \emph{model Lie affine foliations}. Specializing our rigidity result to this class, we obtain in particular that every Lie affine foliation on a compact, connected, oriented manifold of dimension $3$ or $4$ is rigid among nearby Lie foliations.

	\subsection{Deformations of Lie foliations}\label{subsec:def-lie}
	In this subsection, we recall some background about deformations of Lie foliations. The material is taken from \cite{Lie}, where one actually studies deformations of a class of transversely homogeneous foliations that includes Lie foliations.
	
	\begin{defi}
		Let $M$ be a manifold and $\mathfrak{g}$ a Lie algebra. A Maurer-Cartan form with values in $\mathfrak{g}$ is a differential form $\omega\in\Omega^{1}(M,\mathfrak{g})$ satisfying the Maurer-Cartan equation
		\[
		d\omega+\frac{1}{2}[\omega,\omega]=0.
		\]
		Assume moreover that $\omega$ is non-singular, i.e. $\omega_x:T_{x}M\rightarrow\mathfrak{g}$ is surjective for all $x\in M$. Then  $\ker\omega$ integrates to a foliation $\mathcal{F}$, which we call the Lie $\mathfrak{g}$-foliation defined by $\omega\in\Omega^{1}(M,\mathfrak{g})$.
	\end{defi}
	
	From now on, we fix a basis $\{e_1,\ldots,e_m\}$ for the Lie algebra $\mathfrak{g}$ and denote the associated structure constants by $K_{ij}^{k}$. The Lie $\mathfrak{g}$-foliation $\mathcal{F}$ is given by
	one-forms $\omega_1,\ldots,\omega_m\in\Omega^{1}(M)$ that are linearly independent at every point and satisfy
	\begin{equation}\label{eq:lie}
		d\omega_k=-\frac{1}{2}\sum_{i,j=1}^{m}K_{ij}^{k}\omega_i\wedge\omega_j.
	\end{equation}
	
	\noindent\fbox{%
		\parbox{\textwidth}{%
			For us, a Lie foliation will be a pair $(\mathcal{F},\omega)$ where $\omega\in\Omega^{1}(M,\mathfrak{g})$ is a non-singular Maurer-Cartan form and $\mathcal{F}$ is the foliation defined by $\omega$. We emphasize that the Maurer-Cartan form will be considered as part of the data of a Lie foliation.
		}%
	}
	\vspace{0.2cm}
	
	We now specify what we mean by deforming $(\mathcal{F},\omega)$ as a Lie foliation. 
	
	\begin{defi}\label{def:Lie-fol}
		Let $(\mathcal{F},\omega)$ be a Lie $\mathfrak{g}$-foliation defined by $\omega=(\omega_1,\ldots,\omega_m)\in\Omega^{1}(M,\mathfrak{g})$.
		\begin{enumerate}[i)]
			\item A smooth deformation $(\mathcal{F}_t,\omega_t)$ of $(\mathcal{F},\omega)$ parametrized by an interval $I\subset\mathbb{R}$ around $0$ is determined by a smoothly varying family of one-forms $\omega_1(t),\ldots,\omega_m(t)$ that are linearly independent at every point and a set of smooth functions $K_{ij}^{k}(t)$ such that
			\begin{equation*}
				d\omega_k(t)=-\frac{1}{2}\sum_{i,j=1}^{m}K_{ij}^{k}(t)\omega_i(t)\wedge\omega_j(t),
			\end{equation*}
			and $K_{ij}^{k}(t)$ are the structure constants of a Lie algebra $\mathfrak{g}_t$. We also require $\omega_j(0)=\omega_j$.
			\item The Lie foliation $(\mathcal{F},\omega)$ is rigid if every smooth deformation $(\mathcal{F}_t,\omega_t)$ of $(\mathcal{F},\omega)$ is trivial. That is, shrinking the interval $I$ if necessary, there is a smooth family of diffeomorphisms $\phi_t:M\rightarrow M$ and a family of Lie algebra isomorphisms $\nu_t:\mathfrak{g}\rightarrow\mathfrak{g}_t$ such that $$\phi_t^{*}\omega_t=\nu_t\circ\omega.$$
			We require moreover that $\phi_0:M\rightarrow M$ and $\nu_0:\mathfrak{g}\rightarrow\mathfrak{g}$ are the identity maps.
		\end{enumerate} 
	\end{defi}

	\begin{remark}
	We emphasize that we will be deforming the Maurer-Cartan form $\omega\in\Omega^{1}(M,\mathfrak{g})$, rather than just its associated foliation $\mathcal{F}$. In particular, the rigidity problem of the Lie $\mathfrak{g}$-foliation $(\mathcal{F},\omega)$ is very different from the rigidity problem of the underlying foliation $\mathcal{F}$. This will be illustrated in \S\ref{subsec:model} where we construct Lie $\mathfrak{aff}(1)$-foliations $(\mathcal{F},\omega)$ that are rigid when deformed as Lie foliations, whereas $\mathcal{F}$ is not rigid when deformed merely as a foliation.
	\end{remark}

	The deformation problem of a Lie foliation $(\mathcal{F},\omega)$ is governed at the infinitesimal level by a suitable cochain complex $\big(\mathcal{A}^{\bullet},D\big)$ which we now recall. Below, we denote by $\{\theta_1,\ldots,\theta_m\}$ the basis of $\mathfrak{g}^{*}$ dual to the basis $\{e_1,\ldots,e_m\}$ of $\mathfrak{g}$.
	\begin{itemize}
		\item We introduce an operator $\widehat{d}_{M}:\Omega^{r}(M)^{\times m}\rightarrow\Omega^{r+1}(M)^{\times m}$ defined as follows. Given $\sigma=(\sigma_1,\ldots,\sigma_m)\in\Omega^{r}(M)^{\times m}$, the $k$-th component of $\widehat{d}_{M}\sigma\in\Omega^{r+1}(M)^{\times m}$ is
		\[
		\big(\widehat{d}_{M}\sigma\big)_{k}:=d\sigma_k+\sum_{i,j=1}^{m}K_{ij}^{k}\omega_i\wedge\sigma_j.
		\]
		\item We also define an operator $\widehat{d}_{\mathfrak{g}}:\wedge^{r}\mathfrak{g}^{*}\otimes\mathfrak{g}\rightarrow\wedge^{r+1}\mathfrak{g}^{*}\otimes\mathfrak{g}$ as follows. If $\psi=\sum_{k=1}^{m}\psi_k\otimes e_k$ for $\psi_k\in\wedge^{r}\mathfrak{g}^{*}$, then we set
		\[
		\widehat{d}_{\mathfrak{g}}\psi:=\sum_{k=1}^{m}\Big(d\psi_k+\sum_{i,j=1}^{m}K_{ij}^{k}\theta_i\wedge\psi_j\Big)\otimes e_k.
		\]
		Here $d$ is the exterior derivative on the simply connected Lie group $G$ integrating $\mathfrak{g}$.
		\item At last, we define a linear map $\mathcal{R}:\wedge^{r}\mathfrak{g}^{*}\otimes\mathfrak{g}\rightarrow\Omega^{r}(M)^{\times m}$ by setting
		\[
		\left(\mathcal{R}(\theta_{j_1}\wedge\cdots\wedge\theta_{j_r}\otimes e_i)\right)_{k}:=\delta_{ik}\omega_{j_1}\wedge\cdots\wedge\omega_{j_r}.
		\]
	\end{itemize}
	
	\begin{defi}\label{def:A}
		For all $r\in\mathbb{N}$, we set $$\mathcal{A}^{r}:=\Omega^{r}(M)^{\times m}\oplus\big(\wedge^{r+1}\mathfrak{g}^{*}\otimes\mathfrak{g}\big).$$
		We get a differential $D:\mathcal{A}^{r}\rightarrow\mathcal{A}^{r+1}$, defined by
		\[
		D(\sigma,\psi)=\big(\widehat{d}_{M}\sigma-\mathcal{R}(\psi),-\widehat{d}_{\mathfrak{g}}\psi\big)
		\]
		for $\sigma\in\Omega^{r}(M)^{\times m}$ and $\psi\in\wedge^{r+1}\mathfrak{g}^{*}\otimes\mathfrak{g}$. 
	\end{defi}
	
	Given the Lie $\mathfrak{g}$-foliation $(\mathcal{F},\omega)$ defined by $\omega_1,\ldots,\omega_m\in\Omega^{1}(M)$ satisfying \eqref{eq:lie}, a smooth deformation $(\mathcal{F}_t,\omega_t)$ of $(\mathcal{F},\omega)$ is specified by two pieces of data:
	\begin{enumerate}[i)]
		\item A smooth path $\sigma(t)=\big(\sigma_1(t),\ldots,\sigma_m(t)\big)$ in $\Omega^{1}(M)^{\times m}$ with $\sigma(0)=0$,
		\item A smooth path $\psi(t)$ in $\wedge^{2}\mathfrak{g}^{*}\otimes\mathfrak{g}$ given by
		\[
		\psi(t)=-\frac{1}{2}\sum_{i,j,k=1}^{m}C_{ij}^{k}(t)\theta_i\wedge\theta_j\otimes e_k\hspace{1cm}\text{with}\ C_{ij}^{k}(0)=0.
		\]
	\end{enumerate}
	We then require that
	\begin{equation}\label{eq:1}
		d\big(\omega_k+\sigma_k(t)\big)+\frac{1}{2}\sum_{i,j=1}^{m}\left(K_{ij}^{k}+C_{ij}^{k}(t)\right)\big(\omega_i+\sigma_i(t)\big)\wedge\big(\omega_j+\sigma_j(t)\big)=0.
	\end{equation}
	We also need that $L_{ij}^{k}(t):=K_{ij}^{k}+C_{ij}^{k}(t)$ are the structure constants of a Lie algebra, i.e. the Jacobi identity needs to be satisfied. This amounts to
	\begin{align}\label{eq:2}
		&\sum_{i=1}^{m}\big(L_{ij}^{k}(t)L_{rs}^{i}(t)\big)+L_{ir}^{k}(t)L_{sj}^{i}(t)+L_{is}^{k}(t)L_{jr}^{i}(t)\big)=0.
	\end{align}
	The infinitesimal deformation coming from the path $\big(\sigma(t),\psi(t)\big)$ satisfies the linearization of the equations \eqref{eq:1} and \eqref{eq:2}, and this turns out to be
	\[
	D\big(\dt{\sigma(0)},\dt{\psi}(0)\big)=0.
	\]
	Moreover, if the path $\big(\sigma(t),\psi(t)\big)$ defines a trivial deformation of $(\mathcal{F},\omega)$ as in $ii)$ of Def.~\ref{def:Lie-fol}, then the associated infinitesimal deformation $\big(\dt{\sigma(0)},\dt{\psi}(0)\big)$ is exact in $\big(\mathcal{A}^{\bullet},D\big)$. Hence, vanishing of the first cohomology group $H^{1}(\mathcal{A})$ can be interpreted heuristically as the infinitesimal requirement for rigidity of the Lie $\mathfrak{g}$-foliation $(\mathcal{F},\omega)$. When the manifold $M$ is compact, this can be turned into an actual rigidity statement \cite[Thm.~2]{Lie}.
	
	\begin{thm}\label{thm:rigidity}
		Let $M$ be a compact manifold and $(\mathcal{F},\omega)$ a Lie $\mathfrak{g}$-foliation on $M$. If $H^{1}(\mathcal{A})$ vanishes, then $(\mathcal{F},\omega)$ is rigid.
	\end{thm}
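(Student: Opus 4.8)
The plan is to prove this rigidity statement by the path method (a Moser-type argument) combined with Kodaira--Spencer deformation theory. Fix a smooth deformation $(\cF_t,\omega_t)$ of $(\cF,\omega)$, encoded as in the discussion before the theorem by a path $\bigl(\sigma(t),\psi(t)\bigr)$ with $\sigma(0)=0$, $\psi(0)=0$ and time-dependent structure constants $L^k_{ij}(t)=K^k_{ij}+C^k_{ij}(t)$. For each parameter value $t$ the forms $\omega_k(t)=\omega_k+\sigma_k(t)$ together with the constants $L^k_{ij}(t)$ define a Lie $\g_t$-foliation, so the recipe of Definition~\ref{def:A} produces a complex $\bigl(\cA_t^\bullet,D_t\bigr)$, with $\bigl(\cA_0^\bullet,D_0\bigr)=\bigl(\cA^\bullet,D\bigr)$. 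Differentiating the structure equations \eqref{eq:1} and \eqref{eq:2} in $t$ shows, in the usual way, that the velocity $\bigl(\dot\sigma(t),\dot\psi(t)\bigr)$ is a $1$-cocycle of $\bigl(\cA_t^\bullet,D_t\bigr)$. I would then express it as a coboundary $D_t\bigl(\xi(t),\zeta(t)\bigr)$ with $\bigl(\xi(t),\zeta(t)\bigr)\in\cA_t^0$ depending smoothly on $t$, and integrate $\xi(t)$ to a path of diffeomorphisms of $M$ and $\zeta(t)$ to a path of Lie algebra isomorphisms that together trivialize the deformation.

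The analytic input is Hodge theory on the compact manifold $M$. The complex $\bigl(\cA_t^\bullet,D_t\bigr)$ is the mapping cone of the cochain map $\mathcal{R}_t\colon\bigl(\wedge^\bullet\g^*\otimes\g,\widehat{d}_{\g,t}\bigr)\to\bigl(\Omega^\bullet(M)^{\times m},\widehat{d}_{M,t}\bigr)$, so its cohomology fits in a long exact sequence involving the finite-dimensional Lie algebra cohomology $H^\bullet(\g_t;\g_t)$ and the twisted cohomology $H^\bullet\bigl(\Omega^\bullet(M)^{\times m},\widehat{d}_{M,t}\bigr)$, which is finite-dimensional because $\widehat{d}_{M,t}$ is the covariant exterior derivative of a flat connection on the trivial bundle $\underline{\g}$ over the compact $M$. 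The Laplacian $\Box_t$ of $D_t$ (with respect to a Riemannian metric on $M$ and an inner product on $\g$) is elliptic on the infinite-dimensional summand $\Omega^\bullet(M)^{\times m}$ and hence Fredholm, so Hodge theory applies and the Green operator of $D_t$ depends smoothly on $t$. Since $D_t$ varies smoothly, $\dim H^1(\cA_t)$ is upper semicontinuous in $t$; as $H^1(\cA)=H^1(\cA_0)=0$ by hypothesis, we get $H^1(\cA_t)=0$ for $t$ in a possibly smaller neighbourhood of $0$. Applying the Green operator to the cocycle $\bigl(\dot\sigma(t),\dot\psi(t)\bigr)$ therefore yields a primitive $\bigl(\xi(t),\zeta(t)\bigr)=\bigl(u(t),B(t)\bigr)\in C^\infty(M,\g)\oplus\mathrm{End}(\g)$, smooth in $t$, solving $D_t\bigl(\xi(t),\zeta(t)\bigr)=-\bigl(\dot\sigma(t),\dot\psi(t)\bigr)$ (the sign chosen so that the Moser equation below closes).

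It remains to integrate. The endomorphism $B(t)\in\wedge^1\g^*\otimes\g\cong\mathrm{End}(\g)$ generates, via the linear ODE $\dot\nu_t=B(t)\circ\nu_t$ with $\nu_0=\mathrm{Id}$, a path $\nu_t\in\mathrm{GL}(\g)$; the $\wedge^\bullet\g^*\otimes\g$-component of the cohomological equation, $-\widehat{d}_{\g,t}B(t)=-\dot\psi(t)$, is precisely the infinitesimal condition guaranteeing that $\nu_t$ is a Lie algebra isomorphism $\g\to\g_t$. The function $u(t)\in C^\infty(M,\g)$ is lifted to a time-dependent vector field $X_t$ on $M$ by choosing, smoothly in $t$, a splitting of $0\to T\cF_t\to TM\xrightarrow{\omega_t}\underline{\g}\to 0$ (possible since $\omega_t$ is non-singular) and setting $\omega_t(X_t)=u(t)$; by compactness of $M$ the flow $\phi_t$ of $X_t$ is defined on the whole parameter interval, with $\phi_0=\mathrm{Id}$. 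The particular splitting is immaterial: for any lift, the Cartan formula together with the Maurer--Cartan equation for $\omega_t$ gives $\pounds_{X_t}\omega_t=\widehat{d}_{M,t}\bigl(\omega_t(X_t)\bigr)$. A Moser-type computation then shows $\tfrac{d}{dt}\bigl(\nu_t^{-1}\circ\phi_t^*\omega_t\bigr)=0$: differentiating and using $B(t)\circ\omega_t=\mathcal{R}_t(B(t))$ reduces it to $\nu_t^{-1}\circ\phi_t^*\bigl(\dot\sigma(t)+\widehat{d}_{M,t}u(t)-\mathcal{R}_t(B(t))\bigr)$, and $\dot\sigma(t)+\widehat{d}_{M,t}u(t)-\mathcal{R}_t(B(t))=0$ is exactly the $\Omega^\bullet(M)^{\times m}$-component of the cohomological equation. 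Hence $\phi_t^*\omega_t=\nu_t\circ\omega$ for all $t$ in the interval, and since $\phi_0=\mathrm{Id}$, $\nu_0=\mathrm{Id}$, this exhibits the deformation as trivial in the sense of Definition~\ref{def:Lie-fol}.

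The main obstacle is the analytic heart of the second paragraph: one must verify that the Laplacians $\Box_t$ of $D_t$ form a smooth family of Fredholm operators on suitable Sobolev completions, so that $\dim H^1(\cA_t)$ is genuinely upper semicontinuous and the Green operators vary smoothly in $t$ — this is what makes the primitive $\bigl(\xi(t),\zeta(t)\bigr)$ smooth in $t$, and hence the flow $\phi_t$ and the path $\nu_t$ well-defined. Everything else — the cocycle identity for the velocity, the smooth choice of splitting, and the Moser bookkeeping identifying the two components of the cohomological equation with the defining ODEs for $\nu_t$ and $\phi_t$ — is essentially formal, modulo keeping track of the sign conventions in $\widehat{d}_M$, $\widehat{d}_{\g}$, $\mathcal{R}$ and $[\,\cdot\,,\cdot\,]$.
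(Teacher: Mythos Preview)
The paper does not prove Theorem~\ref{thm:rigidity}: it is quoted as \cite[Thm.~2]{Lie} and used as a black box, so there is no in-paper argument to compare your proposal against.

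That said, your outline is the standard Kodaira--Spencer/Moser scheme, and it is essentially the approach of \cite{Lie}. The formal steps you record all check out: differentiating \eqref{eq:1}--\eqref{eq:2} does give $D_t(\dot\sigma(t),\dot\psi(t))=0$; the Cartan--Maurer--Cartan computation yields $\pounds_{X_t}\omega_t=\widehat{d}_{M,t}(\omega_t(X_t))$ regardless of the choice of lift $X_t$; the identity $B(t)\circ\omega_t=\mathcal{R}_t(B(t))$ is exactly how $\mathcal{R}$ is defined; and unwinding $\widehat{d}_{\mathfrak g,t}$ on $\mathfrak g^*\otimes\mathfrak g$ gives the Chevalley--Eilenberg differential for the adjoint representation, so $\widehat{d}_{\mathfrak g,t}B(t)=\dot\psi(t)$ is precisely the infinitesimal condition for $\nu_t$ to remain a Lie algebra isomorphism. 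Your sign bookkeeping in the Moser derivative is consistent.

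You correctly flag the only genuine gap: the analytic package. The complex $(\mathcal{A}_t^\bullet,D_t)$ is not an elliptic complex in the literal sense because of the finite-dimensional summand $\wedge^{\bullet+1}\mathfrak g^*\otimes\mathfrak g$, so one cannot simply invoke Hodge theory for $D_t$. In \cite{Lie} this is handled by working instead with the elliptic complex $(\Omega^\bullet(M)^{\times m},\widehat{d}_{M,t})$ and treating the finite-dimensional Lie-algebra piece separately via the mapping-cone long exact sequence; the upper semicontinuity of $\dim H^1(\mathcal{A}_t)$ and the smooth dependence of primitives then follow from the corresponding facts for $\widehat{d}_{M,t}$ together with elementary linear algebra on the finite-dimensional part. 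If you want a self-contained argument, that is the route to take rather than attempting to build a Green operator for $D_t$ directly.
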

	
	The aim of this paper is to find new examples of rigid Lie foliations using Thm.~\ref{thm:rigidity}. Since the cohomology $H^{1}(\mathcal{A})$ has been studied when the Lie algebra $\mathfrak{g}$ is abelian \cite[\S 6.4]{Lie}, we will look at Lie foliations modeled on the simplest non-abelian Lie algebra $\mathfrak{g}=\mathfrak{aff}(1)$. It turns out that their deformation cohomology $H^{1}(\mathcal{A})$ can be expressed in terms of Morse-Novikov cohomology, which is where the results of \S\ref{sec:one} come into play.

	\subsection{Deformations of Lie affine foliations}
	
	From now on, $(\mathcal{F},\omega)$ is a Lie $\mathfrak{aff}(1)$-foliation on $M$. In this subsection, we will specialize Thm.~\ref{thm:rigidity} to the case in which $\mathfrak{g}=\mathfrak{aff}(1)$. 
	
	Fix a basis $\{e_1,e_2\}$ of $\mathfrak{aff}(1)$ such that $[e_1,e_2]=e_1$, and let $\{\theta_1,\theta_2\}$ be the dual basis of $\mathfrak{aff}(1)^{*}$. The only non-zero structure constants $K_{ij}^{k}$ for $1\leq i,j,k\leq 2$ are $K_{12}^{1}=1$ and $K_{21}^{1}=-1$. Hence, a Lie $\mathfrak{aff}(1)$-foliation is defined by independent one-forms $\omega_1,\omega_2\in\Omega^{1}(M)$ satisfying the equations
	\begin{equation*}
		\begin{cases}
			d\omega_1-\omega_2\wedge\omega_1=0\\
			d\omega_2=0
		\end{cases}.
	\end{equation*}
	We now describe the infinitesimal deformations of the Lie $\mathfrak{aff}(1)$-foliation $(\mathcal{F},\omega)$.
	
	\begin{prop}\label{prop:coho}
		\begin{enumerate}
			\item One-cocycles in the complex $\big(\mathcal{A}^{\bullet},D\big)$ are given by quadruples $(\sigma_1,\sigma_2,c_1,c_2)\in\Omega^{1}(M)^{\times 2}\times\mathbb{R}^{2}$ such that
			\begin{equation}\label{eq:to-obtain}
				\begin{cases}
					d\left(\sigma_2-\frac{1}{2}c_2\omega_1+\frac{1}{2}c_1\omega_2\right)=0\\
					\left(\sigma_2-\frac{1}{2}c_2\omega_1+\frac{1}{2}c_1\omega_2\right)\wedge\omega_1=d\sigma_1-\omega_2\wedge\sigma_1
				\end{cases}.
			\end{equation}
			\item A one-cocycle $(\sigma_1,\sigma_2,c_1,c_2)$ is exact iff. there exist $f,g\in C^{\infty}(M)$ such that
			\begin{equation}\label{eq:coboundaries}
				\begin{cases}
					\sigma_2-\frac{1}{2}c_2\omega_1+\frac{1}{2}c_1\omega_2=df\\
					\sigma_1=f\omega_1-dg+g\omega_2
				\end{cases}.
			\end{equation}
		\end{enumerate}
	\end{prop}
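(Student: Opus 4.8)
The plan is to unravel the definitions of the complex $(\mathcal{A}^\bullet, D)$ in the case $\mathfrak{g} = \mathfrak{aff}(1)$, translating the abstract formulas for $\widehat d_M$, $\widehat d_{\mathfrak{g}}$ and $\mathcal{R}$ into explicit conditions on the components. First I would set up notation: a general element of $\mathcal{A}^1 = \Omega^1(M)^{\times 2}\oplus(\wedge^2\mathfrak{aff}(1)^*\otimes\mathfrak{aff}(1))$ is a pair $(\sigma,\psi)$ with $\sigma = (\sigma_1,\sigma_2)$ and $\psi = \theta_1\wedge\theta_2\otimes(c_1 e_1 + c_2 e_2)$ for scalars $c_1,c_2\in\mathbb{R}$ (using that $\wedge^2\mathfrak{aff}(1)^*$ is one-dimensional, spanned by $\theta_1\wedge\theta_2$). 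This is why one-cocycles are parametrized by quadruples $(\sigma_1,\sigma_2,c_1,c_2)$.

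Next I would compute each piece of $D(\sigma,\psi) = (\widehat d_M\sigma - \mathcal{R}(\psi), -\widehat d_{\mathfrak{g}}\psi)$ using only the non-zero structure constants $K_{12}^1 = 1$, $K_{21}^1 = -1$. For the $\mathcal{R}(\psi)$ term: applying the definition componentwise, $\mathcal{R}(\theta_1\wedge\theta_2\otimes e_i)$ has $k$-th component $\delta_{ik}\,\omega_1\wedge\omega_2$, so $\mathcal{R}(\psi) = (c_1\,\omega_1\wedge\omega_2,\; c_2\,\omega_1\wedge\omega_2)$. For $\widehat d_M\sigma$: the $k=1$ component is $d\sigma_1 + \sum_{i,j}K_{ij}^1\omega_i\wedge\sigma_j = d\sigma_1 + \omega_1\wedge\sigma_2 - \omega_2\wedge\sigma_1$, and the $k=2$ component is just $d\sigma_2$ since no $K_{ij}^2$ is nonzero. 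For $\widehat d_{\mathfrak{g}}\psi$: writing $\psi = \psi_1\otimes e_1 + \psi_2\otimes e_2$ with $\psi_1 = c_1\theta_1\wedge\theta_2$, $\psi_2 = c_2\theta_1\wedge\theta_2$, the $e_k$-component is $d\psi_k + \sum_{i,j}K_{ij}^k\theta_i\wedge\psi_j$; since $\psi_j\in\wedge^2\mathfrak{g}^*$ these $\theta_i\wedge\psi_j$ all vanish for dimension reasons, and $d\psi_k = 0$ because $\psi_k$ is a Maurer-Cartan $2$-form on the $2$-dimensional group $\mathrm{Aff}(1)$ (indeed $d\theta_1 = -\theta_1\wedge\theta_2$, $d\theta_2 = 0$, and one checks $d(\theta_1\wedge\theta_2) = 0$). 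Hence $\widehat d_{\mathfrak{g}}\psi = 0$ always, so the cocycle condition $D(\sigma,\psi) = 0$ reduces to $\widehat d_M\sigma = \mathcal{R}(\psi)$, i.e.
\begin{equation*}
\begin{cases}
d\sigma_1 + \omega_1\wedge\sigma_2 - \omega_2\wedge\sigma_1 = c_1\,\omega_1\wedge\omega_2\\
d\sigma_2 = c_2\,\omega_1\wedge\omega_2
\end{cases}.
\end{equation*}
Then I would algebraically rearrange: the second equation says $d\sigma_2 - c_2\,\omega_1\wedge\omega_2 = 0$; using $d\omega_1 = \omega_2\wedge\omega_1$ and $d\omega_2 = 0$ one has $d(\tfrac12 c_1\omega_2) = 0$ and $d(\tfrac12 c_2\omega_1) = \tfrac12 c_2\,\omega_2\wedge\omega_1 = -\tfrac12 c_2\,\omega_1\wedge\omega_2$, so $d(\sigma_2 - \tfrac12 c_2\omega_1 + \tfrac12 c_1\omega_2) = d\sigma_2 + \tfrac12 c_2\,\omega_1\wedge\omega_2$... here I need to be careful with the factor of $2$ and sign bookkeeping, comparing against the claimed first equation of \eqref{eq:to-obtain}; the point is that $d\sigma_2 = c_2\omega_1\wedge\omega_2$ is equivalent to closedness of $\sigma_2 - \tfrac12 c_2\omega_1 + \tfrac12 c_1\omega_2$ after absorbing the $\omega_1,\omega_2$ correction terms, and likewise the first displayed equation rearranges to the second line of \eqref{eq:to-obtain} using $\omega_1\wedge\sigma_2 = -\sigma_2\wedge\omega_1$ and collecting the $\omega_1\wedge\omega_2$, $\omega_2\wedge\omega_1$ terms into the bracketed one-form wedged with $\omega_1$. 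Getting these signs and factors exactly right is the one genuinely fiddly point.

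For part (2), I would compute $D$ on $\mathcal{A}^0 = \Omega^0(M)^{\times 2}\oplus(\wedge^1\mathfrak{g}^*\otimes\mathfrak{g})$. An element is $((g_1,g_2),\chi)$ with $\chi = \sum_k\chi_k\otimes e_k$, $\chi_k\in\mathfrak{g}^*$, so $\chi_k = a_k\theta_1 + b_k\theta_2$. First I would note $-\widehat d_{\mathfrak{g}}\chi$ lands in $\wedge^2\mathfrak{g}^*\otimes\mathfrak{g}$ and compute it via $d\theta_1 = -\theta_1\wedge\theta_2$, $d\theta_2 = 0$ and the structure-constant term; this produces the scalars $c_1,c_2$ in terms of $a_k,b_k$, and crucially I expect it to be \emph{surjective} onto the $(c_1,c_2)$ part (or at least, combined with the freedom in $\sigma$, to give exactly the stated coboundary conditions). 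Then $\widehat d_M(g_1,g_2) - \mathcal{R}(\chi)$ has first component $dg_1 + \omega_1 g_2 - \omega_2 g_1 - (\text{term from }\mathcal{R}(\chi))$ and second component $dg_2$. Writing out $(\sigma_1,\sigma_2,c_1,c_2) = D((g_1,g_2),\chi)$ and renaming $g_2 \rightsquigarrow f$ (so that $\sigma_2 - \tfrac12 c_2\omega_1 + \tfrac12 c_1\omega_2 = df$ emerges from the second component plus the $\widehat d_{\mathfrak{g}}\chi$ contribution) and $g_1 \rightsquigarrow -g$ (up to sign) should reproduce \eqref{eq:coboundaries}; here the subtlety is that the scalar parameters in $\chi$ must be chosen to make the $c_1,c_2$ come out right while $f = g_2$ is used for the closed-form part, and one has to check there is no further constraint — essentially that $\mathrm{Hom}(\mathfrak{g},\mathfrak{g})$ maps onto enough of $\wedge^2\mathfrak{g}^*\otimes\mathfrak{g}$. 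The main obstacle overall is purely organizational: keeping the $\mathcal{R}$, $\widehat d_M$, $\widehat d_{\mathfrak{g}}$ sign/index conventions consistent across the cocycle and coboundary computations so that the final rearranged equations match \eqref{eq:to-obtain} and \eqref{eq:coboundaries} on the nose, including the $\tfrac12$ factors which come from the symmetrization in \eqref{eq:lie} and the $\theta_1\wedge\theta_2$ vs.\ unordered basis of $\wedge^2\mathfrak{g}^*$.
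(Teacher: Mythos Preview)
Your approach is essentially identical to the paper's: both unravel the definitions of $\widehat d_M$, $\widehat d_{\mathfrak g}$, and $\mathcal R$ for $\mathfrak{aff}(1)$ and verify the resulting equations componentwise. The factor-of-$2$ confusion you flag disappears once you parametrize $\psi=-\tfrac12(c_1\theta_1\wedge\theta_2\otimes e_1+c_2\theta_1\wedge\theta_2\otimes e_2)$ as the paper does (and note that $\widehat d_{\mathfrak g}\psi=0$ holds simply because $\wedge^3\mathfrak{aff}(1)^*=0$, with no computation needed).
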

	\begin{proof}
		$(1)$ Recall that elements $(\sigma,\psi)\in\mathcal{A}^{1}$ are given by $\sigma=(\sigma_1,\sigma_2)\in\Omega^{1}(M)^{\times 2}$ and 
		\[
		\psi=-\frac{1}{2}\big(c_1\theta_1\wedge\theta_2\otimes e_1+c_2\theta_1\wedge\theta_2\otimes e_2\big),\hspace{1cm}c_1,c_2\in\mathbb{R}.
		\]
		Asking that $D(\sigma,\psi)=0$ is equivalent with $\widehat{d}_{M}\sigma-\mathcal{R}\psi=0$. Indeed, we automatically have that $\widehat{d}_{\mathfrak{g}}\psi=0$ because $\widehat{d}_{\mathfrak{g}}\psi\in\wedge^{3}\mathfrak{g}^{*}\otimes\mathfrak{g}$ and $\mathfrak{g}=\mathfrak{aff}(1)$ is $2$-dimensional. We obtain the system 
		\begin{equation}\label{eq:system}
			\begin{cases}
				0=\big(\widehat{d}_{M}\sigma-\mathcal{R}\psi\big)_1=d\sigma_1+\omega_1\wedge\sigma_2-\omega_2\wedge\sigma_1+\frac{1}{2}c_1\omega_1\wedge\omega_2\\
				0=\big(\widehat{d}_{M}\sigma-\mathcal{R}\psi\big)_2=d\sigma_2+\frac{1}{2}c_2\omega_1\wedge\omega_2
			\end{cases}.
		\end{equation}
		Clearly, the first equation in \eqref{eq:system} coincides with the second equation in \eqref{eq:to-obtain}. The second equation in \eqref{eq:system} is equivalent with the first equation in \eqref{eq:to-obtain} since $d\omega_1=\omega_2\wedge\omega_1$.

		For item $(2)$, recall that a one-cocycle $(\sigma,\psi)\in\mathcal{A}^{1}$ as in part $(1)$ is exact iff. there exist $h_1,h_2\in C^{\infty}(M)$ and $\nu\in\mathfrak{g}^{*}\otimes\mathfrak{g}$ such that
		\begin{equation}\label{eq:coboundary}
			\begin{cases}
				\sigma=\widehat{d}_{M}(h_1,h_2)-\mathcal{R}(\nu)\\
				\psi=-\widehat{d}_{\mathfrak{g}}\nu	
			\end{cases}.
		\end{equation}
		We can write $\nu=\sum_{i,j=1}^{2}a_{ij}\theta_i\otimes e_j$ for some $a_{ij}\in\mathbb{R}$. The first equation in \eqref{eq:coboundary} is then equivalent with the system
		\[
		\begin{cases}
			\sigma_1=dh_1+h_2\omega_1-h_1\omega_2-a_{11}\omega_1-a_{21}\omega_2\\
			\sigma_2=dh_2-a_{12}\omega_1-a_{22}\omega_2
		\end{cases}.
		\]
		The second equation in \eqref{eq:coboundary} means that
		\begin{align}\label{eq:comp}
			&-\frac{1}{2}\big(c_1\theta_1\wedge\theta_2\otimes e_1+c_2\theta_1\wedge\theta_2\otimes e_2\big)\nonumber\\
			&\hspace{2cm}=-\Big(a_{11}d\theta_1+a_{21}d\theta_2+\theta_1\wedge\big(a_{12}\theta_1+a_{22}\theta_2\big)-\theta_2\wedge\big(a_{11}\theta_1+a_{21}\theta_2\big)\Big)\otimes e_1\nonumber\\
			&\hspace{2.45cm}-\left(a_{12}d\theta_1+a_{22}d\theta_2\right)\otimes e_2.
		\end{align}
		Since $[e_1,e_2]=e_1$, we have that
		\[
		\begin{cases}
			d\theta_1(e_1,e_2)=-\theta_1\big([e_1,e_2]\big)=-\theta_1(e_1)=-1\\
			d\theta_2(e_1,e_2)=0
		\end{cases},
		\]
		which shows that $d\theta_1=-\theta_1\wedge\theta_2$ and $d\theta_2=0$. Hence, the equality \eqref{eq:comp} simplifies to
		\[
		\begin{cases}
			-\frac{1}{2}c_1\theta_1\wedge\theta_2=a_{11}\theta_1\wedge\theta_2-a_{22}\theta_1\wedge\theta_2-a_{11}\theta_1\wedge\theta_2=-a_{22}\theta_1\wedge\theta_2\\
			-\frac{1}{2}c_2\theta_1\wedge\theta_2=a_{12}\theta_1\wedge\theta_2
		\end{cases},
		\]
		hence $c_1=2a_{22}$ and $c_2=-2a_{12}$. It follows that $(\sigma,\psi)$ is a coboundary exactly when there exist $h_1,h_2\in C^{\infty}(M)$ and $a_{i,j}\in\mathbb{R}$ for $1\leq i,j\leq 2$ such that
		\[
		\begin{cases}
			\sigma_1=dh_1+h_2\omega_1-h_1\omega_2-a_{11}\omega_1-a_{21}\omega_2\\
			\sigma_2=dh_2-a_{12}\omega_1-a_{22}\omega_2\\
			c_1=2a_{22}\\
			c_2=-2a_{12}
		\end{cases},
		\]
		that is
		\[
		\begin{cases}
			\sigma_1=dh_1+h_2\omega_1-h_1\omega_2-a_{11}\omega_1-a_{21}\omega_2\\
			\sigma_2=dh_2+\frac{1}{2}c_2\omega_1-\frac{1}{2}c_1\omega_2
		\end{cases}.
		\]
		This in turn is equivalent with
		\[
		\begin{cases}
			\sigma_2-\frac{1}{2}c_2\omega_1+\frac{1}{2}c_1\omega_2=d\big(h_2-a_{11}\big)\\
			\sigma_1=\big(h_2-a_{11}\big)\omega_1+d\big(h_1+a_{21}\big)-\big(h_1+a_{21}\big)\omega_2
		\end{cases}.
		\]
		Setting $f:=h_2-a_{11}$ and $g:=-(h_1+a_{21})$, we obtain the expression \eqref{eq:coboundaries}. 
	\end{proof}
	
	We can express Prop.~\ref{prop:coho} in terms of the mapping cone of a suitable cochain map. Let us recall the definition and the sign conventions that we will use for mapping cones.
	
	\begin{defi}
		Let $(A^{\bullet},d_{A})$ and $(B^{\bullet},d_B)$ be complexes and $\Phi:(A^{\bullet},d_A)\rightarrow(B^{\bullet},d_B)$ a cochain map. The mapping cone $\mathcal{C}(\Phi)$ is the complex $\big(A^{\bullet}\oplus B^{\bullet-1},d_{\mathcal{C}}\big)$ with differential
		\[
		d_{\mathcal{C}}(a,b)=\big(d_{A}a,\Phi(a)-d_{B}b\big),\hspace{1cm}\text{for}\ a\in A^{k}, b\in B^{k-1}.
		\]
	\end{defi}
	
	In our setting, we have a natural cochain map which we now introduce. Recall that the Lie $\mathfrak{aff}(1)$-foliation $(\mathcal{F},\omega)$ is defined by $\omega_1,\omega_2\in\Omega^{1}(M)$ satisfying 
	\begin{equation*}\label{eq:lie-aff}
		\begin{cases}
			d\omega_1-\omega_2\wedge\omega_1=0\\
			d\omega_2=0
		\end{cases}.
	\end{equation*}
	This means that $\omega_1$ is a one-cocycle in the Morse-Novikov complex $\big(\Omega^{\bullet}(M),d_{\omega_2}\big)$.
	
	\begin{lemma}
		We have a cochain map
		\begin{equation*}\label{eq:cochain}
			\Phi:\big(\Omega^{\bullet}(M),d\big)\rightarrow\big(\Omega^{\bullet+1}(M),d_{\omega_2}\big):\alpha\mapsto\alpha\wedge\omega_1.
		\end{equation*}
	\end{lemma}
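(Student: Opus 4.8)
The plan is to verify directly that $\Phi$ intertwines the two differentials, i.e. that
\[
d_{\omega_2}\bigl(\Phi(\alpha)\bigr)=\Phi(d\alpha)\qquad\text{for all }\alpha\in\Omega^{k}(M).
\]
The only inputs needed are the graded Leibniz rule for $d$ and the first of the defining equations \eqref{eq:defining-equations}, rewritten as $d\omega_1=\omega_2\wedge\omega_1$.

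First I would compute the left-hand side. By definition of $d_{\omega_2}$ and the Leibniz rule,
\[
d_{\omega_2}(\alpha\wedge\omega_1)=d(\alpha\wedge\omega_1)-\omega_2\wedge\alpha\wedge\omega_1
=d\alpha\wedge\omega_1+(-1)^{k}\alpha\wedge d\omega_1-\omega_2\wedge\alpha\wedge\omega_1.
\]
Next I would substitute $d\omega_1=\omega_2\wedge\omega_1$ and move $\omega_2$ past $\alpha$, using that $\omega_2$ is a one-form so $\omega_2\wedge\alpha=(-1)^{k}\alpha\wedge\omega_2$; this makes the last two terms cancel, leaving $d\alpha\wedge\omega_1=\Phi(d\alpha)$. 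One should also record the trivial but relevant observation that $\Phi$ shifts degrees by one, which is what makes the target complex $\bigl(\Omega^{\bullet+1}(M),d_{\omega_2}\bigr)$ rather than $\bigl(\Omega^{\bullet}(M),d_{\omega_2}\bigr)$, and that $\Phi$ is $\mathbb{R}$-linear, being wedge product with a fixed form.

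There is no real obstacle here: the statement is a one-line sign computation, and the step most worth stating carefully is the cancellation $(-1)^{k}\alpha\wedge\omega_2\wedge\omega_1-\omega_2\wedge\alpha\wedge\omega_1=0$, which is exactly where the hypothesis $d\omega_1=\omega_2\wedge\omega_1$ enters. I would present the computation as a short displayed chain of equalities and point out that this is the abstract reason the pair $(\omega_1,\omega_2)$ appears naturally in Morse--Novikov cohomology, echoing the analogous verification that $j$ is a cochain map in the proof of the short exact sequence in \S\ref{subsec:exact-seq}.
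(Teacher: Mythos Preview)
Your proposal is correct and follows essentially the same approach as the paper: both compute $d_{\omega_2}(\alpha\wedge\omega_1)$ directly using the Leibniz rule, substitute $d\omega_1=\omega_2\wedge\omega_1$, and observe that the resulting two terms cancel after commuting $\omega_2$ past $\alpha$. The paper presents exactly this displayed chain of equalities, so your write-up matches it line for line.
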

	\begin{proof}
		We compute for $\alpha\in\Omega^{k}(M)$:
		\begin{align*}
			d_{\omega_2}(\alpha\wedge\omega_1)&=d(\alpha\wedge\omega_1)-\omega_2\wedge\alpha\wedge\omega_1\\
			&=d\alpha\wedge\omega_1+(-1)^{k}\alpha\wedge d\omega_1-\omega_2\wedge\alpha\wedge\omega_1\\
			&=d\alpha\wedge\omega_1+(-1)^{k}\alpha\wedge\omega_2\wedge\omega_1-(-1)^{k}\alpha\wedge\omega_2\wedge\omega_1\\
			&=d\alpha\wedge\omega_1.\qedhere
		\end{align*}
	\end{proof}
	
	Its mapping cone $\mathcal{C}(\Phi)$ is the complex $\big(\Omega^{\bullet}(M)\oplus\Omega^{\bullet}(M),d_{\mathcal{C}}\big)$ with differential
	\[
	d_{\mathcal{C}}(\alpha,\beta)=\big(d\alpha,\alpha\wedge\omega_1-d\beta+\omega_2\wedge\beta\big),\hspace{1cm}\text{for}\ \alpha,\beta\in\Omega^{k}(M).
	\]
	Rephrasing Prop.~\ref{prop:coho}, one-cocycles in $\big(\mathcal{A}^{\bullet},D\big)$ are  $(\sigma_1,\sigma_2,c_1,c_2)\in\Omega^{1}(M)^{\times 2}\times\mathbb{R}^{2}$ such that
	\[
	d_{\mathcal{C}}\left(\sigma_2-\frac{1}{2}c_2\omega_1+\frac{1}{2}c_1\omega_2,\sigma_1\right)=0.
	\]
	A one-cocycle $(\sigma_1,\sigma_2,c_1,c_2)$ is exact in $\big(\mathcal{A}^{\bullet},D\big)$ iff. there exist $f,g\in C^{\infty}(M)$ such that
	\[
	\left(\sigma_2-\frac{1}{2}c_2\omega_1+\frac{1}{2}c_1\omega_2,\sigma_1\right)=d_{\mathcal{C}}(f,g).
	\]
	Consequently, we obtain the following description of the deformation cohomology $H^{1}(\mathcal{A})$.

	\begin{cor}\label{cor:isom}
		We have an isomorphism
		\[
		H^{1}(\mathcal{A})\overset{\sim}{\longrightarrow}H^{1}(\mathcal{C}(\Phi)):[(\sigma_1,\sigma_2,c_1,c_2)]\mapsto\left[\left(\sigma_2-\frac{1}{2}c_2\omega_1+\frac{1}{2}c_1\omega_2,\sigma_1\right)\right].
		\]
	\end{cor}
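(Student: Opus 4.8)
The plan is to exhibit an explicit map between the two cohomology groups and check directly, using Proposition~\ref{prop:coho}, that it is a well-defined linear isomorphism. Define $\Psi\colon\mathcal{A}^{1}\to\Omega^{1}(M)\oplus\Omega^{1}(M)$ by $\Psi(\sigma_1,\sigma_2,c_1,c_2)=\bigl(\sigma_2-\tfrac{1}{2}c_2\omega_1+\tfrac{1}{2}c_1\omega_2,\ \sigma_1\bigr)$. The key observation, which has essentially been made already in the paragraph preceding the statement, is that under this identification the cocycle condition $D(\sigma,\psi)=0$ from Proposition~\ref{prop:coho}$(1)$ is precisely $d_{\mathcal{C}}\Psi(\sigma_1,\sigma_2,c_1,c_2)=0$, and the coboundary condition from Proposition~\ref{prop:coho}$(2)$ is precisely $\Psi(\sigma_1,\sigma_2,c_1,c_2)=d_{\mathcal{C}}(f,g)$ for some $f,g\in C^{\infty}(M)$. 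So $\Psi$ sends $1$-cocycles of $\mathcal{A}^{\bullet}$ to $1$-cocycles of $\mathcal{C}(\Phi)$ and $1$-coboundaries to $1$-coboundaries, hence descends to a well-defined linear map $H^{1}(\mathcal{A})\to H^{1}(\mathcal{C}(\Phi))$, which is the asserted map.

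First I would verify injectivity on cohomology. Suppose $(\sigma_1,\sigma_2,c_1,c_2)$ is a $1$-cocycle with $\Psi(\sigma_1,\sigma_2,c_1,c_2)=d_{\mathcal{C}}(f,g)$; unwinding via the definition of $d_{\mathcal{C}}$ gives exactly the system \eqref{eq:coboundaries}, which by Proposition~\ref{prop:coho}$(2)$ says the original cocycle is exact in $\mathcal{A}^{\bullet}$. Then I would verify surjectivity. A $1$-cocycle of $\mathcal{C}(\Phi)$ is a pair $(\alpha,\beta)\in\Omega^{1}(M)^{\times 2}$ with $d\alpha=0$ and $\alpha\wedge\omega_1=d\beta-\omega_2\wedge\beta$; I need to realize $(\alpha,\beta)$ (up to a coboundary) as $\Psi$ of some $\mathcal{A}^{\bullet}$-cocycle. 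Set $\sigma_1:=\beta$, $c_1:=c_2:=0$, and $\sigma_2:=\alpha$; then $\Psi(\sigma_1,\sigma_2,0,0)=(\alpha,\beta)$ on the nose, and one checks the pair $(\sigma_1,\sigma_2,0,0)$ satisfies \eqref{eq:to-obtain} — the first equation is $d\alpha=0$, the second is $\alpha\wedge\omega_1=d\beta-\omega_2\wedge\beta$ — so it is a genuine $1$-cocycle of $\mathcal{A}^{\bullet}$. Hence $\Psi$ is already surjective at the level of cocycles, a fortiori on cohomology.

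Since linearity of $\Psi$ is immediate from the formula, combining the three points above shows $\Psi$ induces an isomorphism $H^{1}(\mathcal{A})\xrightarrow{\ \sim\ }H^{1}(\mathcal{C}(\Phi))$ with the stated formula. Honestly I do not anticipate a serious obstacle here: everything has been arranged so that the cocycle and coboundary descriptions of $H^{1}(\mathcal{A})$ in Proposition~\ref{prop:coho} literally match, term for term, the definitions of $d_{\mathcal{C}}$-closedness and $d_{\mathcal{C}}$-exactness in the mapping cone. The only points requiring care are purely bookkeeping: checking that $\Psi$ is a bijection at the cochain level on the nose (so that well-definedness on cohomology, injectivity, and surjectivity all follow by transporting the cocycle/coboundary descriptions), and making sure the sign conventions in the definition of $d_{\mathcal{C}}$ (namely $d_{\mathcal{C}}(\alpha,\beta)=(d\alpha,\Phi(\alpha)-d_{B}\beta)$ with $d_B=d_{\omega_2}$) are consistent with the equations \eqref{eq:to-obtain} and \eqref{eq:coboundaries}, which was the purpose of the discussion immediately preceding the corollary. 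The proof is therefore essentially a one-line invocation of Proposition~\ref{prop:coho} together with the definition of the mapping cone.
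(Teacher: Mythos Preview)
Your proof is correct and follows exactly the paper's approach: the corollary is deduced directly from the observation (spelled out in the text immediately preceding it) that Proposition~\ref{prop:coho} identifies the $\mathcal{A}^{\bullet}$-cocycle and coboundary conditions with $d_{\mathcal{C}}$-closedness and $d_{\mathcal{C}}$-exactness of $\Psi(\sigma_1,\sigma_2,c_1,c_2)$. Your explicit injectivity and surjectivity checks are fine and simply flesh out what the paper leaves implicit.

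One small correction to your closing paragraph: $\Psi$ is \emph{not} a bijection at the cochain level --- its kernel is the $2$-dimensional subspace $\{(0,\tfrac{1}{2}c_2\omega_1-\tfrac{1}{2}c_1\omega_2,c_1,c_2):c_1,c_2\in\mathbb{R}\}$. This does not affect your argument, since you (correctly) establish injectivity on cohomology via Proposition~\ref{prop:coho}(2) rather than via injectivity of $\Psi$ itself; but you should drop the parenthetical claim that cochain-level bijectivity is what needs checking.
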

	
	Hence, the rigidity statement in Thm.~\ref{thm:rigidity} reduces to the following when the Lie foliation $(\mathcal{F},\omega)$ is modeled on the Lie algebra $\mathfrak{aff}(1)$. 
	
	\begin{cor}\label{cor:rigid}
		Let $M$ be a compact manifold and $(\mathcal{F},\omega)$ a Lie $\mathfrak{aff}(1)$-foliation on $M$. If the mapping cone $\mathcal{C}(\Phi)$  satisfies $H^{1}(\mathcal{C}(\Phi))=0$, then $(\mathcal{F},\omega)$ is rigid.
	\end{cor}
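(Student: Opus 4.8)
The plan is to deduce this directly by chaining together the two results already in hand: Theorem~\ref{thm:rigidity} and Corollary~\ref{cor:isom}. Theorem~\ref{thm:rigidity} asserts that a Lie $\mathfrak{g}$-foliation $(\mathcal{F},\omega)$ on a compact manifold $M$ is rigid as soon as the deformation cohomology group $H^{1}(\mathcal{A})$ vanishes. Corollary~\ref{cor:isom}, specialized to $\mathfrak{g}=\mathfrak{aff}(1)$, provides an explicit isomorphism $H^{1}(\mathcal{A})\xrightarrow{\sim}H^{1}(\mathcal{C}(\Phi))$, sending $[(\sigma_1,\sigma_2,c_1,c_2)]$ to $[(\sigma_2-\tfrac12 c_2\omega_1+\tfrac12 c_1\omega_2,\sigma_1)]$. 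So the first (and essentially only) step is to observe that the hypothesis $H^{1}(\mathcal{C}(\Phi))=0$ forces $H^{1}(\mathcal{A})=0$ through this isomorphism.

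Once $H^{1}(\mathcal{A})=0$ is established, I would simply invoke Theorem~\ref{thm:rigidity} to conclude that $(\mathcal{F},\omega)$ is rigid. No further argument is needed, since $M$ is assumed compact and $(\mathcal{F},\omega)$ is by assumption a Lie $\mathfrak{aff}(1)$-foliation, so the hypotheses of Theorem~\ref{thm:rigidity} are met. There is no genuine obstacle at this stage: the real content has already been extracted in Proposition~\ref{prop:coho} and Corollary~\ref{cor:isom}, which translate the abstract complex $(\mathcal{A}^{\bullet},D)$ into the mapping cone $\mathcal{C}(\Phi)$; the present statement is just the packaging of that translation together with the abstract rigidity theorem from \cite{Lie}. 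If anything, the only point worth spelling out is that the isomorphism of Corollary~\ref{cor:isom} is an honest isomorphism of vector spaces (not merely a surjection or injection), so that vanishing of one side is equivalent to vanishing of the other — but this is already guaranteed by the cited corollary.
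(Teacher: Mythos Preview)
Your proposal is correct and matches the paper's approach exactly: the paper does not even give a separate proof for this corollary, instead introducing it with ``Hence, the rigidity statement in Thm.~\ref{thm:rigidity} reduces to the following\ldots'', making clear that it follows immediately from combining Theorem~\ref{thm:rigidity} with the isomorphism $H^{1}(\mathcal{A})\cong H^{1}(\mathcal{C}(\Phi))$ of Corollary~\ref{cor:isom}.
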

	
	It remains to figure out when the cohomology group $H^{1}(\mathcal{C}(\Phi))$ vanishes. To this end, we note that the mapping cone fits in a short exact sequence of complexes
	\[
	\begin{tikzcd}
		0\arrow[r]&\big(\Omega^{\bullet}(M),-d_{\omega_2}\big)\arrow{r}{\gamma\mapsto(0,\gamma)}&\big(\mathcal{C}(\Phi)^{\bullet},d_{\mathcal{C}}\big)\arrow{r}{(\alpha,\beta)\mapsto\alpha}&\big(\Omega^{\bullet}(M),d\big)\arrow[r]&0.
	\end{tikzcd}
	\]
	We get a long exact sequence in cohomology
	\[
	\cdots\rightarrow H^{k}_{\omega_2}(M)\rightarrow H^{k}(\mathcal{C}(\Phi))\rightarrow H^{k}(M)\overset{[\Phi]}{\longrightarrow} H^{k+1}_{\omega_2}(M)\rightarrow H^{k+1}(\mathcal{C}(\Phi))\rightarrow H^{k+1}(M)\rightarrow\cdots
	\]
	with connecting homomorphism
	\[
	[\Phi]:H^{k}(M)\rightarrow H^{k+1}_{\omega_2}(M):[\alpha]\mapsto[\alpha\wedge\omega_1].
	\]
	We now investigate the first few terms of this long exact sequence, under the assumption that $M$ is compact and connected.
	\begin{enumerate}[i)]
		\item $H^{0}_{\omega_2}(M)=0$. 
		
		\noindent
		Indeed, since $M$ is compact and $\omega_2$ is nowhere zero, the class $[\omega_2]\in H^{1}(M)$ is non-trivial. Because $M$ is connected, this implies that $H^{0}_{\omega_2}(M)$ vanishes, see Prop.~\ref{prop:prelim}~$ii)$. 
		\item $H^{0}(\mathcal{C}(\Phi))=0$.
		
		\noindent
		To see this, note that
		\begin{align*}
			H^{0}(\mathcal{C}(\Phi))&=\left\{(f,g)\in C^{\infty}(M)\oplus C^{\infty}(M): df=0\ \text{and}\ f\omega_1=dg-g\omega_2\right\}\\
			&=\left\{(c,g)\in\mathbb{R}\oplus C^{\infty}(M): c\omega_1=dg-g\omega_2\right\}.
		\end{align*}
		However, if $c\omega_1=dg-g\omega_2$ then necessarily $c=0$. Indeed, by compactness of $M$ there exists $x\in M$ such that $(dg)_{x}=0$, which implies that $c(\omega_1)_x+g(x)(\omega_2)_x=0$. Because $\omega_1$ and $\omega_2$ are linearly independent at every point, this implies that $c=0$. But then $dg-g\omega_2=0$, which means that $g\in H^{0}_{\omega_2}(M)$. By item $i)$ above, also $g$ vanishes.
	\end{enumerate}
	
	It follows that the long exact sequence looks like
	\begin{equation}\label{eq:long-exact-sequence}
		0\rightarrow \mathbb{R}\overset{[\Phi]}{\longrightarrow} H^{1}_{\omega_2}(M)\rightarrow H^{1}(\mathcal{C}(\Phi))\rightarrow H^{1}(M)\overset{[\Phi]}{\longrightarrow}H^{2}_{\omega_2}(M)\rightarrow \cdots
	\end{equation}
	
	\begin{cor}\label{cor:injective}
		Let $M$ be compact and connected. The cohomology  $H^{1}(\mathcal{C}(\Phi))$ vanishes exactly when $H^{1}_{\omega_2}(M)=\mathbb{R}[\omega_1]$ and $[\Phi]:H^{1}(M)\rightarrow H^{2}_{\omega_2}(M)$ is injective.
	\end{cor}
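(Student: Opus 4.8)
The proof is essentially bookkeeping with the long exact sequence \eqref{eq:long-exact-sequence}, so the plan is to extract a short exact sequence from it and observe when the middle term vanishes. First I would isolate the relevant segment
\[
0\rightarrow \mathbb{R}\overset{[\Phi]}{\longrightarrow} H^{1}_{\omega_2}(M)\rightarrow H^{1}(\mathcal{C}(\Phi))\rightarrow H^{1}(M)\overset{[\Phi]}{\longrightarrow}H^{2}_{\omega_2}(M),
\]
in which the leftmost arrow is the connecting map $[\Phi]\colon H^{0}(M)\rightarrow H^{1}_{\omega_2}(M)$. Since $M$ is connected, $H^{0}(M)=\mathbb{R}$, and by the explicit formula $[\Phi]\colon[\alpha]\mapsto[\alpha\wedge\omega_1]$ applied to the constant function $1$, the image of this arrow is precisely the line $\mathbb{R}[\omega_1]\subseteq H^{1}_{\omega_2}(M)$. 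As recorded before the statement (using $H^{0}(\mathcal{C}(\Phi))=0$), this arrow is injective, so $[\omega_1]\neq 0$ in $H^{1}_{\omega_2}(M)$ and the image is a genuine one-dimensional subspace.

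Next I would read off from exactness the short exact sequence
\[
0\longrightarrow \frac{H^{1}_{\omega_2}(M)}{\mathbb{R}[\omega_1]}\longrightarrow H^{1}(\mathcal{C}(\Phi))\longrightarrow \ker\!\left([\Phi]\colon H^{1}(M)\rightarrow H^{2}_{\omega_2}(M)\right)\longrightarrow 0.
\]
Here exactness at $H^{1}_{\omega_2}(M)$ and at $H^{1}(\mathcal{C}(\Phi))$ identifies the kernel of $H^{1}(\mathcal{C}(\Phi))\rightarrow H^{1}(M)$ with the cokernel $H^{1}_{\omega_2}(M)/\mathbb{R}[\omega_1]$ of the connecting map, while exactness at $H^{1}(M)$ identifies the image of $H^{1}(\mathcal{C}(\Phi))\rightarrow H^{1}(M)$ with $\ker\big([\Phi]\colon H^{1}(M)\rightarrow H^{2}_{\omega_2}(M)\big)$.

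Finally, a short exact sequence of vector spaces has vanishing middle term if and only if both outer terms vanish. Hence $H^{1}(\mathcal{C}(\Phi))=0$ is equivalent to the simultaneous vanishing of $H^{1}_{\omega_2}(M)/\mathbb{R}[\omega_1]$ and of $\ker\big([\Phi]\colon H^{1}(M)\rightarrow H^{2}_{\omega_2}(M)\big)$, that is, to the conditions that $H^{1}_{\omega_2}(M)=\mathbb{R}[\omega_1]$ and that $[\Phi]\colon H^{1}(M)\rightarrow H^{2}_{\omega_2}(M)$ is injective. This is exactly the claim. I do not expect any real obstacle; the only point deserving a moment of care is the identification of the image of the connecting homomorphism at the left end of the sequence with the line $\mathbb{R}[\omega_1]$ rather than a larger subspace, and this is immediate from the formula for $[\Phi]$ together with $H^{0}(M)=\mathbb{R}$.
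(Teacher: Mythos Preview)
Your proof is correct and follows essentially the same approach as the paper: both arguments read off the conclusion from the long exact sequence \eqref{eq:long-exact-sequence}. You package the diagram chase as a single short exact sequence with $H^{1}(\mathcal{C}(\Phi))$ in the middle, whereas the paper argues the two implications separately, but this is a cosmetic difference rather than a substantive one.
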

	\begin{proof}
		First assume that $H^{1}(\mathcal{C}(\Phi))$ vanishes. The long exact sequence \eqref{eq:long-exact-sequence} then becomes 
		\begin{equation*}
			0\rightarrow \mathbb{R}\overset{[\Phi]}{\longrightarrow} H^{1}_{\omega_2}(M)\rightarrow 0\rightarrow H^{1}(M)\overset{[\Phi]}{\longrightarrow}H^{2}_{\omega_2}(M)\rightarrow \cdots
		\end{equation*}
		So $[\Phi]:\mathbb{R}\rightarrow H^{1}_{\omega_2}(M)$ is an isomorphism, i.e. $H^{1}_{\omega_2}(M)=\mathbb{R}[\omega_1]$, and $[\Phi]:H^{1}(M)\rightarrow H^{2}_{\omega_2}(M)$ is injective. Conversely, assume that $H^{1}_{\omega_2}(M)=\mathbb{R}[\omega_1]$ and $[\Phi]:H^{1}(M)\rightarrow H^{2}_{\omega_2}(M)$ is injective. It follows that the map $H^{1}_{\omega_2}(M)\rightarrow H^{1}(\mathcal{C}(\Phi))$ in \eqref{eq:long-exact-sequence} is identically zero, hence the map $H^{1}(\mathcal{C}(\Phi))\rightarrow H^{1}(M)$ is injective. On the other hand, the map $H^{1}(\mathcal{C}(\Phi))\rightarrow H^{1}(M)$ is identically zero because $[\Phi]:H^{1}(M)\rightarrow H^{2}_{\omega_2}(M)$ is injective. Hence, $H^{1}(\mathcal{C}(\Phi))$ vanishes.
	\end{proof}
	
	We can now rephrase Cor.~\ref{cor:rigid} to obtain the main result of this subsection.
	
	\begin{cor}\label{cor:req-rigidity}
		Let $M$ be compact and connected, with a Lie $\mathfrak{aff}(1)$-foliation $(\mathcal{F},\omega)$ defined by one-forms $\omega_1,\omega_2\in\Omega^{1}(M)$ satisfying
		\begin{equation}\label{eq:aff-fol}
			\begin{cases}
				d\omega_1-\omega_2\wedge\omega_1=0\\
				d\omega_2=0
			\end{cases}.
		\end{equation}
		If $H^{1}_{\omega_2}(M)=\mathbb{R}[\omega_1]$ and $\bullet\wedge[\omega_1]:H^{1}(M)\rightarrow H^{2}_{\omega_2}(M)$ is injective, then $(\mathcal{F},\omega)$ is rigid.
	\end{cor}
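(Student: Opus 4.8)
The plan is to deduce Cor.~\ref{cor:req-rigidity} by simply combining the ingredients that have already been assembled in the excerpt, so the argument is essentially a chaining-together of earlier results rather than a fresh computation. First I would recall Cor.~\ref{cor:rigid}: since $M$ is compact, it suffices to show that the mapping cone cohomology $H^{1}(\mathcal{C}(\Phi))$ vanishes, where $\Phi:(\Omega^{\bullet}(M),d)\to(\Omega^{\bullet+1}(M),d_{\omega_2})$ is wedging with $\omega_1$. Then I would invoke Cor.~\ref{cor:injective}, which gives the precise criterion: $H^{1}(\mathcal{C}(\Phi))=0$ exactly when $H^{1}_{\omega_2}(M)=\mathbb{R}[\omega_1]$ and the connecting map $[\Phi]:H^{1}(M)\to H^{2}_{\omega_2}(M)$, $[\alpha]\mapsto[\alpha\wedge\omega_1]$, is injective. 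Since the connecting map $[\Phi]$ is by definition exactly the map $\bullet\wedge[\omega_1]$ appearing in the statement, the two hypotheses of the corollary are literally the hypotheses of Cor.~\ref{cor:injective}, and the conclusion follows.

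Concretely, the steps in order are: (i) note that $M$ compact and connected allows us to apply Cor.~\ref{cor:injective}; (ii) feed the two assumed conditions --- $H^{1}_{\omega_2}(M)=\mathbb{R}[\omega_1]$ and injectivity of $\bullet\wedge[\omega_1]:H^{1}(M)\to H^{2}_{\omega_2}(M)$ --- into Cor.~\ref{cor:injective} to conclude $H^{1}(\mathcal{C}(\Phi))=0$; (iii) apply Cor.~\ref{cor:rigid} (valid since $M$ is compact) to conclude that $(\mathcal{F},\omega)$ is rigid. The only point that warrants a remark is the identification of the connecting homomorphism in the long exact sequence \eqref{eq:long-exact-sequence} with the wedge map $\bullet\wedge[\omega_1]$; but this identification has already been recorded right before Cor.~\ref{cor:injective}, so nothing new is needed.

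There is no real obstacle here: the substance of the result --- the isomorphism $H^{1}(\mathcal{A})\cong H^{1}(\mathcal{C}(\Phi))$ of Cor.~\ref{cor:isom}, the vanishing criterion of Cor.~\ref{cor:injective}, and the passage from cochain-level triviality to genuine rigidity via Thm.~\ref{thm:rigidity}/Cor.~\ref{cor:rigid} --- has all been done beforehand. If I had to name a ``hard part'', it would really be the earlier inputs, in particular Thm.~\ref{thm:rigidity} from \cite{Lie} (which requires compactness of $M$ to turn infinitesimal rigidity into actual rigidity) and the careful bookkeeping in Prop.~\ref{prop:coho} that produced the mapping-cone description. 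The present corollary itself is a one-line synthesis: compose Cor.~\ref{cor:injective} with Cor.~\ref{cor:rigid}.
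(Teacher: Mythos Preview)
Your proposal is correct and matches the paper's approach exactly: the paper presents Cor.~\ref{cor:req-rigidity} as a direct rephrasing of Cor.~\ref{cor:rigid} using the vanishing criterion of Cor.~\ref{cor:injective}, with no additional argument given.
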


	\subsection{Model Lie affine foliations}\label{subsec:model}
	In this subsection, we recall an important construction that yields Lie affine foliations on mapping tori. We will call them \emph{model Lie affine foliations}. These are particularly important because any Lie affine foliation on a compact, connected, orientable manifold of dimension $3$ or $4$ is foliated diffeomorphic to a foliation of this type.
	
	In \S\ref{subsub:rigidity-models}, we find explicit criteria for model Lie affine foliations under which the conditions in Cor.~\ref{cor:req-rigidity} are satisfied. This yields many examples of Lie affine foliations that are rigid, when deformed within the class of Lie foliations. By contrast, we show in \S\ref{subsub:non-rigid} that model Lie affine foliations are never rigid when deformed merely as foliations. In \S\ref{subsub:independence} we prove a remarkable auxiliary result: for model Lie affine foliations, the cohomological conditions in Cor.~\ref{cor:req-rigidity} turn out to be independent of the choice of defining Maurer-Cartan form, i.e. they only depend on the underlying foliation. We will use this to deduce a general rigidity result for Lie affine foliations on compact, connected, orientable manifolds of dimension $3$ or $4$ which, as mentioned above, are foliated diffeomorphic to model Lie affine foliations.

	\subsubsection{Model Lie affine foliations}
	Let $L$ be a compact, connected manifold and assume that $\varphi\in\text{Diff}(L)$ is a diffeomorphism. Consider the mapping torus
	\[
	\mathbb{T}_{\varphi}:=\frac{L\times \mathbb{R}}{(x,t)\sim(\varphi(x),t+1)}.
	\]
	Assume that $\alpha\in\Omega^{1}(L)$ is a closed nowhere-vanishing one-form satisfying $\varphi^{*}\alpha=e^{\lambda}\alpha$ for some $\lambda\neq 0$. It follows that we get well-defined one-forms $\omega_1,\omega_2\in\Omega^{1}(\mathbb{T}_{\varphi})$ given by
	\begin{equation}\label{eq:model-forms}
		\begin{cases}
			\omega_1:=e^{-\lambda t}\alpha\\
			\omega_2=-\lambda dt
		\end{cases}.
	\end{equation}
	These satisfy the equations \eqref{eq:aff-fol}, so they define a Lie $\mathfrak{aff}(1)$-foliation $(\mathcal{G}_{\alpha,\lambda},\omega)$. 
	
	\begin{defi}\label{def:model}
		A model Lie affine foliation is a Lie $\mathfrak{aff}(1)$-foliation of the form $(\mathcal{G}_{\alpha,\lambda},\omega)$.
	\end{defi}
	
	Any compact, connected, orientable manifold of dimension $3$ or $4$ with a Lie $\mathfrak{aff}(1)$-foliation is foliated diffeomorphic to a foliated manifold of the type $(\mathbb{T}_{\varphi},\mathcal{G}_{\alpha,\lambda})$ by \cite{caron},\cite{Matsumoto}. 
	
	\subsubsection{Rigidity of model Lie affine foliations}\label{subsub:rigidity-models}
	Let $(\mathcal{G}_{\alpha,\lambda},\omega)$ be a model Lie affine foliation on the mapping torus $\mathbb{T}_{\varphi}$. Below, we give sufficient conditions ensuring that the requirements of Cor.~\ref{cor:req-rigidity} are met. To do so, we first recall that $H^{1}(\mathbb{T}_{\varphi})$ fits in a short exact sequence
\begin{equation}\label{eq:first-sequence}
\begin{tikzcd}
	0\arrow[r] & \mathbb{R}\arrow{r}{c\mapsto c[dt]} & H^{1}(\mathbb{T}_{\varphi})\arrow{r}{[i_{0}^{*}]} & \ker\left([\varphi^{*}-\mathrm{Id}]:H^{1}(L)\rightarrow H^{1}(L)\right)\arrow[r] &0
\end{tikzcd},
\end{equation}
where $i_{0}:L\rightarrow\mathbb{T}_{\varphi}:x\mapsto [(x,0)]$ is the inclusion of $L$ as the zero-fiber. See \cite[Lemma~12]{bazzoni}.
Because of Cor.~\ref{cor:computed}, the Morse-Novikov cohomology $H^{2}_{-\lambda dt}(\mathbb{T}_{\varphi})$ fits in a similar sequence. To describe it, we introduce for every $c\in\mathbb{R}$ the spaces
\begin{align*}
&K^{\bullet}_c:=\ker\left([\varphi^{*}-e^{-c}\mathrm{Id}]:H^{\bullet}(L)\rightarrow H^{\bullet}(L)\right),\\
&C^{\bullet}_c:=\mathrm{coker}\left([\varphi^{*}-e^{-c}\mathrm{Id}]:H^{\bullet}(L)\rightarrow H^{\bullet}(L)\right).
\end{align*}
We then have another short exact sequence
\begin{equation}\label{eq:second-sequence}
\begin{tikzcd}
	0\arrow[r] & C_{-\lambda}^{1}\arrow{r} & H^{2}_{-\lambda dt}(\mathbb{T}_{\varphi})\arrow{r}{[i_{0}^{*}]} & K_{-\lambda}^{2}\arrow[r] &0.
\end{tikzcd}
\end{equation}
Note that the surjection $H^{2}_{-\lambda dt}(\mathbb{T}_{\varphi})\rightarrow K_{-\lambda}^{2}$ in this sequence is indeed given by $[i_{0}^{*}]$. This is a consequence of the short exact sequence \eqref{eq:sh-ex} and Prop.~\ref{prop:ker}. The following result provides a relation between the short exact sequence \eqref{eq:first-sequence} and the short exact sequence \eqref{eq:second-sequence}.
	
\begin{lemma}
Consider a model Lie affine foliation $(\mathcal{G}_{\alpha,\lambda},\omega)$ on the mapping torus $\mathbb{T}_{\varphi}$. We then have a commutative diagram with exact rows, given by
\begin{equation}\label{eq:diagram-sequences}
\begin{tikzcd}	
	0\arrow[r] & \mathbb{R}\arrow{r}\arrow{d}{c\mapsto \frac{c}{\lambda}\overline{[\alpha]}} & H^{1}(\mathbb{T}_{\varphi})\arrow{r}{[i_{0}^{*}]}\arrow{d}{\bullet\wedge[e^{-\lambda t}\alpha]} & K^{1}_{0}\arrow[r]\arrow{d}{\bullet\wedge[\alpha]} &0\\
	0\arrow[r] & C_{-\lambda}^{1}\arrow{r} & H^{2}_{-\lambda dt}(\mathbb{T}_{\varphi})\arrow{r}{[i_{0}^{*}]} & K_{-\lambda}^{2}\arrow[r] &0
\end{tikzcd}.
\end{equation}
\end{lemma}
\begin{proof}
Clearly, the second square commutes because of the equality $i_{0}^{*}(e^{-\lambda t}\alpha)=\alpha$. To show that the first square commutes, we argue as follows. For $c\in\mathbb{R}$, the class $c[dt]\in H^{1}(\mathbb{T}_{\varphi})$ gets mapped by $\bullet\wedge[e^{-\lambda t}\alpha]$ to $[cdt\wedge e^{-\lambda t}\alpha]\in H^{2}_{-\lambda dt}(\mathbb{T}_{\varphi})$. The latter lies in the kernel of $[i_{0}^{*}]$, hence it comes from a uniquely determined element in $C_{-\lambda}^{1}=\mathrm{coker}\left([\varphi^{*}-e^{\lambda}\mathrm{Id}]:H^{1}(L)\rightarrow H^{1}(L)\right)$.
To describe this element, let us denote by $\mathcal{V}$ the foliation by fibers of $\mathbb{T}_{\varphi}\rightarrow S^{1}$. According to the short exact sequence \eqref{eq:sh-ex}, the class $[cdt\wedge e^{-\lambda t}\alpha]\in H^{2}_{-\lambda dt}(\mathbb{T}_{\varphi})$ comes from an element in $\text{coker}\big(\partial: H^{1}(\mathcal{V})\rightarrow H^{1}(\mathcal{V})\big)$ via the map
\[
j:\Omega^{1}(\mathcal{V})\rightarrow\Omega^{2}(\mathbb{T}_{\varphi}):\beta\mapsto \lambda dt\wedge\widetilde{\beta},
\]
where $\widetilde{\beta}\in\Omega^{1}(\mathbb{T}_{\varphi})$ is any extension of $\beta\in\Omega^{1}(\mathcal{V})$. Clearly this element must be
\[
\left[\frac{c}{\lambda}e^{-\lambda t}\alpha\right]\ \text{mod}\ \text{im}(\partial)\in\text{coker}\big(\partial: H^{1}(\mathcal{V})\rightarrow H^{1}(\mathcal{V})\big).
\]
By Prop.~\ref{prop:coker} $ii)$, it corresponds to a class in $\mathrm{coker}\left([\varphi^{*}-e^{\lambda}\mathrm{Id}]:H^{1}(L)\rightarrow H^{1}(L)\right)$, namely the equivalence class of the element
\[
\int_{0}^{1}e^{\lambda s}\left[\frac{c}{\lambda}e^{-\lambda s}\alpha\right]ds=\frac{c}{\lambda}[\alpha]\in H^1(L).
\]
We now showed that the first vertical arrow in the diagram \eqref{eq:diagram-sequences} is defined appropriately, so that the first square in the diagram \eqref{eq:diagram-sequences} commutes. This proves the statement.
\end{proof}

As a consequence, we obtain a sufficient condition regarding the second requirement in Cor.~\ref{cor:req-rigidity}, namely injectivity of the map $\bullet\wedge[\omega_1]:H^{1}(\mathbb{T}_{\varphi})\rightarrow H^{2}_{\omega_{2}}(\mathbb{T}_{\varphi})$.

\begin{cor}\label{cor:cond-injectivity}
Consider a model Lie affine foliation $(\mathcal{G}_{\alpha,\lambda},\omega)$ on the mapping torus $\mathbb{T}_{\varphi}$. Assume that the following conditions are satisfied:
\begin{enumerate}
	\item $[\alpha]\notin\text{Im}\left([\varphi^{*}-e^{\lambda}\mathrm{Id}]:H^{1}(L)\rightarrow H^{1}(L)\right),$
	\item $\bullet\wedge[\alpha]:\ker\left([\varphi^{*}-\mathrm{Id}]:H^{1}(L)\rightarrow H^{1}(L)\right)\rightarrow H^{2}(L)$ is injective.
\end{enumerate}
It then follows that the map $\bullet\wedge[\omega_1]:H^{1}(\mathbb{T}_{\varphi})\rightarrow H^{2}_{\omega_{2}}(\mathbb{T}_{\varphi})$ is also injective.
\end{cor}
\begin{proof}
The assumptions state that the leftmost and rightmost vertical maps in the diagram \eqref{eq:diagram-sequences} are injective. The five lemma \cite[Chapter III, Ex.~15]{Lang} implies that also the vertical map in the middle $\bullet\wedge[e^{-\lambda t}\alpha]:H^{1}(\mathbb{T}_{\varphi})\rightarrow H^{2}_{-\lambda dt}(\mathbb{T}_{\varphi})$ is injective. This proves the statement.
\end{proof}
	
We obtain the following criterion for rigidity of model Lie affine foliations, which is useful in practice. It gives sufficient conditions ensuring that the requirements of Cor.~\ref{cor:req-rigidity} hold.
	
	\begin{cor}\label{cor:rigidity-model-interim}
	Consider a model Lie affine foliation $(\mathcal{G}_{\alpha,\lambda},\omega)$ on the mapping torus $\mathbb{T}_{\varphi}$. Assume that the following conditions are satisfied:
	\begin{enumerate}
		\item $\ker\left([\varphi^{*}-e^{\lambda}\mathrm{Id}]^{2}:H^{1}(L)\rightarrow H^{1}(L)\right)=\mathbb{R}[\alpha]$,
		\item $\bullet\wedge[\alpha]:\ker\left([\varphi^{*}-\mathrm{Id}]:H^{1}(L)\rightarrow H^{1}(L)\right)\rightarrow H^{2}(L)$ is injective.
	\end{enumerate}
	Then $H^{1}_{\omega_2}(\mathbb{T}_{\varphi})=\mathbb{R}[\omega_1]$ and $\bullet\wedge[\omega_1]:H^{1}(\mathbb{T}_{\varphi})\rightarrow H^{2}_{\omega_2}(\mathbb{T}_{\varphi})$ is injective, so $(\mathcal{G}_{\alpha,\lambda},\omega)$ is rigid.
	\end{cor}
	\begin{proof}
	The assumption that $\ker\left([\varphi^{*}-e^{\lambda}\mathrm{Id}]^{2}:H^{1}(L)\rightarrow H^{1}(L)\right)=\mathbb{R}[\alpha]$ implies that
	\begin{equation}\label{eq:conditions}
	\begin{cases}
		\ker\left([\varphi^{*}-e^{\lambda}\mathrm{Id}]:H^{1}(L)\rightarrow H^{1}(L)\right)=\mathbb{R}[\alpha]\\
		[\alpha]\notin\text{Im}\left([\varphi^{*}-e^{\lambda}\mathrm{Id}]:H^{1}(L)\rightarrow H^{1}(L)\right)
	\end{cases}.
	\end{equation}
	Now note that Cor.~\ref{cor:computed} $ii)$ gives an isomorphism in degree one
	\[
	H^{1}_{-\lambda dt}(\mathbb{T}_{\varphi})\overset{\sim}{\longrightarrow}\ker\left([\varphi^{*}-e^{\lambda}\mathrm{Id}]:H^{1}(L)\rightarrow H^{1}(L)\right),
	\]
	which maps $[e^{-\lambda t}\alpha]$ to $[\alpha]$. By the first condition in \eqref{eq:conditions}, we then get $H^{1}_{-\lambda dt}(\mathbb{T}_{\varphi})=\mathbb{R}[e^{-\lambda t}\alpha]$. Next, the assumption that $\bullet\wedge[\alpha]:\ker\left([\varphi^{*}-\mathrm{Id}]:H^{1}(L)\rightarrow H^{1}(L)\right)\rightarrow H^{2}(L)$ is injective along with the second condition in \eqref{eq:conditions} imply that $\bullet\wedge[e^{-\lambda t}\alpha]:H^{1}(\mathbb{T}_{\varphi})\rightarrow H^{2}_{-\lambda dt}(\mathbb{T}_{\varphi})$ is injective, by Cor.~\ref{cor:cond-injectivity}. So the conditions of Cor.~\ref{cor:req-rigidity} are met, hence $(\mathcal{G}_{\alpha,\lambda},\omega)$ is rigid.
	\end{proof}
	
	Rephrasing the first condition in Cor.~\ref{cor:rigidity-model-interim} yields a criterion that is even easier to check.

	\begin{cor}\label{cor:rigidity-model}
	Consider a model Lie affine foliation $(\mathcal{G}_{\alpha,\lambda},\omega)$ on the mapping torus $\mathbb{T}_{\varphi}$. Assume that the following conditions are satisfied:
	\begin{enumerate}
		\item The eigenvalue $e^{\lambda}$ of $[\varphi^{*}]:H^{1}(L)\rightarrow H^{1}(L)$ has algebraic multiplicity equal to $1$,
		\item The map $\bullet\wedge[\alpha]:\ker\left([\varphi^{*}-\mathrm{Id}]:H^{1}(L)\rightarrow H^{1}(L)\right)\rightarrow H^{2}(L)$ is injective.
	\end{enumerate}
	Then $H^{1}_{\omega_2}(\mathbb{T}_{\varphi})=\mathbb{R}[\omega_1]$ and $\bullet\wedge[\omega_1]:H^{1}(\mathbb{T}_{\varphi})\rightarrow H^{2}_{\omega_2}(\mathbb{T}_{\varphi})$ is injective, so $(\mathcal{G}_{\alpha,\lambda},\omega)$ is rigid.
	\end{cor}
	\begin{proof}
	For a model Lie affine foliation $(\mathcal{G}_{\alpha,\lambda},\omega)$ on $\mathbb{T}_{\varphi}$, the following are equivalent:
	\begin{enumerate}[i)]
		\item The eigenvalue $e^{\lambda}$ of $[\varphi^{*}]:H^{1}(L)\rightarrow H^{1}(L)$ has algebraic multiplicity equal to $1$,
		\item $\ker\left([\varphi^{*}-e^{\lambda}\mathrm{Id}]^{2}:H^{1}(L)\rightarrow H^{1}(L)\right)=\mathbb{R}[\alpha]$.
	\end{enumerate}
    To see this, denote by $b_{1}(L)$ the first Betti number of $L$. We will show in Lemma~\ref{lem:dim} that $b_{1}(L)\geq 2$. We get a chain of generalized eigenspaces in the vector space $H^{1}(L)$, given by
    \begin{equation}\label{eq:chain}
    \ker\big([\varphi^{*}-e^{\lambda}\mathrm{Id}]\big)\subset \ker\big([\varphi^{*}-e^{\lambda}\mathrm{Id}]^{2}\big)\subset\cdots\subset \ker\big([\varphi^{*}-e^{\lambda}\mathrm{Id}]^{b_{1}(L)}\big).
    \end{equation}
    It is well-known that the dimension of $\ker\big([\varphi^{*}-e^{\lambda}\mathrm{Id}]^{b_{1}(L)}\big)$ coincides with the algebraic multiplicity of the eigenvalue $e^{\lambda}$. We can now show that items $i)$ and $ii)$ above are equivalent.
    
    If the algebraic multiplicity of $e^{\lambda}$ is equal to $1$, then necessarily the chain \eqref{eq:chain} is constant equal to $\mathbb{R}[\alpha]$. In particular, we get that $\ker\big([\varphi^{*}-e^{\lambda}\mathrm{Id}]^{2}\big)=\mathbb{R}[\alpha]$. Conversely, assume that $\ker\big([\varphi^{*}-e^{\lambda}\mathrm{Id}]^{2}\big)=\mathbb{R}[\alpha]$. We then have that $\ker\big([\varphi^{*}-e^{\lambda}\mathrm{Id}]\big)=\ker\big([\varphi^{*}-e^{\lambda}\mathrm{Id}]^{2}\big)=\mathbb{R}[\alpha]$. So the chain \eqref{eq:chain} is constant equal to $\mathbb{R}[\alpha]$ and therefore $e^{\lambda}$ has algebraic multiplicity $1$.
	\end{proof}

	\subsubsection{Independence of Maurer-Cartan form}\label{subsub:independence}
	The following results are motivated by the study of rigidity in the particular setting of Lie affine foliations $(\mathcal{F},\zeta)$ on connected, compact, orientable manifolds $M$ of dimension $3$ or $4$. Say that $\mathcal{F}$ is defined by forms $\zeta_1,\zeta_2\in\Omega^{1}(M)$. The normal form results \cite{caron},\cite{Matsumoto} yield a foliated diffeomorphism $\phi:(\mathbb{T}_{\varphi},\mathcal{G}_{\alpha,\lambda})\rightarrow (M,\mathcal{F})$ for a certain model Lie affine foliation $(\mathbb{T}_{\varphi},\mathcal{G}_{\alpha,\lambda},\omega)$. This does not imply that $\phi^{*}\zeta_1$ and $\phi^{*}\zeta_2$ agree with the forms $\omega_1$ and 
	$\omega_2$ defined in \eqref{eq:model-forms}: these two pairs of forms merely define the same foliation $\mathcal{G}_{\alpha,\lambda}$. We now show that  $\phi^{*}\zeta_1$ and $\phi^{*}\zeta_2$ cannot differ too much from $\omega_1$ and $\omega_2$ defined in \eqref{eq:model-forms}. As a consequence, we can show that the pair $(\zeta_1,\zeta_2)$ satisfies the requirements of Cor.~\ref{cor:req-rigidity} if and only if $(\omega_1,\omega_2)$ does. This will allow us to deduce general rigidity results in dimension $3$ and $4$ from rigidity of the models  $(\mathcal{G}_{\alpha,\lambda},\omega)$ in question.

	\begin{prop}\label{prop:other-forms}
		Consider a model Lie affine foliation $(\mathcal{G}_{\alpha,\lambda},\omega)$. Let $\zeta_1,\zeta_2\in\Omega^{1}(\mathbb{T}_{\varphi})$ be independent one-forms such that $T\mathcal{G}_{\alpha,\lambda}=\ker\zeta_1\cap\ker\zeta_2$ and
		\begin{equation*}
			\begin{cases}
				d\zeta_1-\zeta_2\wedge\zeta_1=0\\
				d\zeta_2=0
			\end{cases}.
		\end{equation*}
		Then there exist $f,g\in C^{\infty}(S^{1})$ and $K\in\mathbb{R}_{0}$ such that
		\begin{equation*}
			\begin{cases}
				\zeta_2=\omega_2+df\\
				\zeta_1=e^{f}\left(K\omega_1+dg-g\omega_2\right)
			\end{cases}.
		\end{equation*}
	\end{prop}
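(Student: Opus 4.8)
The plan is to compare the two coframings through their transition matrix. Since $\ker\zeta_{1}\cap\ker\zeta_{2}=T\mathcal{G}_{\alpha,\lambda}=\ker\omega_{1}\cap\ker\omega_{2}$ and each pair is pointwise linearly independent, the one-forms $\zeta_{1},\zeta_{2}$ and $\omega_{1},\omega_{2}$ span the same rank-two subbundle $\langle\omega_{1},\omega_{2}\rangle\subset T^{*}\mathbb{T}_{\varphi}$, namely the annihilator of $T\mathcal{G}_{\alpha,\lambda}$. Hence there is a smooth $\mathrm{GL}_{2}(\mathbb{R})$-valued function on $\mathbb{T}_{\varphi}$ relating them; write $\zeta_{1}=a\omega_{1}+b\omega_{2}$ and $\zeta_{2}=p\omega_{1}+q\omega_{2}$ with $a,b,p,q\in C^{\infty}(\mathbb{T}_{\varphi})$ and $aq-bp$ nowhere zero. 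The statement will follow once I show that $p\equiv 0$, that $a,b,q$ are pulled back from the base coordinate $t\in S^{1}$, and then extract $f,g$ and $K$ from the structure equations.

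The key input is the following density observation, which is exactly where the hypothesis $\lambda\neq 0$ enters. Since $\varphi$ is a diffeomorphism, $\varphi_{*}$ is an automorphism of $H_{1}(L)$, so $\text{Per}(\varphi^{*}\alpha)=\text{Per}(\alpha)$; together with $\varphi^{*}\alpha=e^{\lambda}\alpha$ this gives $e^{\lambda}\,\text{Per}(\alpha)=\text{Per}(\alpha)$. As $\alpha$ is nowhere-vanishing and closed on the compact connected manifold $L$ it is not exact, so $\text{Per}(\alpha)$ is a non-trivial subgroup of $\mathbb{R}$; it cannot be cyclic, because $e^{\lambda}c\mathbb{Z}=c\mathbb{Z}$ would force $e^{\lambda}=1$. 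Therefore $\text{Per}(\alpha)$ is dense in $\mathbb{R}$, and by \cite[Thm.~9.3.13]{conlon} every leaf of the foliation $\ker\alpha$ on $L$ is dense. Since $T\mathcal{G}_{\alpha,\lambda}\subset\ker\omega_{2}=\ker dt$, the leaves of $\mathcal{G}_{\alpha,\lambda}$ are precisely the $\ker\alpha$-leaves inside the fibres $L_{t}\cong L$ of $\mathbb{T}_{\varphi}\to S^{1}$, so every leaf of $\mathcal{G}_{\alpha,\lambda}$ is dense in its fibre. Consequently, any $h\in C^{\infty}(\mathbb{T}_{\varphi})$ whose differential is a section of $\langle\omega_{1},\omega_{2}\rangle$ (equivalently, any $h$ constant along the leaves of $\mathcal{G}_{\alpha,\lambda}$) is the pullback of a function in $C^{\infty}(S^{1})$.

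Now I would run the computation. Substituting $\zeta_{2}=p\omega_{1}+q\omega_{2}$ into $d\zeta_{2}=0$, using $d\omega_{1}=\omega_{2}\wedge\omega_{1}$ and $d\omega_{2}=0$, and wedging the resulting identity with $\omega_{1}$ and with $\omega_{2}$, shows that $dp$ and $dq$ are sections of $\langle\omega_{1},\omega_{2}\rangle$ (since a one-form $\beta$ with $\beta\wedge\omega_{1}\wedge\omega_{2}=0$ lies in $\langle\omega_{1},\omega_{2}\rangle$); the same manipulation applied to $d\zeta_{1}=\zeta_{2}\wedge\zeta_{1}$ gives the same for $da$ and $db$. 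By the previous paragraph, $a,b,p,q$ are thus functions of $t$ alone. Plugging $\omega_{1}=e^{-\lambda t}\alpha$, $\omega_{2}=-\lambda\,dt$, $p=p(t)$, $q=q(t)$ back into $d\zeta_{2}=0$ yields $(p'-\lambda p)\,dt\wedge\omega_{1}=0$, hence $p(t)=p(0)e^{\lambda t}$; since $p$ is $1$-periodic in $t$ and $\lambda\neq 0$, this forces $p\equiv 0$, so $\zeta_{2}=q\,\omega_{2}$, where $q$ (and also $a$) is nowhere zero because $aq=aq-bp$ is. Feeding $p=0$ into $d\zeta_{1}=\zeta_{2}\wedge\zeta_{1}$ similarly gives $a'=\lambda(1-q)a$. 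A $1$-periodic solution of this linear equation exists only if $\int_{0}^{1}\lambda(1-q)\,dt=0$, i.e. $\int_{0}^{1}q\,dt=1$, so $f(t):=\int_{0}^{t}\lambda(1-q(s))\,ds$ descends to an element of $C^{\infty}(S^{1})$ with $\omega_{2}+df=-\lambda q\,dt=\zeta_{2}$; moreover $a=a(0)e^{f-f(0)}=Ke^{f}$ with $K=a(0)e^{-f(0)}\in\mathbb{R}_{0}$.

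It remains to produce $g$. Writing $\zeta_{1}=Ke^{f}\omega_{1}+b\omega_{2}$, the desired identity $\zeta_{1}=e^{f}(K\omega_{1}+dg-g\omega_{2})$ is equivalent to $e^{f}(dg-g\omega_{2})=b\omega_{2}$, which for $g=g(t)$ reduces to the linear ODE $g'+\lambda g=-\lambda b e^{-f}$ with a $1$-periodic right-hand side. Since $\lambda\neq 0$, this has a unique $1$-periodic solution $g\in C^{\infty}(S^{1})$ (cf. Lemma~\ref{lem:ode}: the homogeneous solutions $Ce^{-\lambda t}$ are periodic only for $C=0$, and the integrability obstruction involves $e^{-\lambda}-1\neq 0$). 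With this $g$ the required formulas hold, which finishes the proof. The main obstacle is the density observation of the second paragraph: without it the coefficients $a,b,p,q$ are only known to be leafwise constant rather than functions of $t$, and the remaining steps collapse; beyond that, one must keep track of the fact that it is precisely the $1$-periodicity in $t$ (again using $\lambda\neq 0$) that kills $p$ and turns $a$ into a constant multiple of $e^{f}$, which is where the constant $K$ comes from.
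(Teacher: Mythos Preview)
Your argument is correct and in fact streamlines the paper's proof. Both proofs hinge on the same density observation (leaves of $\ker\alpha$ are dense in each fibre $L$, forced by $\lambda\neq 0$), and both finish by solving the periodic ODE $g'+\lambda g=\ell$ for $g\in C^{\infty}(S^{1})$ (the paper isolates this as Lemma~\ref{lem:ode2}). The substantive difference lies in how one shows $[\zeta_{2}]=[\omega_{2}]$ in $H^{1}(\mathbb{T}_{\varphi})$. The paper first proves $\zeta_{2}$ is basic by a fibrewise critical-point argument, writes $\zeta_{2}=c\,dt+df$, and then invokes the Morse--Novikov machinery of \S\ref{sec:one} (Cor.~\ref{cor:nabla} and Prop.~\ref{prop:ker}) together with the nonvanishing of $[\zeta_{1}]\in H^{1}_{\zeta_{2}}(\mathbb{T}_{\varphi})$ to force $c=-\lambda$. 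Your transition-matrix approach bypasses this entirely: once the coefficients $a,b,p,q$ are seen to depend only on $t$ (via the wedge trick plus density), the structure equations reduce to ODEs in $t$, and the constraint $\int_{0}^{1}q\,dt=1$ drops out of the periodicity of the nowhere-zero coefficient $a$. This is more elementary and self-contained; the paper's route has the virtue of reusing the cohomological framework developed earlier, but yours shows that Prop.~\ref{prop:other-forms} does not actually depend on that framework.
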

	The statement says that $\zeta_2$ is basic with respect to the fiber foliation of $\mathbb{T}_{\varphi}\rightarrow S^{1}$ and that $[\zeta_2]=[\omega_2]$ in $H^{1}(S^{1})$. Moreover, a choice of primitive $f\in C^{\infty}(S^1)$ for $\zeta_2-\omega_2$ yields an isomorphism (see Prop.~\ref{prop:prelim}~$i)$)
	\begin{equation*}
		H^{\bullet}_{\zeta_2}(\mathbb{T}_\varphi)\rightarrow H^{\bullet}_{\omega_2}(\mathbb{T}_\varphi):[\eta]\mapsto [e^{-f}\eta],
	\end{equation*}
	under which $[\zeta_1]\in H^{1}_{\zeta_2}(\mathbb{T}_\varphi)$ corresponds with a non-zero multiple of $[\omega_1]\in H^{1}_{\omega_2}(\mathbb{T}_\varphi)$.
	
	\begin{proof}[Proof of Prop.~\ref{prop:other-forms}]
		We break down the proof into a few steps.
		
		\vspace{0.2cm}
		\noindent
		\underline{Step 1:} For every fiber $L$ of $\mathbb{T}_{\varphi}\rightarrow S^{1}$, the foliation $\mathcal{G}_{\alpha,\lambda}|_{L}$ has dense leaves.
		
		\vspace{0.1cm}
		Note that the foliation $\mathcal{G}_{\alpha,\lambda}|_{L}$ is given by the kernel of the closed one-form $\alpha\in\Omega^{1}(L)$. The claim follows if we show that the period group of $\alpha$ cannot be discrete. If it were discrete, then the foliation integrating $\ker\alpha$ would be given by a fibration $p:L\rightarrow S^{1}$. Since $\varphi^{*}\alpha=e^{\lambda}\alpha$, we have that $\varphi\in\text{Diff}(L)$ preserves the fibers of $p:L\rightarrow S^{1}$. It follows that $\varphi$ descends to a diffeomorphism $\overline{\varphi}\in\text{Diff}(S^{1})$ and $\alpha$ descends to $\overline{\alpha}\in\Omega^{1}(S^{1})$, such that $\overline{\varphi}^{*}\overline{\alpha}=e^{\lambda}\overline{\alpha}$. But then we would have
		\begin{equation}\label{eq:int}
			\int_{S^{1}}\overline{\alpha}=\int_{S^{1}}\overline{\varphi}^{*}\overline{\alpha}=e^{\lambda}\int_{S^{1}}\overline{\alpha}.
		\end{equation}
		Note that $\overline{\alpha}$ is not exact -- otherwise $\alpha\in\Omega^{1}(L)$ would also be exact, which is impossible since it is nowhere zero. Hence, the integral of $\overline{\alpha}$ in \eqref{eq:int} is nonzero, and therefore $\lambda=0$. This is a contradiction, hence the period group of $\alpha$ cannot be discrete.
		
		\vspace{0.2cm}
		\noindent
		\underline{Step 2:} $\zeta_2$ is basic with respect to the fiber foliation of $\mathbb{T}_{\varphi}\rightarrow S^{1}$.
		
		\vspace{0.1cm}
		The pullback of $\zeta_2$ to each fiber $L$ vanishes on the leaves of $\mathcal{G}_{\alpha,\lambda}|_{L}$, hence we can write $i_{L}^{*}\zeta_2=h\alpha$ for some $h\in C^{\infty}(L)$. Since both $i_{L}^{*}\zeta_2$ and $\alpha$ are closed, it follows that $dh\wedge\alpha=0$, which implies that $h$ is basic with respect to the foliation $\mathcal{G}_{\alpha,\lambda}|_{L}$ defined by $\alpha$. Since the leaves of $\mathcal{G}_{\alpha,\lambda}|_{L}$ are dense by Step 1, we get that $h$ is constant. Consequently, on the covering space $L\times\mathbb{R}$ of $\mathbb{T}_{\varphi}$, we can write
		\begin{equation}\label{eq:covering}
			\zeta_2=f(t)\alpha+g(x,t)dt
		\end{equation}
		for some $f\in C^{\infty}(\mathbb{R})$ and $g\in C^{\infty}(L\times\mathbb{R})$. We still have to impose that the form \eqref{eq:covering} is closed and that it descends to $\mathbb{T}_{\varphi}$, which amounts to invariance under the identification $(x,t)\sim(\varphi(x),t+1)$. The condition that $\zeta_2$ is closed yields
		\[
		0=\left(dg-f'(t)\alpha\right)\wedge dt.
		\]
		This means that the pullback $\iota_{L\times\{t\}}^{*}\left(dg-f'(t)\alpha\right)$ to any fiber $L\times\{t\}$ vanishes. Because $\iota_{L\times\{t\}}^{*}dg$ vanishes at some point by compactness of $L$ and $\alpha$ is nowhere vanishing, this implies that $f'(t)=0$. In turn, we also get that $g$ only depends on the $t$-coordinate. Hence,
		\[
		\zeta_2=K\alpha+g(t)dt
		\]
		for $K\in\mathbb{R}$ and $g\in C^{\infty}(\mathbb{R})$, which is an equality of forms on $L\times\mathbb{R}$. Requiring that $\zeta_2$ descends to $\mathbb{T}_{\varphi}$ gives $\varphi^{*}(K\alpha)=K\alpha$ and $g(t+1)=g(t)$ for all $t\in\mathbb{R}$. However, since $\varphi^{*}\alpha=e^{\lambda}\alpha$ with $\lambda\neq 0$, it follows that $K=0$. Therefore, $\zeta_2=g(t)dt$ for some $g\in C^{\infty}(S^1)$.
		
		\vspace{0.2cm}
		\noindent
		\underline{Step 3:} We have that $\zeta_2=\omega_2+df$ for some $f\in C^{\infty}(S^1)$.
		
		\vspace{0.1cm}
		As $\zeta_2$ is closed and basic with respect to $\mathbb{T}_{\varphi}\rightarrow S^{1}$ by Step 2, we can write $\zeta_2=cdt+df$ for some $c\in\mathbb{R}_{0}$ and $f\in C^{\infty}(S^1)$. We have to show that necessarily $c=-\lambda$. 
		
		On the one hand, recall that $\zeta_1$ defines a non-zero class in $H^{1}_{\zeta_2}(\mathbb{T}_\varphi)$ (see item ii) in the text preceding Cor.~\ref{cor:injective}). 
		Hence, under the isomorphism 
		\begin{equation*}
			H^{\bullet}_{cdt+df}(\mathbb{T}_\varphi)\rightarrow H^{\bullet}_{cdt}(\mathbb{T}_\varphi):[\eta]\mapsto [e^{-f}\eta],
		\end{equation*}
		we get a non-zero class $[e^{-f}\zeta_1]\in H^{1}_{cdt}(\mathbb{T}_\varphi)$. Combining Cor.~\ref{cor:nabla} and Prop.~\ref{prop:ker}, we have an isomorphism
		\begin{equation}
			H^{1}_{cdt}(\mathbb{T}_\varphi)\rightarrow\ker\left([\varphi^{*}-e^{-c}\mathrm{Id}]:H^{1}(L)\rightarrow H^{1}(L)\right):[\eta]\mapsto \sigma_{r(\eta)}(0),
		\end{equation}
		where $r:\Omega^{\bullet}(\mathbb{T}_{\varphi})\rightarrow\Omega^{\bullet}(\mathcal{V})$ is the restriction to the fiber foliation $\mathcal{V}$ of $\mathbb{T}_{\varphi}\rightarrow S^{1}$. Hence, we get that $\sigma_{r(e^{-f}\zeta_1)}(0)\in H^{1}(L)$ is a non-zero element in the $e^{-c}$-eigenspace of $[\varphi^{*}]$.
		
		On the other hand, we know that $d\zeta_1-\zeta_2\wedge\zeta_1=0$ and that the kernel of $\zeta_2$ is the vertical distribution $\mathcal{V}$ of $\mathbb{T}_{\varphi}\rightarrow S^{1}$. It follows that $r(\zeta_1)\in\Omega^{1}(\mathcal{V})$ is closed on every fiber $L$ of $\mathbb{T}_{\varphi}\rightarrow S^{1}$. Since $f\in C^{\infty}(S^{1})$, the same holds for $r(e^{-f}\zeta_1)\in\Omega^{1}(\mathcal{V})$. Since $\zeta_1$ vanishes on  $\mathcal{G}_{\alpha,\lambda}$ and the leaves of $\mathcal{G}_{\alpha,\lambda}|_{L}$ are dense in each fiber $L$, it follows that $r(e^{-f}\zeta_1)|_{L}=K\alpha$ for some $K\in\mathbb{R}$. In particular, the class $\sigma_{r(e^{-f}\zeta_1)}(0)\in H^{1}(L)$ is a multiple of $[\alpha]$, hence it belongs to the $e^{\lambda}$-eigenspace of $[\varphi^{*}]$. Altogether, this shows that necessarily $c=-\lambda$.
		
		\vspace{0.2cm}
		\noindent
		\underline{Step 4:} We have that $\zeta_1=e^{f}\left(K\omega_1+dg-g\omega_2\right)$ for some $K\in\mathbb{R}_{0}$ and $g\in C^{\infty}(S^1)$.
		
		\vspace{0.1cm}
		
		By the last paragraph in Step 3, we can write 
		\[
		e^{-f}\zeta_1=h(t)\alpha+l(x,t)dt,
		\]
		which is an equality of forms on $L\times\mathbb{R}$. We also know that this form is closed with respect to the Morse-Novikov differential $d_{-\lambda dt}$ and that it descends to $\mathbb{T}_{\varphi}$. Closedness with respect to $d_{-\lambda dt}$ yields
		\[
		\left(dl-h'(t)\alpha\right)\wedge dt+\lambda dt\wedge h(t)\alpha=0.
		\]
		This means that the pullback $\iota_{L\times\{t\}}^{*}\left(dl-h'(t)\alpha-\lambda h(t)\alpha\right)$ to any fiber $L\times\{t\}$ vanishes. Because $\iota_{L\times\{t\}}^{*}dl$ vanishes at some point by compactness of $L$ and $\alpha$ is nowhere vanishing, this implies that
		$$
		h'(t)=-\lambda h(t),
		$$
		and therefore $h(t)=K e^{-\lambda t}$ for some $K\in\mathbb{R}_{0}$. It also follows that the function $l$ only depends on the $t$-coordinate. Hence,
		\[
		e^{-f}\zeta_1=K e^{-\lambda t}\alpha+l(t)dt.
		\]
		Since this form descends to $\mathbb{T}_{\varphi}$, it is invariant under the identification $(x,t)\sim(\varphi(x),t+1)$. It follows that $l(t+1)=l(t)$ for all $t\in\mathbb{R}$, i.e. $l\in C^{\infty}(S^{1})$.  The proof is finished if we can find $g\in C^{\infty}(S^{1})$ such that
		\begin{equation}\label{eq:ode}
			l=g'+\lambda g.
		\end{equation}
		Indeed, we then have that
		\begin{align*}
			\zeta_1&=e^{f}\left(K e^{-\lambda t}\alpha+g'(t)dt+\lambda g(t)dt\right)\\
			&=e^{f}\left(K\omega_1+dg-g\omega_2\right).
		\end{align*}
		In Lemma~\ref{lem:ode2} below we confirm that the differential equation \eqref{eq:ode} can indeed be solved for $g\in C^{\infty}(S^1)$. This finishes the last step of the proof.
	\end{proof}

	\begin{lemma}\label{lem:ode2}
		For every $\lambda\in\mathbb{R}_{0}$ and $l\in C^{\infty}(S^1)$, there is a unique function $g\in C^{\infty}(S^1)$ such that 
		\begin{equation}\label{eq:to-solve}
			l=g'+\lambda g.
		\end{equation}
	\end{lemma}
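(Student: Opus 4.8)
The plan is to solve the linear first-order ODE \eqref{eq:to-solve} on $\mathbb{R}$ using Lemma~\ref{lem:ode}, and then single out the unique solution that descends to $S^1$. The hypothesis $\lambda\neq 0$ will be used precisely to guarantee $e^{-\lambda}\neq 1$, which is what makes both existence and uniqueness work.

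For uniqueness, suppose $g_1,g_2\in C^{\infty}(S^1)$ both satisfy \eqref{eq:to-solve}. Then $h:=g_1-g_2$ solves the homogeneous equation $h'+\lambda h=0$, so $h(t)=h(0)e^{-\lambda t}$. Periodicity forces $h(0)=h(0)e^{-\lambda}$, and since $e^{-\lambda}\neq 1$ this gives $h(0)=0$, hence $g_1=g_2$.

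For existence, rewrite \eqref{eq:to-solve} as $l=g'-(-\lambda)g$ and apply Lemma~\ref{lem:ode} with the constant $-\lambda$ and data $l$. This shows that every $C^{\infty}(\mathbb{R})$ solution has the form
\[
g(t)=e^{-\lambda t}\left(\int_{0}^{t}l(s)e^{\lambda s}\,ds+K\right),\qquad K\in\mathbb{R}.
\]
First I would choose $K$ so that $g(0)=g(1)$. Since $g(0)=K$ and $g(1)=e^{-\lambda}\big(\int_{0}^{1}l(s)e^{\lambda s}\,ds+K\big)$, this condition reads $K(1-e^{-\lambda})=e^{-\lambda}\int_{0}^{1}l(s)e^{\lambda s}\,ds$, which has a (unique) solution because $1-e^{-\lambda}\neq 0$.

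It then remains to upgrade the single equality $g(0)=g(1)$ to full $1$-periodicity of $g$, and this is the only mildly delicate point. The idea is that the shifted function $t\mapsto g(t+1)$ again solves \eqref{eq:to-solve}: indeed $\frac{d}{dt}g(t+1)+\lambda g(t+1)=l(t+1)=l(t)$ by periodicity of $l$. Two solutions of the first-order linear ODE \eqref{eq:to-solve} that agree at $t=0$ agree on all of $\mathbb{R}$, so $g(t+1)=g(t)$ for every $t$, and $g$ descends to a function on $S^1$. Smoothness is immediate since $g$ is assembled from $l$ by integration and multiplication by the smooth functions $e^{\pm\lambda t}$. Everything apart from this last argument is a direct computation.
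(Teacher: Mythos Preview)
Your proof is correct and follows the same overall strategy as the paper: write down the general real-line solution via Lemma~\ref{lem:ode} and use $e^{-\lambda}\neq 1$ to pin down the integration constant. The execution differs in two minor ways. For uniqueness, the paper evaluates the homogeneous equation $g'+\lambda g=0$ at a maximum and a minimum of $g$ (where $g'=0$) to force $g\equiv 0$, whereas you solve the homogeneous equation explicitly and compare values at $0$ and $1$; both are equally valid. For existence, the paper verifies $g(t+1)=g(t)$ for \emph{all} $t$ by a direct computation that splits $\int_0^{t+1}$ and uses periodicity of $l$, arriving at the same value of the constant; your bootstrap from the single equality $g(0)=g(1)$ to full periodicity via uniqueness of ODE solutions is a clean shortcut that replaces that computation.
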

	\begin{proof}
		To prove uniqueness, it suffices to show that the homogeneous equation
		\begin{equation}\label{eq:homogeneous}
			g'+\lambda g=0
		\end{equation}
		only admits the zero solution $g\equiv 0$. To do so, note that any $g\in C^{\infty}(S^{1})$ reaches a minimum $m$ and a maximum $M$. If moreover $g$ satisfies the equation \eqref{eq:homogeneous}, then we get
		\[
		m\lambda=M\lambda=0.
		\]
		Since $\lambda\neq 0$, it follows that $m=M=0$ and therefore $g\equiv 0$.
		
		To prove existence, we already know from Lemma~\ref{lem:ode} that every not necessarily periodic solution $g\in C^{\infty}(\mathbb{R})$ to \eqref{eq:to-solve} is of the form
		\[
		g(t)=e^{-\lambda t}\left(\int_{0}^{t}l(s)e^{\lambda s}ds+C\right)
		\]
		for some $C\in\mathbb{R}$. We now show that we can fix the constant $C$ so that $g$ becomes periodic. Note that $g(t+1)=g(t)$ for all $t\in\mathbb{R}$ exactly when
		\begin{align}\label{eq:integrals}
			&e^{-\lambda}\left(\int_{0}^{t+1}l(s)e^{\lambda s}ds+C\right)=\int_{0}^{t}l(s)e^{\lambda s}ds+C\nonumber\\
			\Leftrightarrow &\ e^{-\lambda}\left(\int_{0}^{1}l(s)e^{\lambda s}ds\right)+e^{-\lambda}\left(\int_{1}^{t+1}l(s)e^{\lambda s}ds\right)+e^{-\lambda}C=\int_{0}^{t}l(s)e^{\lambda s}ds+C\nonumber\\
			\Leftrightarrow &\ e^{-\lambda}\left(\int_{0}^{1}l(s)e^{\lambda s}ds\right)+e^{-\lambda}\left(\int_{0}^{t}l(z+1)e^{\lambda z + \lambda}dz\right)+e^{-\lambda}C=\int_{0}^{t}l(s)e^{\lambda s}ds+C\nonumber\\
			\Leftrightarrow &\ e^{-\lambda}\left(\int_{0}^{1}l(s)e^{\lambda s}ds\right)+\int_{0}^{t}l(z)e^{\lambda z}dz+e^{-\lambda}C=\int_{0}^{t}l(s)e^{\lambda s}ds+C,
			\end{align}
			using that $l(z)=l(z+1)$ for all $z\in\mathbb{R}$ since $l\in C^{\infty}(S^1)$. By \eqref{eq:integrals}, we necessarily have that
			\[ C=\frac{e^{-\lambda}}{1-e^{-\lambda}}\int_{0}^{1}l(s)e^{\lambda s}ds.
			\]
		In conclusion, the unique solution $g\in C^{\infty}(S^1)$ to \eqref{eq:to-solve} is given by
		\[
		g(t)=e^{-\lambda t}\left(\int_{0}^{t}l(s)e^{\lambda s}ds+\frac{e^{-\lambda}}{1-e^{-\lambda}}\int_{0}^{1}l(s)e^{\lambda s}ds\right).\qedhere
		\]
	\end{proof}
	
	We can now show that for model Lie affine foliations $(\mathcal{G}_{\alpha,\lambda},\omega)$, the rigidity requirements of Cor.~\ref{cor:req-rigidity} are independent of the choice of Maurer-Cartan form $\omega$ defining $\mathcal{G}_{\alpha,\lambda}$.

	\begin{cor}\label{cor:equiv-rigidity}
		Consider a model Lie affine foliation $(\mathcal{G}_{\alpha,\lambda},\omega)$. Let $\zeta_1,\zeta_2\in\Omega^{1}(\mathbb{T}_{\varphi})$ be independent one-forms such that $T\mathcal{G}_{\alpha,\lambda}=\ker\zeta_1\cap\ker\zeta_2$ and
		\begin{equation*}
			\begin{cases}
				d\zeta_1-\zeta_2\wedge\zeta_1=0\\
				d\zeta_2=0
			\end{cases}.
		\end{equation*}
		Then $(\omega_1,\omega_2)$ satisfies the requirements of Cor.~\ref{cor:req-rigidity} if and only if $(\zeta_1,\zeta_2)$ does.
	\end{cor}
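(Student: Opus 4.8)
The plan is to reduce everything to Prop.~\ref{prop:other-forms}, which already describes exactly how the pair $(\zeta_1,\zeta_2)$ can differ from $(\omega_1,\omega_2)$: there are $f,g\in C^{\infty}(S^1)$ and $K\in\mathbb{R}_0$ with $\zeta_2=\omega_2+df$ and $\zeta_1=e^{f}\bigl(K\omega_1+dg-g\omega_2\bigr)$. Since $f$ and $g$ are pulled back along $\mathbb{T}_{\varphi}\to S^1$, they are honest smooth functions on $M=\mathbb{T}_{\varphi}$, so Prop.~\ref{prop:prelim}~i) applies: multiplication by $e^{-f}$ is a cochain isomorphism $\bigl(\Omega^{\bullet}(M),d_{\zeta_2}\bigr)\to\bigl(\Omega^{\bullet}(M),d_{\omega_2}\bigr)$ and hence induces isomorphisms $\Psi\colon H^{\bullet}_{\zeta_2}(M)\overset{\sim}{\longrightarrow}H^{\bullet}_{\omega_2}(M)$. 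First I would record that $e^{-f}\zeta_1=K\omega_1+(dg-g\omega_2)=K\omega_1+d_{\omega_2}g$, so that $\Psi[\zeta_1]=K[\omega_1]$ is a nonzero multiple of $[\omega_1]$ in $H^{1}_{\omega_2}(M)$.

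For the first requirement of Cor.~\ref{cor:req-rigidity}, the isomorphism $\Psi$ transports the condition $H^{1}_{\zeta_2}(M)=\mathbb{R}[\zeta_1]$ into $H^{1}_{\omega_2}(M)=\mathbb{R}\cdot K[\omega_1]=\mathbb{R}[\omega_1]$, where the last equality uses $K\neq 0$. Hence the first requirement holds for $(\zeta_1,\zeta_2)$ if and only if it holds for $(\omega_1,\omega_2)$.

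For the second requirement, I would show that the composite $\Psi\circ(\bullet\wedge[\zeta_1])\colon H^{1}(M)\to H^{2}_{\omega_2}(M)$ agrees with $K$ times the map $\bullet\wedge[\omega_1]\colon H^{1}(M)\to H^{2}_{\omega_2}(M)$. For a closed one-form $\beta\in\Omega^{1}(M)$ we have $e^{-f}(\beta\wedge\zeta_1)=\beta\wedge\bigl(K\omega_1+d_{\omega_2}g\bigr)$, and since $d\beta=0$ a short Leibniz computation gives $\beta\wedge d_{\omega_2}g=-d_{\omega_2}(g\beta)$; therefore $\Psi[\beta\wedge\zeta_1]=K[\beta\wedge\omega_1]$ in $H^{2}_{\omega_2}(M)$. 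Because $K\neq 0$ and $\Psi$ is an isomorphism, $\bullet\wedge[\zeta_1]$ is injective exactly when $\bullet\wedge[\omega_1]$ is. Combining the two items proves the corollary.

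The argument is essentially formal once Prop.~\ref{prop:other-forms} is available, so I do not anticipate a genuine obstacle; the only points needing a moment's care are the sign identity $\beta\wedge d_{\omega_2}g=-d_{\omega_2}(g\beta)$ for closed $\beta$ and the observation that $f,g$ descend to smooth functions on $M$ so that Prop.~\ref{prop:prelim}~i) may be invoked.
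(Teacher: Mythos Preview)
Your proposal is correct and follows essentially the same approach as the paper: both invoke Prop.~\ref{prop:other-forms}, transport classes via the isomorphism $[\eta]\mapsto[e^{-f}\eta]$, and verify that the cross term $\beta\wedge(dg-g\omega_2)$ is $d_{\omega_2}$-exact (the paper writes this as $d(g\gamma)-\omega_2\wedge g\gamma=-\gamma\wedge(dg-g\omega_2)$, which is your identity in different notation). The only difference is cosmetic: the paper packages the second requirement as a commutative triangle, while you state it as an equality of maps.
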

	\begin{proof}
		By Prop.~\ref{prop:other-forms}, we know that there exist $f,g\in C^{\infty}(S^{1})$ and $K\in\mathbb{R}_{0}$ such that
		\begin{equation*}
			\begin{cases}
				\zeta_2=\omega_2+df\\
				\zeta_1=e^{f}\left(K\omega_1+dg-g\omega_2\right)
			\end{cases}.
		\end{equation*}
		This means that under the isomorphism
		\begin{equation*}
			H^{\bullet}_{\zeta_2}(\mathbb{T}_\varphi)\rightarrow H^{\bullet}_{\omega_2}(\mathbb{T}_\varphi):[\eta]\mapsto [e^{-f}\eta],
		\end{equation*}
		we have that $[\zeta_1]\in H^{1}_{\zeta_2}(\mathbb{T}_\varphi)$ corresponds with $K[\omega_1]\in H^{1}_{\omega_2}(\mathbb{T}_\varphi)$. Hence, $H^{1}_{\omega_2}(\mathbb{T}_\varphi)=\mathbb{R}[\omega_1]$ if and only if $H^{1}_{\zeta_2}(\mathbb{T}_\varphi)=\mathbb{R}[\zeta_1]$. Next, note that we get a commutative diagram
		\begin{equation}\label{eq:diag3}
			\begin{tikzcd}[row sep=large, column sep=large]
				& & H^{2}_{\omega_2}(\mathbb{T}_\varphi)\\
				&H^{1}(\mathbb{T}_\varphi)\arrow[swap]{r}{\bullet\wedge[\zeta_1]}\arrow{ru}{\bullet\wedge K[\omega_1]} & H^{2}_{\zeta_2}(\mathbb{T}_\varphi)\arrow[swap]{u}{[\eta]\mapsto [e^{-f}\eta]}
			\end{tikzcd}.
		\end{equation}
		Indeed, following the bottom arrow and then the right arrow, we get
		\[
		H^{1}(\mathbb{T}_\varphi)\rightarrow H^{2}_{\omega_2}(\mathbb{T}_\varphi):[\gamma]\mapsto [\gamma\wedge K\omega_1]+[\gamma\wedge (dg-g\omega_2)].
		\]
		The second summand is trivial because
		\[
		d(g\gamma)-\omega_2\wedge g\gamma=-\gamma\wedge (dg-g\omega_2).
		\]
		So the diagram \eqref{eq:diag3} commutes. Because the right arrow is an isomorphism and $K\neq 0$, it follows that injectivity of the map $\bullet\wedge[\omega_1]:H^{1}(\mathbb{T}_\varphi)\rightarrow H^{2}_{\omega_2}(\mathbb{T}_\varphi)$ is equivalent with injectivity of the map $\bullet\wedge[\zeta_1]:H^{1}(\mathbb{T}_\varphi)\rightarrow H^{2}_{\zeta_2}(\mathbb{T}_\varphi)$. This finishes the proof.
	\end{proof}
	
	Consequently, to prove that a Lie affine foliation of the form $(\mathcal{G}_{\alpha,\lambda},\zeta)$ is rigid, it suffices to check that the model $(\mathcal{G}_{\alpha,\lambda},\omega)$ satisfies the requirements of Cor.~\ref{cor:req-rigidity}. This in turn can often be done in a straightforward way by checking the sufficient conditions stated in Cor.~\ref{cor:rigidity-model}.

	\subsubsection{Non-rigidity of model foliations}\label{subsub:non-rigid}
	We showed in \S\ref{subsub:rigidity-models} that a model Lie affine foliation $(\mathcal{G}_{\alpha,\lambda},\omega)$ can be rigid when deformed as a Lie foliation, namely if the conditions of Cor.~\ref{cor:rigidity-model} are satisfied. However, the foliation $\mathcal{G}_{\alpha,\lambda}$ on the mapping torus $\mathbb{T}_{\varphi}$ is never rigid when it is deformed merely as a foliation. We will now spell this out.
	
	\vspace{0.3cm}
	\noindent
	\underline{\emph{Background on deformation theory of foliations:}}
	Let $M$ be compact and $\mathcal{F}$ a $k$-dimensional foliation on $M$. In what follows, we will study smooth paths of foliations $\mathcal{F}_t$ with $\mathcal{F}_{0}=\mathcal{F}$. We say that $\mathcal{F}$ is rigid if for every such deformation $\mathcal{F}_t$ of $\mathcal{F}$, there is an isotopy $(\phi_t)$ of $M$ such that $T\mathcal{F}_t=(\phi_t)_{*}T\mathcal{F}$ for small $t$. We now introduce a suitable cohomology group whose vanishing can be interpreted heuristically as the infinitesimal requirement for rigidity.

	The normal bundle $N\mathcal{F}:=TM/T\mathcal{F}$ carries a flat $T\mathcal{F}$-connection $\nabla$, called the Bott connection, which is defined by
	\begin{equation}\label{eq:bott}
		\nabla_{X}\overline{Y}=\overline{[X,Y]},\hspace{1cm} X\in\Gamma(T\mathcal{F}), \overline{Y}\in\Gamma(N\mathcal{F}).
	\end{equation}
	We obtain a complex $\big(\Omega^{\bullet}(\mathcal{F};N\mathcal{F}),d_{\nabla}\big)$, where $\Omega^{k}(\mathcal{F};N\mathcal{F}):=\Gamma(\wedge^{k}T^{*}\mathcal{F}\otimes N\mathcal{F})$ are leafwise $k$-forms with coefficients in $N\mathcal{F}$, and the differential $d_{\nabla}$ is defined by
	\begin{align*}
		d_{\nabla}\eta(V_0,\ldots,V_k)&=\sum_{i=0}^{k}(-1)^{i}\nabla_{V_i}\big(\eta(V_0,\ldots,V_{i-1},\widehat{V_i},V_{i+1},\ldots,V_k)\big)\\
		&\hspace{0.5cm}+\sum_{i<j}(-1)^{i+j}\eta\big([V_i,V_j],V_0,\ldots,\widehat{V_i},\ldots,\widehat{V_j},\ldots,V_k\big).
	\end{align*}
	Heitsch \cite{heitsch} showed that the infinitesimal deformation of $\mathcal{F}$ induced by a smooth deformation $\mathcal{F}_t$ is a one-cocycle in  $\big(\Omega^{\bullet}(\mathcal{F};N\mathcal{F}),d_{\nabla}\big)$. Moreover, if the path $\mathcal{F}_t$ is obtained by applying an isotopy to $\mathcal{F}$, then the corresponding infinitesimal deformation is a one-coboundary. In more detail, let us identify $N\mathcal{F}$ with a subbundle of $TM$ complementary to $T\mathcal{F}$. Since $M$ is compact, the distribution $T\mathcal{F}_t$ is still transverse to $N\mathcal{F}$ for small enough $t$. Hence there exist $\eta_t\in\Gamma(T^{*}\mathcal{F}\otimes N\mathcal{F})$ such that
	\[
	T\mathcal{F}_t=\text{Graph}(\eta_t)=\{V+\eta_t(V):\ V\in\Gamma(T\mathcal{F})\}.
	\]
	Moreover, we need to require that the distribution $\text{Graph}(\eta_t)$ is involutive, so that it indeed integrates to a foliation $\mathcal{F}_t$. This gives a non-linear equation in $\eta_t$, with linearization
	\[
	d_{\nabla}\dt{\eta_0}=0.
	\]
	If the path $\mathcal{F}_t$ is generated by an isotopy $(\phi_t)$, i.e. $T\mathcal{F}_t=(\phi_t)_{*}T\mathcal{F}$, then the corresponding infinitesimal deformation is exact. Indeed,
	\[
	\dt{\eta_0}=d_{\nabla}(\overline{V_0}),
	\]
	where $(V_t)$ is the time-dependent vector field corresponding with the isotopy $(\phi_t)$. It follows that, loosely speaking, one can interpret the vanishing of the cohomology group $H^1(\mathcal{F},N\mathcal{F})$ as the infinitesimal requirement for rigidity of $\mathcal{F}$.

	\vspace{0.3cm}
	\noindent
	\underline{\emph{Model foliations are not rigid:}}
	Let us now consider a model Lie affine foliation $(\mathcal{G}_{\alpha,\lambda},\omega)$. We will show that the foliation $\mathcal{G}_{\alpha,\lambda}$ on $\mathbb{T}_{\varphi}$ is not rigid when deformed merely as a foliation.
	
	\begin{lemma}\label{lem:dim}
		The fiber $L$ of $\mathbb{T}_{\varphi}\rightarrow S^{1}$ necessarily satisfies $\dim H^{1}(L)\geq 2$.
	\end{lemma}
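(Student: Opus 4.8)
The plan is to argue by contradiction: the bound $\dim H^{1}(L)\geq 1$ is essentially free, and I would rule out $\dim H^{1}(L)=1$ by playing the eigenvalue relation $\varphi^{*}[\alpha]=e^{\lambda}[\alpha]$ against the integrality of the $\varphi^{*}$-action on cohomology.

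First I would record that $[\alpha]\neq 0$ in $H^{1}(L)$. Since $L$ is compact and connected, an exact one-form $df$ vanishes at any critical point of $f$, so the nowhere-vanishing closed form $\alpha$ cannot be exact; hence $\dim H^{1}(L)\geq 1$. Next, suppose $\dim H^{1}(L)=1$, so that $H^{1}(L)=\mathbb{R}[\alpha]$. Passing to cohomology classes in $\varphi^{*}\alpha=e^{\lambda}\alpha$ gives $\varphi^{*}[\alpha]=e^{\lambda}[\alpha]$, so the automorphism $\varphi^{*}$ of the one-dimensional space $H^{1}(L)$ is multiplication by $e^{\lambda}$. On the other hand $\varphi^{*}$ also acts on the integral cohomology $H^{1}(L;\mathbb{Z})$, which is free abelian of rank $\dim H^{1}(L)=1$ and sits as a full lattice inside $H^{1}(L;\mathbb{R})$ compatibly with $\varphi^{*}$; with respect to a generator of that lattice, $\varphi^{*}$ is therefore an element of $GL_{1}(\mathbb{Z})=\{\pm 1\}$. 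So $e^{\lambda}=\pm 1$, and since $e^{\lambda}>0$ this forces $e^{\lambda}=1$, i.e. $\lambda=0$, contradicting $\lambda\neq 0$. Combined with $\dim H^{1}(L)\geq 1$, this yields $\dim H^{1}(L)\geq 2$.

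The only delicate point is the naturality claim that the real automorphism $\varphi^{*}$ restricts to the automorphism induced by $\varphi$ on the lattice $H^{1}(L;\mathbb{Z})\hookrightarrow H^{1}(L;\mathbb{R})$; this follows from functoriality of $\varphi^{*}$ together with compatibility of the change-of-coefficients map, exactly the sort of argument used in the remark following Cor.~\ref{cor:computed}. Should one prefer to avoid integral cohomology altogether, an alternative route is supplied by Step 1 in the proof of Prop.~\ref{prop:other-forms}: there the period group $\mathrm{Per}(\alpha)$ is shown to be non-discrete, and since it is the image of the finitely generated group $H_{1}(L;\mathbb{Z})$ under $[\gamma]\mapsto\int_{\gamma}\alpha$, and a non-discrete finitely generated subgroup of $(\mathbb{R},+)$ has rank at least $2$, one concludes $\operatorname{rank}H_{1}(L;\mathbb{Z})\geq 2$; if $\dim H^{1}(L)$ were $1$ the image would be cyclic and hence discrete, a contradiction. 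Either way, the ``main obstacle'' is merely the bookkeeping of integrality/naturality — all the geometric content sits in the relation $\varphi^{*}\alpha=e^{\lambda}\alpha$ with $\lambda\neq 0$.
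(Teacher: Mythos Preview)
Your argument is correct. The paper takes your second, ``alternative'' route: it observes that if $\dim H^{1}(L)=1$ then $[\alpha]$ is a real multiple of an integral class, so $\mathrm{Per}(\alpha)$ is discrete and the foliation $\ker\alpha$ on $L$ is a fibration, contradicting the density established in Step~1 of Prop.~\ref{prop:other-forms}. Your primary argument is more direct and avoids any appeal to the geometry of the leaf closures: under the hypothesis $\dim H^{1}(L)=1$ you read off immediately that $\varphi^{*}$ acts on the rank-one lattice $H^{1}(L;\mathbb{Z})$ by $\pm 1$, which is incompatible with the eigenvalue relation $\varphi^{*}[\alpha]=e^{\lambda}[\alpha]$ for $\lambda\neq 0$. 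Both approaches ultimately rest on the same integrality principle (indeed, Step~1 of Prop.~\ref{prop:other-forms} itself concludes by forcing $\lambda=0$ from an integral over $S^{1}$), but your version isolates that principle in one line, whereas the paper's version has the advantage of reusing a geometric fact (density of $\mathcal{G}_{\alpha,\lambda}|_{L}$) that is already needed elsewhere.
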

	\begin{proof}
		If we would have $\dim H^{1}(L)=1$, then necessarily the class $[\alpha]\in H^{1}(L)$ would be a real multiple of an integral cohomology class. Hence the restricted foliation $\mathcal{G}_{\alpha,\lambda}|_{L}$ on $L$ defined by $\alpha\in\Omega^{1}(L)$ would be given by the fibers of a fibration. This contradicts that the leaves of $\mathcal{G}_{\alpha,\lambda}|_{L}$ are dense in $L$, see Step 1 in the proof of Prop.~\ref{prop:other-forms}.
	\end{proof}
	
	\begin{prop}\label{prop:not-rigid}
		The foliation $\mathcal{G}_{\alpha,\lambda}$ on $\mathbb{T}_{\varphi}$ is not rigid.
	\end{prop}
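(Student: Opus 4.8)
The plan is to produce an explicit smooth path of foliations $\mathcal{G}_s$ on $\mathbb{T}_\varphi$ with $\mathcal{G}_0=\mathcal{G}_{\alpha,\lambda}$ whose induced infinitesimal deformation is a $d_\nabla$-cocycle in $\big(\Omega^\bullet(\mathcal{G}_{\alpha,\lambda};N\mathcal{G}_{\alpha,\lambda}),d_\nabla\big)$ that is \emph{not} $d_\nabla$-exact. By Heitsch's result \cite{heitsch} recalled above, if $\mathcal{G}_{\alpha,\lambda}$ were rigid then for this path there would be an isotopy $(\phi_s)$ with $T\mathcal{G}_s=(\phi_s)_*T\mathcal{G}_{\alpha,\lambda}$, and then the infinitesimal deformation would be of the form $d_\nabla(\overline{V_0})$, hence exact; this contradiction proves the claim. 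As a preliminary step I would record the shape of the complex: letting $e_1,e_2\in\mathfrak{X}(\mathbb{T}_\varphi)$ be the vector fields with $\omega_i(e_j)=\delta_{ij}$ and identifying $N\mathcal{G}_{\alpha,\lambda}$ with $\mathrm{span}(e_1,e_2)$, a short computation from $d\omega_1=\omega_2\wedge\omega_1$ and $d\omega_2=0$ gives $\overline{[X,e_1]}=\overline{[X,e_2]}=0$ in $N\mathcal{G}_{\alpha,\lambda}$ for every $X\in\Gamma(T\mathcal{G}_{\alpha,\lambda})$, i.e. $\overline{e_1},\overline{e_2}$ are Bott-flat. Hence $d_\nabla$ is $d_{\mathcal{G}_{\alpha,\lambda}}\otimes\mathrm{Id}$ on $\Omega^\bullet(\mathcal{G}_{\alpha,\lambda})\otimes\mathbb{R}^2$, and a leafwise $1$-form $\mu\otimes\overline{e_1}$ is $d_\nabla$-exact exactly when $\mu=d_{\mathcal{G}_{\alpha,\lambda}}g$ for some globally defined $g\in C^\infty(\mathbb{T}_\varphi)$.

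Next I would build the deformation from a closed $1$-form on the fiber. Since $\dim H^1(L)\geq 2$ by Lemma~\ref{lem:dim} and $[\alpha]\neq 0$, pick a closed $\beta\in\Omega^1(L)$ with $[\beta]\notin\mathbb{R}[\alpha]$. Choose a smooth family of closed $1$-forms $(\delta_t)_{t\in\mathbb{R}}$ on $L$ with $\delta_0=\beta$ and $\varphi^*\delta_{t+1}=\delta_t$ for all $t$ — for instance by interpolating $\beta$ and $(\varphi^*)^{-1}\beta$ over $[0,1]$ using a cutoff function all of whose derivatives vanish at the endpoints, and extending by $\delta_{t+n}:=(\varphi^*)^{-n}\delta_t$. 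The resulting $\delta:=\delta_t$, regarded as a $1$-form on $L\times\mathbb{R}$ with no $dt$-component, descends to a $1$-form on $\mathbb{T}_\varphi$ whose restriction to every fiber is closed, so that $d\delta\in dt\wedge\Omega^1(\mathbb{T}_\varphi)$. Then for all small $s$ the forms $\omega_1+s\delta$ and $\omega_2$ are still independent (compactness), and $\ker(\omega_1+s\delta)\cap\ker\omega_2$ is involutive: expanding $d(\omega_1+s\delta)\wedge(\omega_1+s\delta)\wedge\omega_2$ using $d(\omega_1+s\delta)=\omega_2\wedge\omega_1+s\,d\delta$, each of the four terms contains the factor $dt$ twice (recall $\omega_2=-\lambda\,dt$, $\delta$ has no $dt$-component, and $d\delta\in dt\wedge\Omega^1(\mathbb{T}_\varphi)$), hence vanishes, while $d\omega_2=0$. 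This gives a smooth path of codimension-$2$ foliations $\mathcal{G}_s$ with $\mathcal{G}_0=\mathcal{G}_{\alpha,\lambda}$.

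Finally I would compute the infinitesimal deformation. With $N\mathcal{G}_{\alpha,\lambda}=\mathrm{span}(e_1,e_2)$, solving for $\eta_s\in\Gamma(T^*\mathcal{G}_{\alpha,\lambda}\otimes N\mathcal{G}_{\alpha,\lambda})$ with $\mathrm{Graph}(\eta_s)=T\mathcal{G}_s$ and differentiating at $s=0$ gives $\dt{\eta_0}=-(\iota^*\delta)\otimes\overline{e_1}$, where $\iota^*\delta$ is the restriction of $\delta$ to the leaves of $\mathcal{G}_{\alpha,\lambda}$. To see that this is not $d_\nabla$-exact, restrict to the single fiber $L_0=L\times\{0\}$: the leaves of $\mathcal{G}_{\alpha,\lambda}$ through points of $L_0$ lie in $L_0$, where $\mathcal{G}_{\alpha,\lambda}$ restricts to the foliation $\ker\alpha$, which has dense leaves by Step~1 in the proof of Prop.~\ref{prop:other-forms}; moreover $\iota^*\delta|_{L_0}=\iota^*\beta$. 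If $\iota^*\delta=d_{\mathcal{G}_{\alpha,\lambda}}g$ for a global $g\in C^\infty(\mathbb{T}_\varphi)$, then on $L_0$ we get $\iota^*\beta=d_{\ker\alpha}(g|_{L_0})$, so $\beta-d(g|_{L_0})=f\alpha$ with $f\in C^\infty(L)$; since $d(f\alpha)=0$ forces $df\wedge\alpha=0$, $f$ is constant along the dense leaves, hence constant, whence $[\beta]\in\mathbb{R}[\alpha]$, a contradiction. Therefore $\dt{\eta_0}$ is a cocycle (by Heitsch) that is not exact, so $\mathcal{G}_s$ is not generated by an isotopy and $\mathcal{G}_{\alpha,\lambda}$ is not rigid.

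The main obstacle is the construction of $\delta$: it must simultaneously descend to the mapping torus, make $(\omega_1+s\delta,\omega_2)$ genuinely (not merely infinitesimally) involutive, and carry a leafwise cohomology class that is globally non-trivial. The point that a $\varphi$-invariant or $\varphi^*$-eigen representative of a cohomology class lying outside $\mathbb{R}[\alpha]$ need not exist is precisely what forces the $t$-dependent interpolation used above; the only other delicate step, the non-exactness, reduces to the short argument just given and rests on the density of the leaves of $\mathcal{G}_{\alpha,\lambda}$ inside the fibers.
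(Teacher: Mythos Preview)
Your proof is correct and follows the same overall strategy as the paper: exhibit an explicit one-parameter family of foliations through $\mathcal{G}_{\alpha,\lambda}$ whose infinitesimal deformation is a non-exact class in $H^{1}(\mathcal{G}_{\alpha,\lambda};N\mathcal{G}_{\alpha,\lambda})$, detecting non-exactness by restricting to a single fiber $L$ and invoking the density of the leaves of $\ker\alpha$ there. The execution differs in two places. First, the paper perturbs $\omega_2$ rather than $\omega_1$: it sets $\omega_2(s)=-\lambda\,dt+s\rho(t)\beta$ with $\rho$ a bump function supported away from $t\in\{0,1\}$, so the infinitesimal deformation lands in the $\overline{\partial_t}$-direction, whereas yours lands in the $\overline{e_1}$-direction. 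Second, the paper's choice of a compactly supported $\rho(t)\beta$ avoids your interpolation entirely: since $\rho$ vanishes near the endpoints, $\rho(t)\beta$ descends to $\mathbb{T}_\varphi$ trivially, and no compatibility with $\varphi^{*}$ is needed. This is simpler than your construction of $(\delta_t)$, which you correctly identify as the main obstacle; on the other hand, your version has the virtue that the deforming form restricts to $\beta$ itself on the fiber $L_0$, so the non-exactness step does not require passing to a fiber where the bump function is non-zero. The non-exactness arguments are otherwise identical in spirit: both reduce to showing that $\iota^{*}_{\ker\alpha}\beta$ is not $d_{\ker\alpha}$-exact on $L$, which forces $[\beta]\in\mathbb{R}[\alpha]$ via the density of leaves, a contradiction.
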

	\begin{proof}
		We construct a smooth path of foliations $\mathcal{G}_s$ such that $\mathcal{G}_{0}=\mathcal{G}_{\alpha,\lambda}$ and the infinitesimal deformation coming from $\mathcal{G}_s$ defines a non-trivial class in $H^{1}(\mathcal{G}_{\alpha,\lambda},N\mathcal{G}_{\alpha,\lambda})$.
		
		By Lemma~\ref{lem:dim} we can take $[\beta]\in H^{1}(L)$ such that $[\beta]$ and $[\alpha]$ are linearly independent. Also fix a bump function $\rho(t)\in C^{\infty}[0,1]$ such that $\rho\equiv 0$ near $t=0$ and $t=1$. It follows that we get a well-defined one-form $\rho(t)\beta\in\Omega^{1}(\mathbb{T}_{\varphi})$. Now consider the path of foliations $\mathcal{G}_s$ defined by the kernel of the one-forms
		\[
		\begin{cases}
			\omega_1(s)=e^{-\lambda t}\alpha\\
			\omega_2(s)=-\lambda dt+s\rho(t)\beta	
		\end{cases}.
		\]
		Note that for every value of $s$, we have that $\omega_1(s)$ and $\omega_2(s)$ are linearly independent at every point of $\mathbb{T}_{\varphi}$. To see that $\omega_1(s)$ and $\omega_2(s)$ indeed define a foliation $\mathcal{G}_s$, we need to check that they generate a differential ideal. This is the case, because
		\[
		\begin{cases}
			d\omega_1(s)=-\lambda dt\wedge\omega_1(s)\\
			d\omega_2(s)=s\rho'(t)dt\wedge\beta=\lambda^{-1}s\rho'(t)\beta\wedge\omega_2(s)
		\end{cases},
		\]
		where $-\lambda dt$ and $\lambda^{-1}s\rho'(t)\beta$ are well-defined one-forms on $\mathbb{T}_{\varphi}$. If  $r:\Omega^{1}(\mathbb{T}_{\varphi})\rightarrow\Omega^{1}(\mathcal{G}_{\alpha,\lambda})$ denotes the restriction map, then we have
		\[
		T\mathcal{G}_{s}=\text{Graph}\left(s\lambda^{-1}r\left(\rho(t)\beta\right)\otimes\partial_t\right)=\left\{V+s\lambda^{-1}\rho(t)\beta(V)\partial_t:\ V\in T\mathcal{G}_{\alpha,\lambda}\right\},
		\]
		i.e. $T\mathcal{G}_{s}$ is defined by the element $s\lambda^{-1}r\left(\rho(t)\beta\right)\otimes\partial_{t}\in\Omega^{1}(\mathcal{G}_{\alpha,\lambda},N\mathcal{G}_{\alpha,\lambda})$. To conclude that the deformation $\mathcal{G}_{s}$ is not generated by an isotopy, we need to check that the following cohomology class is non-trivial:
		\begin{equation}\label{eq:class}
			\left[\lambda^{-1}r\left(\rho(t)\beta\right)\otimes\partial_{t}\right]\in H^{1}(\mathcal{G}_{\alpha,\lambda},N\mathcal{G}_{\alpha,\lambda}).
		\end{equation}
		
		To do this, note that $\partial_t$ defines a flat section of $(N\mathcal{G}_{\alpha,\lambda},\nabla)$ since its flow preserves $\mathcal{G}_{\alpha,\lambda}$. Using the Lie foliation structure of $\mathcal{G}_{\alpha,\lambda}$, we can find another flat section $Z\in\Gamma(N\mathcal{G}_{\alpha,\lambda})$ so that $\{\partial_t,Z\}$ is a flat frame of $(N\mathcal{G}_{\alpha,\lambda},\nabla)$. This is a consequence of the well-known fact that the frame of $N\mathcal{G}_{\alpha,\lambda}$ dual to the frame $\{e^{-\lambda t}\alpha,-\lambda dt\}$ of $N^{*}\mathcal{G}_{\alpha,\lambda}$ consists of flat sections \cite[Proof of Prop.~4.21]{Moerdijk}.
		 If the class \eqref{eq:class} were trivial, there would be $f,g\in C^{\infty}(\mathbb{T}_{\varphi})$ such that
		\[
		r\left(\rho(t)\beta\right)\otimes\partial_{t}=d_{\nabla}\left(f\partial_t+gZ\right)=d_{\mathcal{G}_{\alpha,\lambda}}f\otimes\partial_t+d_{\mathcal{G}_{\alpha,\lambda}}g\otimes Z,
		\]
		hence we would have that $r\left(\rho(t)\beta\right)=d_{\mathcal{G}_{\alpha,\lambda}}f$. Restricting to a fiber $L$ of $\mathbb{T}_{\varphi}\rightarrow S^{1}$, we get that the class $[\beta]\in H^{1}(L)$ lies in the kernel of the map
		\begin{equation}\label{eq:beta}
			H^{1}(L)\rightarrow H^{1}\left(\mathcal{G}_{\alpha,\lambda}|_{L}\right).
		\end{equation}
		Now recall that there is a short exact sequence of complexes
		\[
		0\longrightarrow\big(\Omega^{\bullet}_{\mathcal{G}_{\alpha,\lambda}|_{L}}(L),d\big)\longrightarrow\big(\Omega^{\bullet}(L),d\big)\longrightarrow\big(\Omega^{\bullet}(\mathcal{G}_{\alpha,\lambda}|_{L}),d_{\mathcal{G}_{\alpha,\lambda}|_{L}}\big)\longrightarrow 0,
		\]
		where $\Omega^{\bullet}_{\mathcal{G}_{\alpha,\lambda}|_{L}}(L)$ are the forms on $L$ that vanish when all arguments are tangent to $\mathcal{G}_{\alpha,\lambda}|_{L}$. The corresponding long exact cohomology sequence is
		\[
		\cdots\longrightarrow H^{1}_{\mathcal{G}_{\alpha,\lambda}|_{L}}(L)\longrightarrow H^{1}(L)\longrightarrow H^{1}\left(\mathcal{G}_{\alpha,\lambda}|_{L}\right)\longrightarrow \cdots
		\]
		Hence, $[\beta]$ lying in the kernel of the map \eqref{eq:beta} means that $[\beta]$ lies in the image of 
		\begin{equation}\label{eq:image}
			H^{1}_{\mathcal{G}_{\alpha,\lambda}|_{L}}(L)\longrightarrow H^{1}(L).
		\end{equation}
		However, we claim that $H^{1}_{\mathcal{G}_{\alpha,\lambda}|_{L}}(L)=\mathbb{R}\alpha$. Indeed, a one-form on $L$ that vanishes on $\mathcal{G}_{\alpha,\lambda}|_{L}$ must be of the form $h\alpha$ for some $h\in C^{\infty}(L)$. Such a form is closed if $dh\wedge\alpha=0$, meaning that $h$ is a basic function for the foliation $\mathcal{G}_{\alpha,\lambda}|_{L}$ defined by $\alpha$. But since $\mathcal{G}_{\alpha,\lambda}|_{L}$ has dense leaves (see Step 1 in the proof of Prop.~\ref{prop:other-forms}), this means that $h$ is constant. It follows that the image of the map \eqref{eq:image} is $\mathbb{R}[\alpha]$. So we get that $[\beta]\in\mathbb{R}[\alpha]$, which is impossible because $[\beta]$ and $[\alpha]$ are linearly independent. This confirms that the class \eqref{eq:class} is non-trivial.
	\end{proof}

	\subsection{Rigidity of Lie affine foliations in low dimension}
	We now restrict attention to Lie affine foliations $(\mathcal{F},\omega)$ on compact, connected, orientable manifolds $M$ of dimension $3$ or $4$. As shown in \cite{caron} and \cite{Matsumoto}, such foliated manifolds $(M,\mathcal{F})$ are foliated diffeomorphic to a foliated manifold $(\mathbb{T}_{\varphi},\mathcal{G}_{\alpha,\lambda})$ as in Def.~\ref{def:model}. Hence, the foliation $\mathcal{F}$ is never rigid when deformed merely as a foliation, because of Prop.~\ref{prop:not-rigid}. However, we will now show that the Lie affine foliation $(\mathcal{F},\omega)$ \emph{is} rigid when deformed as a Lie foliation.

	\subsubsection{Rigidity of Lie affine foliations on $3$-manifolds}
	
	Let $M$ be a compact, connected and orientable $3$-manifold. There is a normal form for foliations on $M$ with a Lie affine structure due to Caron \cite{caron}. We start by recalling this result.
	
	Let $A$ be a matrix in $SL_{2}(\mathbb{Z})$ with $\text{Tr}(A)>2$. Then $A$ has two real eigenvalues $\lambda,\lambda^{-1}\in\mathbb{R}$ that are positive and distinct. Pick eigenvectors $V,W$ associated to the eigenvalues $\lambda,\lambda^{-1}$ respectively. We will view $A$ as a diffeomorphism $\varphi_A$ of the torus $\mathbb{T}^{2}$ and $V,W$ as vector fields on $\mathbb{T}^{2}$. Now consider the mapping torus
	\begin{equation}\label{eq:mapping-A}
		\mathbb{T}_{A}:=\frac{\mathbb{T}^{2}\times\mathbb{R}}{(x,t)\sim(\varphi_A(x),t+1)}.
	\end{equation}
	The distributions $\text{Span}(V)$ and $\text{Span}(W)$ on $\mathbb{T}^{2}$ are invariant under $\varphi_A$, hence they induce $1$-dimensional foliations $\mathcal{F}_V$ and $\mathcal{F}_W$ on $\mathbb{T}_{A}$. Both are irrational flows tangent to the $\mathbb{T}^{2}$-fibers of $\mathbb{T}_{A}\rightarrow S^{1}$. In particular, the leaf closures of $\mathcal{F}_V$ and $\mathcal{F}_W$ are the fibers of $\mathbb{T}_{A}\rightarrow S^{1}$.
	
	Both $\mathcal{F}_V$ and $\mathcal{F}_W$ can be endowed with Lie $\mathfrak{aff}(1)$-foliation structures. Let us explain this in the case of $\mathcal{F}_V$. Denote by $V^{*},W^{*}\in\Omega^{1}(\mathbb{T}^{2})$ the dual frame of $V,W\in\mathfrak{X}(\mathbb{T}^{2})$ -- note that both $V^{*}$ and $W^{*}$ are closed. The one-forms $\lambda^{t}W^{*}$ and $\ln(\lambda)dt$ on $\mathbb{T}^{2}\times\mathbb{R}$ are invariant under the identification appearing in \eqref{eq:mapping-A}, hence they descend to one-forms on $\mathbb{T}_A$ which we will denote in the same way
	\begin{equation}\label{eq:omegas}
		\begin{cases}
			\omega_1:=\lambda^{t}W^{*}\\
			\omega_2:=\ln(\lambda)dt
		\end{cases}.
	\end{equation}
	Clearly, the foliation $\mathcal{F}_V$ is defined by $\omega_1$ and $\omega_2$, which satisfy the relations \eqref{eq:aff-fol}. Hence, we get a Lie $\mathfrak{aff}(1)$-foliation  $(\mathcal{F}_V,\omega)$. Note that $(\mathcal{F}_V,\omega)$ is of the type introduced in Def.~\ref{def:model} -- in the notation established there, we have $(\mathcal{F}_V,\omega)=(\mathcal{G}_{W^{*},-\ln(\lambda)},\omega)$.
	
	\begin{thm}\cite[Thm.~V.1]{caron}\label{thm:foliated-diffeo}
		Let $M$ be a compact, connected and orientable $3$-manifold with a Lie $\mathfrak{aff}(1)$-foliation $(\mathcal{F},\eta)$. Then there exist a matrix $A\in SL_{2}(\mathbb{Z})$ with $\text{Tr}(A)>2$ and a foliated diffeomorphism
		$(\mathbb{T}_A,\mathcal{F}_V)\overset{\sim}{\rightarrow}(M,\mathcal{F})$.
	\end{thm}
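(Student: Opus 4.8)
The plan is to reconstruct Caron's classification in three moves: first pin down the topology of $(M,\mathcal F)$ via the structure theory of Lie foliations, then read off from the defining equations that the monodromy is hyperbolic, and finally promote the resulting diffeomorphism to a foliated one. Throughout, write the Lie $\mathfrak{aff}(1)$-structure as $T\mathcal F=\ker\eta_1\cap\ker\eta_2$ with $d\eta_1-\eta_2\wedge\eta_1=0$, $d\eta_2=0$, $\eta_1,\eta_2$ independent everywhere, and let $h\colon\pi_1(M)\to\mathrm{Aff}^{+}(1)$ be the holonomy homomorphism of the foliation.

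First I would show $M\cong\mathbb T_{\varphi}$ is a torus bundle over $S^{1}$ with $\mathcal F$ tangent to the fibres. By Molino's structure theorem \cite{Molino} (together with Fedida's description of $\widetilde M$), the leaf closures of $\mathcal F$ are the fibres of a locally trivial fibration $\pi\colon M\to W$ onto a compact manifold, $\mathcal F$ is tangent to the fibres, and each fibre $F$ carries an induced Lie $\mathfrak g'$-foliation with dense leaves, where $\mathfrak g'$ is the Lie algebra of $\overline{h(\pi_1 M)}\subseteq\mathfrak{aff}(1)$; as $\mathcal F$ has codimension $2$ this gives $\dim F=1+\dim\mathfrak g'$ and $\dim W=2-\dim\mathfrak g'$. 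The case $\dim\mathfrak g'=0$ is impossible because $\mathcal F$ would then be a fibration $M\to W^{2}$ with $W$ a complete $\mathrm{Aff}^{+}(1)$-manifold, i.e.\ a compact quotient of $\mathrm{Aff}^{+}(1)$ by a discrete group, which cannot exist since $\mathrm{Aff}^{+}(1)$ is non-unimodular. The case $\dim\mathfrak g'=2$ is ruled out by analysing which $\mathrm{PD}(3)$-groups can sit densely in the solvable group $\mathrm{Aff}^{+}(1)$ (Fedida's theorem makes $h$ injective in this case, so $\pi_1(M)$ embeds in $\mathrm{Aff}^{+}(1)$): any such group turns out to be a torus-bundle group, which lands one back in the case $\dim\mathfrak g'=1$. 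So $\dim\mathfrak g'=1$, $W=S^{1}$, and $F$ is a compact orientable surface carrying a closed nowhere-vanishing one-form, hence $\chi(F)=0$ and $F=\mathbb T^{2}$; thus $M\cong\mathbb T_{\varphi}$ for some $\varphi\in\mathrm{Diff}(\mathbb T^{2})$. I expect this step to be the main obstacle, since converting the abstract structure theorem into this concrete statement — and in particular excluding the dense-leaf case cleanly — takes real work.

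Next I would show the monodromy is hyperbolic with $\mathrm{Tr}>2$. On the fibres $\eta_2$ vanishes, so it is basic; being closed and non-exact, $\eta_2=c\,dt+df$ with $c\neq 0$, and after the gauge change $\eta_1\mapsto e^{-f}\eta_1$ of Prop.~\ref{prop:prelim}~$i)$ we may assume $\eta_2=c\,dt$. On the cover $\mathbb T^{2}\times\mathbb R$ one has $d(e^{-ct}\eta_1)=0$, so $\mu:=e^{-ct}\eta_1$ is a closed nowhere-vanishing one-form there; invariance of $\eta_1$ under $(x,t)\sim(\varphi(x),t+1)$ yields $\varphi^{*}\big(\mu|_{\mathbb T^{2}\times\{1\}}\big)=e^{-c}\,\mu|_{\mathbb T^{2}\times\{0\}}$, while closedness of $\mu$ on the product gives $[\mu|_{\mathbb T^{2}\times\{1\}}]=[\mu|_{\mathbb T^{2}\times\{0\}}]$ in $H^{1}(\mathbb T^{2})$. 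Hence the nonzero class $[\eta_1|_{L}]$ is an eigenvector of $\varphi^{*}$ on $H^{1}(\mathbb T^{2})$ with eigenvalue $e^{-c}>0$, $e^{-c}\neq 1$. Orientability of $M$ forces $\varphi^{*}$ to be a matrix $A\in SL_{2}(\mathbb Z)$, whose eigenvalues are then $e^{-c}$ and $e^{c}$, so $\mathrm{Tr}(A)=2\cosh c>2$, and $[\eta_1|_{L}]$ spans one of the two irrational eigenlines of $A$.

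Finally I would upgrade the diffeomorphism type to a foliated diffeomorphism. Since $\mathrm{MCG}(\mathbb T^{2})=GL_{2}(\mathbb Z)$ acts faithfully through $\varphi\mapsto\varphi^{*}$, $\varphi$ is isotopic to the linear Anosov diffeomorphism $\varphi_A$, giving a diffeomorphism $\mathbb T_{\varphi}\cong\mathbb T_{A}$. To make it foliated, note that on each $\mathbb T^{2}$-fibre $\mathcal F$ is $\ker(\eta_1|_{L})$ with $[\eta_1|_{L}]$ the fixed irrational eigendirection of $A$; a Moser-type deformation interpolating between $\eta_1|_{L}$ and a constant-coefficient representative of the same cohomology class — which remains nowhere-vanishing precisely because the class is totally irrational — provides a smooth conjugacy to the linear eigendirection foliation $\mathcal F_V$ on that fibre. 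Carrying this out compatibly in the base variable $t\in S^{1}$ and composing with the isotopy yields the desired foliated diffeomorphism $(\mathbb T_A,\mathcal F_V)\overset{\sim}{\to}(M,\mathcal F)$; alternatively, one can first normalise $\eta_1,\eta_2$ on the nose in the style of Prop.~\ref{prop:other-forms} so that $\mathcal F$ becomes literally a model foliation $\mathcal G_{\alpha,\lambda}$, and then match that model with $\mathcal F_V$ directly. This fibrewise-to-linear identification, carried out smoothly over $S^{1}$, is the other delicate point of the argument.
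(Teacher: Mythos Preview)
The paper does not give its own proof of this theorem: it is quoted as \cite[Thm.~V.1]{caron} and used as a black box in the proof of Cor.~\ref{cor:rigid3}. There is therefore no argument in the paper to compare your attempt against.

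On its own merits, your outline has the right architecture, and Steps~2 and~3 are essentially sound once Step~1 is secured. The genuine gap is in Step~1, specifically the exclusion of the case $\dim\mathfrak g'=2$. Saying that a $\mathrm{PD}(3)$-group embedded densely in $\mathrm{Aff}^+(1)$ ``turns out to be a torus-bundle group, which lands one back in the case $\dim\mathfrak g'=1$'' is both unproved and a non sequitur: even if $\pi_1(M)$ were abstractly a torus-bundle group, that would not by itself force the given Lie foliation to have $\dim\mathfrak g'=1$. What is actually needed is a direct obstruction in the dense case --- e.g.\ via Fedida one has $\widetilde M\cong\mathbb R\times\mathrm{Aff}^+(1)$ with $\pi_1(M)$ acting cocompactly and projecting densely to $\mathrm{Aff}^+(1)$, and one must extract a contradiction from the non-unimodularity of $\mathrm{Aff}^+(1)$ (no lattice) together with the structure of that action. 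This is precisely the substantive part of Caron's argument, and your sketch does not supply it. A smaller point: at the start of Step~2 the clause ``on the fibres $\eta_2$ vanishes'' is not automatic from Molino's theorem; it requires the density argument of Steps~1--2 in the proof of Prop.~\ref{prop:other-forms} (restriction to a fibre is a constant multiple of a fixed closed form, then the monodromy relation forces the constant to be zero). You do have the needed density of $\mathcal F|_F$ from Step~1, but you should invoke it explicitly rather than assert the conclusion.
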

	
	\begin{cor}\label{cor:rigid3}
		Let $M$ be a compact, connected and orientable $3$-manifold.  Any Lie $\mathfrak{aff}(1)$-foliation $(\mathcal{F},\eta)$ on $M$ is rigid, when deformed as a Lie foliation.
	\end{cor}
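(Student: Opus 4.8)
The plan is to reduce to the model case via Caron's normal form (Theorem~\ref{thm:foliated-diffeo}) and then apply the machinery of \S\ref{subsub:independence}. First I would apply Theorem~\ref{thm:foliated-diffeo} to obtain a matrix $A\in SL_{2}(\mathbb{Z})$ with $\mathrm{Tr}(A)>2$ and a foliated diffeomorphism $\phi\colon(\mathbb{T}_A,\mathcal{F}_V)\to(M,\mathcal{F})$. Recall that $(\mathcal{F}_V,\omega)=(\mathcal{G}_{W^{*},-\ln\lambda},\omega)$ is a model Lie affine foliation in the sense of Def.~\ref{def:model}, with $\omega_1,\omega_2$ as in \eqref{eq:omegas} and with $\lambda,\lambda^{-1}$ the two eigenvalues of $A$ (distinct and positive since $\mathrm{Tr}(A)>2$). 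Writing $\eta=(\eta_1,\eta_2)$ for the Maurer-Cartan form defining $\mathcal{F}$, I set $\zeta_i:=\phi^{*}\eta_i\in\Omega^{1}(\mathbb{T}_A)$; since $\phi$ is a foliated diffeomorphism and pullback preserves \eqref{eq:aff-fol}, the pair $(\zeta_1,\zeta_2)$ consists of independent one-forms with $d\zeta_1-\zeta_2\wedge\zeta_1=0$, $d\zeta_2=0$ and $T\mathcal{G}_{W^{*},-\ln\lambda}=\ker\zeta_1\cap\ker\zeta_2$. Thus $(\mathbb{T}_A,\mathcal{G}_{W^{*},-\ln\lambda},\zeta)$ is a Lie affine foliation defined by the same foliation as the model $(\mathcal{G}_{W^{*},-\ln\lambda},\omega)$, but a priori by a different Maurer-Cartan form.

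Next I would verify that the model $(\mathcal{G}_{W^{*},-\ln\lambda},\omega)$ satisfies the hypothesis of Cor.~\ref{cor:rigidity-model}. Here the relevant operator is $[\varphi_A^{*}-\lambda^{-1}\mathrm{Id}]\colon H^{1}(\mathbb{T}^{2})\to H^{1}(\mathbb{T}^{2})$, because the model parameter equals $-\ln\lambda$, so that $e^{-\ln\lambda}=\lambda^{-1}$, and $\varphi_A^{*}W^{*}=\lambda^{-1}W^{*}$ (as $W$ is the $\lambda^{-1}$-eigenvector of $A$). The induced map $\varphi_A^{*}$ on $H^{1}(\mathbb{T}^{2})\cong\mathbb{R}^{2}$ has the same characteristic polynomial as $A$, hence eigenvalues $\lambda$ and $\lambda^{-1}$; since these are distinct, $\varphi_A^{*}$ is diagonalizable and its $\lambda^{-1}$-eigenspace is the line $\mathbb{R}[W^{*}]$. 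Therefore $[\varphi_A^{*}-\lambda^{-1}\mathrm{Id}]$ vanishes on $\mathbb{R}[W^{*}]$ and acts by multiplication by $\lambda-\lambda^{-1}\neq 0$ on the complementary eigenline, and the same holds for its square; consequently $\ker\big([\varphi_A^{*}-\lambda^{-1}\mathrm{Id}]^{2}\big)=\mathbb{R}[W^{*}]$. By Cor.~\ref{cor:rigidity-model} (see in particular its proof), the model $(\mathcal{G}_{W^{*},-\ln\lambda},\omega)$ therefore satisfies the cohomological requirements of Cor.~\ref{cor:req-rigidity}.

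Now I would invoke Cor.~\ref{cor:equiv-rigidity}: since the pair $\omega_1,\omega_2$ defining $\mathcal{G}_{W^{*},-\ln\lambda}$ satisfies the requirements of Cor.~\ref{cor:req-rigidity}, so does the pair $\zeta_1,\zeta_2$ defining the same foliation. Applying Cor.~\ref{cor:req-rigidity} on the compact connected manifold $\mathbb{T}_A$, we conclude that the Lie foliation $(\mathcal{G}_{W^{*},-\ln\lambda},\zeta)$ is rigid. Finally I would transport rigidity along $\phi$: given a smooth deformation $(\mathcal{F}_t,\eta_t)$ of $(\mathcal{F},\eta)$ on $M$, the pullback $\big(\phi^{-1}(\mathcal{F}_t),\phi^{*}\eta_t\big)$ is a deformation of $(\mathcal{G}_{W^{*},-\ln\lambda},\zeta)$, which by rigidity is trivialized by a family $\psi_t\in\mathrm{Diff}(\mathbb{T}_A)$ and Lie algebra isomorphisms $\nu_t$ with $\psi_t^{*}\phi^{*}\eta_t=\nu_t\circ\zeta$; then $\phi\circ\psi_t\circ\phi^{-1}$ together with $\nu_t$ trivializes $(\mathcal{F}_t,\eta_t)$, since $(\phi\circ\psi_t\circ\phi^{-1})^{*}\eta_t=\nu_t\circ\eta$. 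Hence $(\mathcal{F},\eta)$ is rigid.

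The conceptual obstacle is exactly the one anticipated in \S\ref{subsub:independence}: Caron's diffeomorphism $\phi$ need not send the model forms $\omega_1,\omega_2$ to $\eta_1,\eta_2$, so rigidity of the specific model $(\mathcal{G}_{W^{*},-\ln\lambda},\omega)$ does not by itself yield rigidity of $(\mathcal{F},\eta)$. This gap is bridged precisely by Cor.~\ref{cor:equiv-rigidity} (resting on Prop.~\ref{prop:other-forms}), which shows the rigidity conditions depend only on the underlying foliation; the eigenvalue computation in the second paragraph and the transfer of rigidity along a diffeomorphism in the third are both routine.
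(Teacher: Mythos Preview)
Your proposal is correct and follows essentially the same route as the paper's proof: apply Caron's normal form, pull back $\eta$ along $\phi$ to get $\zeta$ on $\mathbb{T}_A$, use Cor.~\ref{cor:equiv-rigidity} to reduce the rigidity requirements for $\zeta$ to those for the model form $\omega$, and then verify the hypothesis of Cor.~\ref{cor:rigidity-model} by the eigenvalue computation showing $\ker\big([\varphi_A^{*}-\lambda^{-1}\mathrm{Id}]^{2}\big)=\mathbb{R}[W^{*}]$. Your argument via diagonalizability of $\varphi_A^{*}$ (distinct eigenvalues $\lambda,\lambda^{-1}$) is just a rephrasing of the paper's explicit coordinate computation in the basis $[V^{*}],[W^{*}]$, and your final paragraph spells out the transport of rigidity along $\phi$ that the paper leaves implicit.
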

	\begin{proof}
		By assumption, $\mathcal{F}$ is defined by one-forms $\eta_1,\eta_2\in\Omega^{1}(M)$ satisfying
		\begin{equation*}
			\begin{cases}
				d\eta_1-\eta_2\wedge\eta_1=0\\
				d\eta_2=0
			\end{cases}.
		\end{equation*}
		By Thm.~\ref{thm:foliated-diffeo}, there is a matrix $A\in SL_{2}(\mathbb{Z})$ with $\text{Tr}(A)>2$ and a foliated diffeomorphism
		\[
		\phi:(\mathbb{T}_A,\mathcal{F}_V)\overset{\sim}{\rightarrow}(M,\mathcal{F}).
		\]
		We denote $\zeta_1:=\phi^{*}\eta_1$ and $\zeta_2:=\phi^{*}\eta_2$, which are one-forms on $\mathbb{T}_A$ that define $\mathcal{F}_V$. They also satisfy the relations
		\begin{equation*}
			\begin{cases}
				d\zeta_1-\zeta_2\wedge\zeta_1=0\\
				d\zeta_2=0
			\end{cases}.
		\end{equation*}
		To show that $(\mathcal{F},\eta)$ is rigid, it suffices to establish rigidity of $(\mathcal{F}_V,\zeta)$. To do so, we have to check that $(\mathcal{F}_V,\zeta)$ meets the requirements of Cor.~\ref{cor:req-rigidity}. By Cor.~\ref{cor:equiv-rigidity}, we can check instead that $(\mathcal{F}_V,\omega)$ meets these requirements. 
		This in turn can be done by checking that the sufficient conditions in Cor.~\ref{cor:rigidity-model} are satisfied. So we need to verify the following:
		\begin{enumerate}
			\item The eigenvalue $\lambda^{-1}$ of $[\varphi_{A}^{*}]:H^{1}(\mathbb{T}^{2})\rightarrow H^{1}(\mathbb{T}^{2})$ has algebraic multiplicity $1$,
			\item The map $\bullet\wedge[W^{*}]:\ker\left([\varphi^{*}_{A}-\mathrm{Id}]:H^{1}(\mathbb{T}^{2})\rightarrow H^{1}(\mathbb{T}^{2})\right)\rightarrow H^{2}(\mathbb{T}^{2})$ is injective.
		\end{enumerate}
		Both conditions are met, since $[\varphi_{A}^{*}]:H^{1}(\mathbb{T}^{2})\rightarrow H^{1}(\mathbb{T}^{2})$ has distinct eigenvalues $\lambda,\lambda^{-1}$.
	\end{proof}
	
	\begin{ex}\label{ex:dim3}
	Consider the torus $\mathbb{T}^{2}$ with the diffeomorphism $\varphi_{A}:\mathbb{T}^{2}\rightarrow\mathbb{T}^{2}$ induced by 
	\[
	A=\begin{pmatrix}
	2 & 1 \\ 1 & 1
	\end{pmatrix}\in SL_{2}(\mathbb{Z}).
	\]
	We remark that $\varphi_{A}$ is also known as Arnold's cat map. The eigenvalues of $A$ are given by
	\[
	\lambda_{1}=\frac{3+\sqrt{5}}{2},\hspace{0.5cm}\lambda_{2}=\frac{3-\sqrt{5}}{2}.
	\]
	Consider the closed, nowhere vanishing one-form $\alpha\in\Omega^{1}(\mathbb{T}^{2})$ defined by
	\[
	\alpha:=\left(\frac{1+\sqrt{5}}{2}\right)d\theta_1+d\theta_2.
	\]
	It is straightforward to check that $\varphi_{A}^{*}\alpha=\lambda_1\alpha$. Therefore, the mapping torus $\mathbb{T}_{A}$ supports a model Lie affine foliation $(\mathcal{G}_{\alpha,\ln(\lambda_1)},\omega)$ defined by the one-forms
	\[
	\begin{cases}
			\omega_1:=\lambda_{1}^{-t}\alpha\\
			\omega_2=-\ln(\lambda_1)dt
	\end{cases}.
	\]
	The Lie $\mathfrak{aff}(1)$-foliation $(\mathcal{G}_{\alpha,\ln(\lambda_1)},\omega)$ is rigid. Indeed, the conditions of Cor.~\ref{cor:rigidity-model} are satisfied:
	\begin{enumerate}
		\item The eigenvalue $\lambda_1$ of $[\varphi_{A}^{*}]:H^{1}(\mathbb{T}^{2})\rightarrow H^{1}(\mathbb{T}^{2})$ has algebraic multiplicity $1$,
		\item The map $\bullet\wedge[\alpha]:\ker\left([\varphi^{*}_{A}-\mathrm{Id}]:H^{1}(\mathbb{T}^{2})\rightarrow H^{1}(\mathbb{T}^{2})\right)\rightarrow H^{2}(\mathbb{T}^{2})$ is injective.
	\end{enumerate}
	Both statements hold because the matrix $A$ has distinct eigenvalues $\lambda_1,\lambda_2$ different from $1$.
	\end{ex}

	\color{black}

	Although $(\mathcal{F},\eta)$ is rigid when deformed as a Lie foliation, we already know from Prop.~\ref{prop:not-rigid} that the foliation $\mathcal{F}$ is not rigid when deformed just as a foliation. In the proof of Prop.~\ref{prop:not-rigid} we constructed an explicit non-trivial deformation of $\mathcal{F}_{V}$ on $\mathbb{T}_{A}$, which under the foliated diffeomorphism $(\mathbb{T}_{A},\mathcal{F}_{V})\overset{\sim}{\rightarrow}(M,\mathcal{F})$ corresponds with a non-trivial deformation of $\mathcal{F}$. In fact, when $M$ is $3$-dimensional we can show that $\mathcal{F}$ has plenty of non-trivial deformations since the relevant cohomology group $H^1(\mathcal{F},N\mathcal{F})$ can be computed. We now spell this out.

	\begin{remark}
		Let $(\mathcal{F},\eta)$ be a Lie affine foliation on a compact, connected, orientable $3$-manifold $M$.
		As we explained in \S\ref{subsub:non-rigid}, the infinitesimal deformation induced by a smooth deformation $\mathcal{F}_t$ of $\mathcal{F}$ is a one-cocycle in $\left(\Omega^{\bullet}(\mathcal{F},N\mathcal{F}),d_{\nabla}\right)$. Since $\mathcal{F}$ is one-dimensional, every one-cocycle $\eta\in\Omega^{1}(\mathcal{F},N\mathcal{F})$ arises in such a way. Indeed, since involutivity is automatic for distributions of rank $1$, we have a path of foliations $\mathcal{F}_t$ given by $T\mathcal{F}_t=\text{Graph}(t\eta)$. Its corresponding infinitesimal deformation is exactly $\eta$. Hence, every cocycle defining a non-zero class in $H^{1}(\mathcal{F},N\mathcal{F})$ gives rise to a non-trivial deformation of $\mathcal{F}$.

		
		We now proceed by computing $H^{1}(\mathcal{F},N\mathcal{F})$. We can assume that $M=\mathbb{T}_A$ and $\mathcal{F}=\mathcal{F}_V$ by Thm.~\ref{thm:foliated-diffeo}. To compute the cohomology group $H^{1}(\mathcal{F}_V,N\mathcal{F}_V)$, note that the normal bundle $N\mathcal{F}_V$ is trivial as a $T\mathcal{F}_V$-representation. Indeed, the frame $\overline{\partial_t},\overline{\lambda^{-t}W}$ of $N\mathcal{F}_V$ consists of sections that are flat with respect to the Bott connection \eqref{eq:bott}. It follows that
		\[
		H^{1}(\mathcal{F}_V,N\mathcal{F}_V)\cong H^{1}(\mathcal{F}_V)\otimes\mathbb{R}^{2}.
		\]
		The foliated cohomology group $H^{1}(\mathcal{F}_V)$ can be computed using results from \cite[\S 2]{diophantine}. There one considers manifolds $M$ with certain one-dimensional foliations generated by a nowhere vanishing vector field $X\in\mathfrak{X}(M)$. It is assumed that the closures of the orbits are tori $\mathbb{T}^{n}$, which are the fibers of a locally trivial fibration 
		\[
		\mathbb{T}^{n}\hookrightarrow M\rightarrow W.
		\]
		Moreover, on each fiber $\mathbb{T}^{n}$, the vector field $X$ is assumed to satisfy a Diophantine condition \cite[Def.~2.2]{diophantine}. The foliated manifold $(\mathbb{T}_A,\mathcal{F}_V)$ satisfies all of these requirements, see \cite[\S 5]{diophantine}. Hence, we can apply \cite[Cor.~2.5]{diophantine}, which tells us that
		$
		H^{1}(\mathcal{F}_V)\cong C^{\infty}(S^1).
	    $
		It follows that
		\[
		H^{1}(\mathcal{F}_V,N\mathcal{F}_V)\cong C^{\infty}(S^1)\otimes\mathbb{R}^{2}.
		\]
		So we obtain an infinite dimensional space. In conclusion, when $\mathcal{F}$ is deformed merely as a foliation, it has many deformations $\mathcal{F}_t$ that are not generated by an isotopy of $M$. 
	\end{remark}
	
	\subsubsection{Rigidity of Lie affine foliations on $4$-manifolds}
	Let $M$ be a compact, connected and orientable $4$-manifold. There is a normal form for foliations on $M$ with a Lie affine structure due to Matsumoto-Tsuchiya \cite{Matsumoto}. We start by recalling this result.
	
	Let $G$ be a simply connected $3$-dimensional nilpotent Lie group, i.e. either $\mathbb{R}^{3}$ or the Heisenberg group $H_3$. Let $\Gamma\subset G$ be a uniform lattice, so $N:=G/\Gamma$ is a compact manifold. Let $\varphi$ be an automorphism of $G$ which keeps the lattice $\Gamma$ invariant -- it induces a diffeomorphism of $N$ which we still denote by $\varphi$. Let $\alpha\in\Omega^{1}(N)$ be a closed non-zero left invariant one-form such that $\varphi^{*}\alpha=e^{\lambda}\alpha$ for some $\lambda\neq 0$. The mapping torus $\mathbb{T}_{\varphi}$ then carries a model Lie affine foliation $(\mathcal{G}_{\alpha,\lambda},\omega)$ defined by
	\begin{equation*}
		\begin{cases}
			\omega_1:=e^{-\lambda t}\alpha\\
			\omega_2=-\lambda dt
		\end{cases}.
	\end{equation*}
	The following possibilities can occur:
	\begin{enumerate}
		\item Either $\varphi$ is an Anosov automorphism of $N=\mathbb{T}^{3}$.
		\item Or $\varphi$ is the lift of an Anosov diffeomorphism under an $S^{1}$-bundle map $p:N\rightarrow\mathbb{T}^{2}$.
	\end{enumerate}
	
	\begin{thm}\cite[Thm.~2.6]{Matsumoto}\label{thm:matsumoto}
		Let $M$ be a compact, connected and orientable $4$-manifold with a Lie $\mathfrak{aff}(1)$-foliation $(\mathcal{F},\eta)$. Then there is a foliated diffeomorphism between $(M,\mathcal{F})$ and a model foliation $(\mathbb{T}_{\varphi},\mathcal{G}_{\alpha,\lambda})$ of the type just described.
	\end{thm}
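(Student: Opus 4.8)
This is \cite[Thm.~2.6]{Matsumoto}, and the plan below retraces the argument of Matsumoto and Tsuchiya, whose engine is Molino's structure theory for Lie foliations on compact manifolds \cite{Molino}. The idea is to split $(M,\mathcal F)$ into a circle base and a $3$-dimensional fibre, and then to recognize the fibre.

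\emph{Step 1: Molino reduction.} I would first feed the codimension-$2$ Lie $\mathfrak{aff}(1)$-foliation $(\mathcal F,\eta)$ into Fedida's theorem and Molino's structure theorem: the leaf closures of $\mathcal F$ are the fibres of a locally trivial fibration $\pi\colon M\to W$ onto a compact connected manifold $W$, the induced foliation on each fibre $\overline L$ is a minimal Lie $\overline{\mathfrak h}$-foliation, and the relevant data is recorded by the closed holonomy subgroup $\overline\Gamma\subseteq\mathrm{Aff}^+(1)$, with $\overline{\mathfrak h}=\mathrm{Lie}(\overline\Gamma)$ and $W\cong\overline\Gamma\backslash\mathrm{Aff}^+(1)$.

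\emph{Step 2: the base is $S^1$.} Next I would go through the closed subgroups $\overline\Gamma$ of $\mathrm{Aff}^+(1)$ for which $W$ can be compact. The decisive fact is that $\mathrm{Aff}^+(1)$ is not unimodular and therefore has no lattice: this excludes $\overline\Gamma$ discrete (leaves closed, $W$ a compact quotient of $\mathrm{Aff}^+(1)$). A dilation-conjugate one-parameter subgroup equals its own normalizer, so it admits no cocompact quotient, which excludes $\overline{\mathfrak h}$ a non-normal line; and the minimal case $\overline\Gamma=\mathrm{Aff}^+(1)$ is ruled out by the solvable structure of $\mathrm{Aff}^+(1)$ (there is no minimal Lie $\mathfrak{aff}(1)$-foliation on a compact manifold). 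What remains is that the identity component of $\overline\Gamma$ is the translation subgroup and $\overline\Gamma/\overline\Gamma_0$ is infinite cyclic; thus $\overline{\mathfrak h}$ is the nilradical, $W$ is a compact connected $1$-manifold, i.e. $W\cong S^1$, and $M\cong\mathbb T_\varphi$ is a mapping torus with compact connected $3$-manifold fibre $N:=\overline L$. Restricting the defining forms to $N$, the form $\eta_2$ is basic for $\pi$ (its transverse structure on the fibres is valued in the nilradical), while $\alpha:=\eta_1|_N$ is closed, nowhere vanishing and has dense period group, with $\ker\alpha=\mathcal F|_N$; the equation $d\eta_1=\eta_2\wedge\eta_1$ then forces the monodromy $\varphi$ to satisfy $\varphi^*\alpha=e^{\lambda}\alpha$, and $\lambda\neq 0$ since $[\eta_2]\neq 0$ (because $\eta_2$ is nowhere zero on the compact $M$). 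After a gauge transformation as in Prop.~\ref{prop:prelim}~i) this identifies $(\mathcal F,\eta)$ with a model Lie affine foliation $(\mathcal G_{\alpha,\lambda},\omega)$, $\omega$ as in \eqref{eq:model-forms}.

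\emph{Step 3: recognizing the fibre — the main obstacle.} The hard part is to prove that $N$, $\alpha$ and $\varphi$ are of the asserted nilpotent type. The relation $\varphi^*[\alpha]=e^{\lambda}[\alpha]$ makes $e^{\lambda}$ a root of the integral characteristic polynomial of $\varphi^*$ on $H^1(N;\mathbb Z)$, hence an algebraic unit $\neq 1$; this is the input that produces Anosov-type hyperbolicity rather than a rigid rotation. I would then study the $\varphi$-invariant flag $0\subset\ker\alpha\subset TN$: since $\mathcal F|_N$ is itself a minimal Lie $\mathbb R$-foliation, applying the structure theory once more (or Tischler's argument to a rational perturbation of $\alpha$) fibres $N$ over $S^1$ with surface fibre, and $\varphi$-equivariance together with the homothety pins the surface down to $\mathbb T^2$ and the gluing map to an Anosov one; the two ways this can assemble are $N=\mathbb T^3$ with $\varphi$ an Anosov automorphism, or $N$ a circle bundle over $\mathbb T^2$ (a Heisenberg nilmanifold) with $\varphi$ covering an Anosov diffeomorphism of $\mathbb T^2$. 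This classificational step — the one-dimension-up analogue of Caron's $3$-manifold argument \cite{caron} recalled before Theorem~\ref{thm:foliated-diffeo} — is where essentially all the work sits and is carried out in detail in \cite[\S2]{Matsumoto}; granting it, $\mathcal F$ on $M\cong\mathbb T_\varphi$ is by construction $\mathcal G_{\alpha,\lambda}$ and the identification is foliated, which is the claim.
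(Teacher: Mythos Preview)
The paper does not prove this theorem: it is stated and attributed verbatim to \cite[Thm.~2.6]{Matsumoto}, with no argument given. So there is no ``paper's own proof'' to compare against; the paper simply imports the result as a black box and uses it in the proof of Cor.~\ref{cor:rigid4}. You acknowledge this at the outset and then sketch what you describe as the Matsumoto--Tsuchiya argument, which is more than the paper itself does.

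As to the sketch itself: Steps~1 and~2 (Molino/Fedida structure theory, elimination of the closed subgroups of $\mathrm{Aff}^+(1)$ to force $W\cong S^1$ and hence $M\cong\mathbb T_\varphi$) are the standard route and are consistent with what the paper uses elsewhere (e.g.\ Step~1 of Prop.~\ref{prop:other-forms} already exploits that the restricted foliation on the fibre has dense leaves). Step~3 is, as you say, where the work lies; your outline (Tischler perturbation to fibre $N$ over $S^1$, then pin down the surface fibre as $\mathbb T^2$ via the homothety $\varphi^*\alpha=e^{\lambda}\alpha$) is a reasonable heuristic, but it is not a proof: you would still need to show that the resulting fibration of $N$ can be made $\varphi$-equivariant and that the gluing map is actually Anosov, which is the substantive content of \cite[\S2]{Matsumoto}. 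Since the present paper does not attempt any of this, your proposal already goes well beyond it.
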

	
	\begin{cor}\label{cor:rigid4}
		Let $M$ be a compact, connected and orientable $4$-manifold.  Any Lie $\mathfrak{aff}(1)$-foliation $(\mathcal{F},\eta)$ on $M$ is rigid, when deformed as a Lie foliation.
	\end{cor}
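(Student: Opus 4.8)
The plan is to mirror the proof of Cor.~\ref{cor:rigid3}, replacing Caron's normal form by the Matsumoto--Tsuchiya normal form (Thm.~\ref{thm:matsumoto}), and reducing everything to the eigenvalue hypothesis of Cor.~\ref{cor:rigidity-model}. First I would set up the reduction. Since $\mathcal{F}$ is a Lie $\mathfrak{aff}(1)$-foliation, it is defined by one-forms $\eta_1,\eta_2\in\Omega^{1}(M)$ satisfying \eqref{eq:aff-fol}. By Thm.~\ref{thm:matsumoto} there is a foliated diffeomorphism $\phi:(\mathbb{T}_{\varphi},\mathcal{G}_{\alpha,\lambda})\to (M,\mathcal{F})$ for a model Lie affine foliation of the type described there, where $N=G/\Gamma$ is the fiber of $\mathbb{T}_{\varphi}\to S^1$ and $\alpha\in\Omega^1(N)$ is a closed, nowhere-vanishing, left-invariant one-form with $\varphi^{*}\alpha=e^{\lambda}\alpha$ for some $\lambda\neq 0$. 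Setting $\zeta_i:=\phi^{*}\eta_i$, the forms $\zeta_1,\zeta_2$ define $\mathcal{G}_{\alpha,\lambda}$ on $\mathbb{T}_{\varphi}$ and satisfy \eqref{eq:aff-fol}, so it suffices to prove rigidity of $(\mathcal{G}_{\alpha,\lambda},\zeta)$. By Cor.~\ref{cor:equiv-rigidity} this amounts to checking that the model $(\mathcal{G}_{\alpha,\lambda},\omega)$ meets the requirements of Cor.~\ref{cor:req-rigidity}, and by Cor.~\ref{cor:rigidity-model} it is enough to establish
\[
\ker\!\left([\varphi^{*}-e^{\lambda}\mathrm{Id}]^{2}:H^{1}(N)\to H^{1}(N)\right)=\mathbb{R}[\alpha].
\]
Since $\alpha$ is nowhere-vanishing it is not exact, so $[\alpha]\neq 0$ in $H^1(N)$ and $\varphi^{*}[\alpha]=e^{\lambda}[\alpha]$; hence it suffices to show that $e^{\lambda}$ is an \emph{algebraically simple} eigenvalue of $\varphi^{*}$ on $H^1(N)$, since then its generalized eigenspace is one-dimensional and coincides with $\ker[\varphi^{*}-e^{\lambda}\mathrm{Id}]=\mathbb{R}[\alpha]$.

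The heart of the argument, and the main obstacle, is to verify this simplicity in each of the two cases of the normal form. In case $(1)$, where $N=\mathbb{T}^3$ and $\varphi$ is an Anosov automorphism, $\varphi^{*}$ acts on $H^1(\mathbb{T}^3)\cong\mathbb{R}^3$ through an integer matrix whose characteristic polynomial is monic of degree $3$ with constant term $\pm\det\varphi=\pm 1$. If it were reducible over $\mathbb{Q}$ it would have an integer root dividing $1$, i.e.\ $\pm 1$, which is impossible because the Anosov condition forbids eigenvalues on the unit circle. So the polynomial is irreducible over $\mathbb{Q}$, hence separable, hence has distinct roots; every eigenvalue of $\varphi^{*}$, in particular $e^{\lambda}$, is therefore simple.

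In case $(2)$, where $\varphi$ covers an Anosov diffeomorphism $\psi$ of $\mathbb{T}^2$ along an $S^1$-bundle $p:N\to\mathbb{T}^2$, I would run the $\varphi/\psi$-equivariant Gysin sequence of $p$. It shows that $p^{*}H^1(\mathbb{T}^2)$ is a $\varphi^{*}$-invariant subspace of $H^1(N)$ on which $\varphi^{*}$ acts as $\psi^{*}$, while the quotient $H^1(N)/p^{*}H^1(\mathbb{T}^2)$ is either $0$ (Heisenberg case, nonzero Euler class) or $\mathbb{R}\cong H^0(\mathbb{T}^2)$, on which $\varphi^{*}$ acts by the fiberwise degree $\pm 1$ (trivial bundle, $N=\mathbb{T}^3$). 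Since $\psi$ is Anosov on $\mathbb{T}^2$, $\psi^{*}$ has two distinct real eigenvalues $\mu,\mu'$ with $|\mu|>1>|\mu'|$, in particular both different from $\pm 1$; as $\lambda\neq 0$ and $e^{\lambda}>0$ force $e^{\lambda}\notin\{\pm 1\}$, and $e^{\lambda}$ is an eigenvalue of $\varphi^{*}$, we get $e^{\lambda}\in\{\mu,\mu'\}$, so again $e^{\lambda}$ is a simple eigenvalue of $\varphi^{*}$. In both cases $\ker([\varphi^{*}-e^{\lambda}\mathrm{Id}]^{2})=\mathbb{R}[\alpha]$, so Cor.~\ref{cor:rigidity-model} gives rigidity of $(\mathcal{G}_{\alpha,\lambda},\omega)$; Cor.~\ref{cor:equiv-rigidity} then transfers the conditions of Cor.~\ref{cor:req-rigidity} to $(\mathcal{G}_{\alpha,\lambda},\zeta)$, whence $(\mathcal{G}_{\alpha,\lambda},\zeta)$ and therefore $(\mathcal{F},\eta)$ is rigid. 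The routine parts -- that the Gysin maps are $\varphi$-equivariant and that a bundle map restricts to a degree-$\pm 1$ map on fibers -- I would dispatch quickly; the only genuinely case-dependent input is pinning down where $e^{\lambda}$ sits among the eigenvalues, which is forced by the Anosov hypothesis.
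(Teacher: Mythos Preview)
Your proposal is correct and follows essentially the same approach as the paper: reduce via Thm.~\ref{thm:matsumoto} and Cor.~\ref{cor:equiv-rigidity} to checking the hypothesis of Cor.~\ref{cor:rigidity-model}, and then verify that $e^{\lambda}$ is an algebraically simple eigenvalue of $[\varphi^{*}]$ on $H^{1}(N)$ by the irreducibility argument in case~(1) and by analyzing the $S^{1}$-bundle structure in case~(2). The only minor difference is that in case~(2) the paper splits into the Heisenberg and trivial-bundle subcases, invoking Nomizu's theorem for the former and writing down an explicit block-diagonal matrix for the latter, whereas you treat both subcases uniformly via the equivariant Gysin sequence; either route yields the same eigenvalue list $\{\mu,\mu'\}$ or $\{\mu,\mu',\pm 1\}$ with all entries distinct.
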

	\begin{proof}
	By Thm.~\ref{thm:matsumoto}, we know that $(M,\mathcal{F})$ is foliated diffeomorphic with a model $(\mathbb{T}_{\varphi},\mathcal{G}_{\alpha,\lambda})$, where the fiber $N$ of $\mathbb{T}_{\varphi}$ is either $N=\mathbb{T}^{3}$ or $N=H_{3}/\Gamma$.
	As in the proof of Cor.~\ref{cor:rigid3}, we only need to check that the Lie affine foliation $(\mathcal{G}_{\alpha,\lambda},\omega)$ satisfies the conditions of Cor.~\ref{cor:rigidity-model}:
	\begin{enumerate}[i)]
		\item The eigenvalue $e^{\lambda}$ of $[\varphi^{*}]:H^{1}(N)\rightarrow H^{1}(N)$ has algebraic multiplicity equal to $1$,
		\item $\bullet\wedge[\alpha]:\ker\left([\varphi^{*}-\mathrm{Id}]:H^{1}(N)\rightarrow H^{1}(N)\right)\rightarrow H^{2}(N)$ is injective.
	\end{enumerate}
	
	\noindent
	\underline{First case}: $N=\mathbb{T}^{3}$.
	
	\vspace{0.1cm}
	The map induced by $\varphi$ in homology is then represented by $A\in GL(3,\mathbb{Z})=\text{Aut}\left(H_{1}(\mathbb{T}^{3},\mathbb{Z})\right)$. It follows that the map $[\varphi^{*}]:H^{1}(\mathbb{T}^{3})\rightarrow H^{1}(\mathbb{T}^{3})$ is represented by the matrix $A^{T}\in GL(3,\mathbb{Z})$. 
	
	Let us first check that condition $i)$ is satisfied. Assume by contradiction that $e^{\lambda}$ would be a multiple root of the characteristic polynomial $\text{char}(A^{T})$. This then implies that $\text{char}(A^{T})$ is reducible over $\mathbb{Q}$, see \cite[Chapter~V, \S~6, Prop.~6.1]{Lang}. Since $\text{char}(A^{T})$ is a polynomial of degree $3$, it follows that $A^{T}$ has a rational eigenvalue $m/n\in\mathbb{Q}$. By the rational root theorem, $m$ would be a divisor of the constant term of $\text{char}(A^{T})$ and $n$ would be a divisor of the leading coefficient of $\text{char}(A^{T})$. It follows that necessarily $m/n=\pm1$. So we would get that the eigenvalues of $A^{T}$ are given by $e^{\lambda},e^{\lambda},\pm1$. Since $\lambda\neq 0$, this is incompatible with the fact that $|\det(A^{T})|=1$. This confirms that the condition $i)$ above holds.
	
	Regarding condition $ii)$, note that the kernel of the map $\bullet\wedge[\alpha]:H^{1}(\mathbb{T}^{3})\rightarrow H^{2}(\mathbb{T}^{3})$ is $\mathbb{R}[\alpha]$. The latter is contained in $\ker\left([\varphi^{*}-e^{\lambda}\mathrm{Id}]:H^{1}(\mathbb{T}^{3})\rightarrow H^{1}(\mathbb{T}^{3})\right)$, which has trivial intersection with $\ker\left([\varphi^{*}-\mathrm{Id}]:H^{1}(\mathbb{T}^{3})\rightarrow H^{1}(\mathbb{T}^{3})\right)$ because $\lambda\neq 0$. So condition $ii)$ holds.
	
	\vspace{0.1cm}
	\noindent
	\underline{Second case}: $N=H_{3}/\Gamma$.
	
	\vspace{0.1cm}
	
	In this case, the manifold $N$ is the total space of an $S^{1}$-bundle $p:N\rightarrow\mathbb{T}^{2}$ and $\varphi\in\text{Diff}(N)$ covers an Anosov diffeomorphism $\overline{\varphi}$ of $\mathbb{T}^{2}$. A result by Nomizu \cite{Nomizu} states that
	\[
	H^{1}(N)\cong H^{1}(\mathfrak{h})=\mathfrak{h}/[\mathfrak{h},\mathfrak{h}],
	\]
	where $\mathfrak{h}$ is the Heisenberg Lie algebra. This shows that $H^{1}(N)$ is two-dimensional, which implies that the pullback $[p^{*}]:H^{1}(\mathbb{T}^{2})\overset{\sim}{\rightarrow} H^{1}(N)$ is an isomorphism. It follows that the map $[\varphi^{*}]:H^{1}(N)\rightarrow H^{1}(N)$ can be identified with $[\overline{\varphi}^{*}]:H^{1}(\mathbb{T}^{2})\rightarrow H^{1}(\mathbb{T}^{2})$. The latter is represented by a hyperbolic matrix $A\in GL(2,\mathbb{Z})$, see \cite[\S2]{Palis}. Since $|\det A|=1$ and $1$ is not an eigenvalue of $A$, it is clear that conditions $i)$ and $ii)$ above are satisfied.
	\end{proof}

	\begin{ex}
	Consider the torus $\mathbb{T}^{3}$ with the diffeomorphism $\varphi_{A}:\mathbb{T}^{3}\rightarrow\mathbb{T}^{3}$ induced by
	\[
	A=\begin{pmatrix}
		2 & 1 & 0 \\ 1 & 1 & 0 \\ 0 & 0 & 1
	\end{pmatrix}\in SL_{3}(\mathbb{Z}).
	\]
	Note that we are in case $(2)$ described before Thm.~\ref{thm:matsumoto}. The eigenvalues of $A$ are given by
	\[
	\lambda_{1}=\frac{3+\sqrt{5}}{2},\hspace{0.5cm}\lambda_{2}=\frac{3-\sqrt{5}}{2},\hspace{0.5cm}\lambda_{3}=1.
	\]
	As in Ex.~\ref{ex:dim3}, we consider the closed, nowhere vanishing one-form $\alpha\in\Omega^{1}(\mathbb{T}^{2})$ given by
	\[
	\alpha:=\left(\frac{1+\sqrt{5}}{2}\right)d\theta_1+d\theta_2.
	\]
	It is straightforward to check that $\varphi_{A}^{*}\alpha=\lambda_1\alpha$. Therefore, the mapping torus $\mathbb{T}_{A}$ supports a model Lie affine foliation $(\mathcal{G}_{\alpha,\ln(\lambda_1)},\omega)$ defined by the one-forms
	\[
	\begin{cases}
		\omega_1:=\lambda_{1}^{-t}\alpha\\
		\omega_2=-\ln(\lambda_1)dt
	\end{cases}.
	\]
	The Lie $\mathfrak{aff}(1)$-foliation $(\mathcal{G}_{\alpha,\ln(\lambda_1)},\omega)$ is rigid. Indeed, the conditions of Cor.~\ref{cor:rigidity-model} are satisfied:
	\begin{enumerate}
		\item The eigenvalue $\lambda_1$ of $[\varphi_{A}^{*}]:H^{1}(\mathbb{T}^{3})\rightarrow H^{1}(\mathbb{T}^{3})$ has algebraic multiplicity $1$,
		\item The map $\bullet\wedge[\alpha]:\ker\left([\varphi^{*}_{A}-\mathrm{Id}]:H^{1}(\mathbb{T}^{3})\rightarrow H^{1}(\mathbb{T}^{3})\right)\rightarrow H^{2}(\mathbb{T}^{3})$ is injective.
	\end{enumerate}
	Condition $(1)$ holds since $A$ has three distinct eigenvalues. The map in condition $(2)$ reads 
	\[
	\bullet\wedge[\alpha]:\mathbb{R}[d\theta_{3}]\rightarrow H^{2}(\mathbb{T}^{2}):c[d\theta_{3}]\mapsto c\left(\frac{1+\sqrt{5}}{2}\right)[d\theta_3\wedge d\theta_1]+c[d\theta_3\wedge d\theta_2],
	\]
	which is clearly injective. This shows that also condition $(2)$ holds, so $(\mathcal{G}_{\alpha,\ln(\lambda_1)},\omega)$ is rigid.
	\end{ex}

	\color{black}
	\subsubsection{Higher dimensional manifolds}
	Lie affine foliations are not necessarily rigid when the underlying compact, connected and orientable manifold $M$ is of dimension $n\geq 5$. We now illustrate this by means of a concrete example.
	Consider the matrix $A\in SL_{4}(\mathbb{Z})$ given by
	\[
	A=\begin{pmatrix}
		2 & 1 & 0 & 0\\
		1 & 1 & 0 & 0\\
		0 & 0 & 2 & 1\\
		0 & 0 & 1 & 1
	\end{pmatrix}.
	\]
	It has two positive, distinct eigenvalues $\lambda,\lambda^{-1}\in\mathbb{R}$ of multiplicity $2$, and four linearly independent eigenvectors of the form $(V_1,0),(0,V_1),(V_2,0),(0,V_2)$ for $V_1,V_2\in\mathbb{R}^{2}$. It follows that there exist two linearly independent, closed, left invariant one-forms $\alpha_1,\alpha_2\in\Omega^{1}(\mathbb{T}^{4})$ whose pullback under the diffeomorphism $\varphi_A$ of the torus $\mathbb{T}^{4}$ satisfies
	\[
	\begin{cases}
		\varphi_{A}^{*}\alpha_1=\lambda\alpha_1\\
		\varphi_{A}^{*}\alpha_2=\lambda\alpha_2
	\end{cases}.
	\]
	On the mapping torus $\mathbb{T}_{A}$,
	we get a model Lie $\mathfrak{aff}(1)$-foliation $(\mathcal{F},\omega)$ given by the one-forms
	\[
	\begin{cases}
		\omega_1:=\lambda^{-t}\alpha_1\\
		\omega_2:=-\ln(\lambda)dt
	\end{cases}.
	\]
	The deformation cohomology of the Lie foliation $(\mathcal{F},\omega)$ is non-zero. Using the isomorphism
	\begin{equation}\label{eq:iso-last}
	H^{1}_{-\ln(\lambda)dt}(\mathbb{T}_{A})\overset{\sim}{\longrightarrow}\ker\left([\varphi_{A}^{*}-\lambda\mathrm{Id}]:H^{1}(\mathbb{T}^{4})\rightarrow H^{1}(\mathbb{T}^{4})\right)
	\end{equation}
	from Cor.~\ref{cor:computed} $ii)$, we get $H^{1}_{-\ln(\lambda)dt}(\mathbb{T}_{A})=\mathbb{R}[\lambda^{-t}\alpha_1]\oplus\mathbb{R}[\lambda^{-t}\alpha_2]$. This prevents the deformation cohomology from vanishing, which requires $H^{1}_{-\ln(\lambda)dt}(\mathbb{T}_{A})=\mathbb{R}[\lambda^{-t}\alpha_1]$ by Cor.~\ref{cor:injective}. 
	
The extra class $[\lambda^{-t}\alpha_2]\in H^{1}_{-\ln(\lambda)dt}(\mathbb{T}_{A})$ does indeed give rise to a non-trivial deformation of the Lie foliation $(\mathcal{F},\omega)$. Consider the path of Lie $\mathfrak{aff}(1)$-foliations $(\mathcal{F}_s,\omega_s)$ defined by 
	\[
	\begin{cases}
		\omega_1(s):=\lambda^{-t}\left(\alpha_1+s\alpha_2\right)\\
		\omega_2(s):=-\ln(\lambda)dt
	\end{cases}.
	\]
	Note that $(\mathcal{F}_0,\omega_0)=(\mathcal{F},\omega)$. In the language of \S\ref{subsec:def-lie}, the deformation $(\mathcal{F}_s,\omega_s)$ is specified by a path $\sigma(s)$ in $\Omega^{1}(\mathbb{T}_{A})\times\Omega^{1}(\mathbb{T}_{A})$ and a path $\psi(s)$ in $\wedge^{2}\mathfrak{aff}(1)^{*}\otimes\mathfrak{aff}(1)$, which read
	\[
	\begin{cases}
		\sigma(s)=\left(s\lambda^{-t}\alpha_2,0\right)\\
		\psi(s)=0
	\end{cases}.
	\]
	If $(\mathcal{F}_s,\omega_s)$ would be a trivial deformation of $(\mathcal{F},\omega)$, then the infinitesimal deformation
	$$
	\big(\dt{\sigma(0)},\dt{\psi}(0)\big)=\left(\lambda^{-t}\alpha_2,0\right)
	$$ would be exact in the deformation complex $\big(\mathcal{A}^{\bullet},D\big)$. By Prop.~\ref{prop:coho}, this would mean that there exist $C\in\mathbb{R}$ and $g\in C^{\infty}(\mathbb{T}_{A})$ such that
	\[
	\lambda^{-t}\alpha_2=C\lambda^{-t}\alpha_1-dg-g\ln(\lambda)dt.
	\]
	In other words, we have that $[\lambda^{-t}\alpha_2]=C[\lambda^{-t}\alpha_1]$ in $H^{1}_{-\ln(\lambda)dt}(\mathbb{T}_{A})$. This is a contradiction. So the path $(\mathcal{F}_{s},\omega_{s})$ is a non-trivial deformation of $(\mathcal{F},\omega)$. In particular, $(\mathcal{F},\omega)$ is not rigid.

\end{document}